\titleformat{\subsection}{\it}{\thesubsection.\enspace}{1.5pt}{}
\titleformat{\subsubsection}{\it}{\thesubsubsection.\enspace}{1.5pt}{}
\newtheorem{theo}{Theorem}[section]
\newtheorem{lemm}[theo]{Lemma}
\newtheorem{rema}{Remark}[section]
\numberwithin{equation}{section}
\def\lm{\le}
\def\p{\partial}
\def\th2{\frac{\theta}{2}}
\def\dive{\mathop{\rm div}\nolimits}
\def\r{\varrho}
\def\L{\Lambda}
\def\nd{n_\delta}
\def\rd{\varrho_\delta}
\def\md{m_\delta}
\begin{document}
\title{The Optimal Decay Rate of Strong Solution for the Compressible Nematic Liquid Crystal Equations with Large Initial Data  \hspace{-4mm}}
\author{Jincheng Gao$^\dag$ \quad Zhengzhen Wei$^\ddag$ \quad Zheng-an Yao$^\sharp$ \\[10pt]
\small {School of Mathematics, Sun Yat-Sen University,}\\
\small {510275, Guangzhou, P. R. China}\\[5pt]
}

\footnotetext{Email: \it $^\dag$gaojc1998@163.com,
\it $^\ddag$weizhzh5@mail2.sysu.edu.cn,
\it $^\sharp$mcsyao@mail.sysu.edu.cn}
\date{}

\maketitle

\begin{abstract}
This paper is devoted to establishing the optimal decay rate of the global large solution to compressible nematic liquid crystal equations when the initial perturbation is large and  belongs to $L^1(\mathbb R^3)\cap H^2(\mathbb R^3)$. More precisely, we show that the first and second order spatial derivatives of large solution $(\rho-1, u, \nabla d)(t)$ converges to zero at the $L^2-$rate $(1+t)^{-\frac54}$ and $L^2-$rate $(1+t)^{-\frac74}$ respectively, which are optimal in the sense that they coincide with the decay rates of solution to the heat equation. Thus, we establish optimal decay rate for the second order derivative of global large solution studied in \cite{gao-wei-yao, he-huang-wang} since the compressible nematic liquid crystal flow becomes the compressible Navier-Stokes equations when the director is a constant vector. It is worth noticing that there is no decay loss for the highest-order spatial derivative of solution although the associated initial perturbation is large. Moreover, we also establish the lower bound of decay rates  of $(\rho-1, u, \nabla d)(t)$ itself and its spatial derivative, which coincide with the upper one. Therefore, the decay rates of global large solution $\nabla^2(\rho-1,u,\nabla d)(t)$ $(k=0,1,2)$ are actually optimal.
\vspace*{5pt}

\noindent{\it {\rm Keywords}}: Compressible nematic liquid crystal equations; optimal decay rate; large initial data.\\
\vspace*{5pt}
\noindent{\it {\rm 2010 Mathematics Subject Classification:}}\ {\rm 35Q35 , 76A15, 35D35}
\end{abstract}


\section{Introduction}
\quad
In this paper, we are concerned with the upper and lower bounds of decay rates
for a class of global large solution to the three dimensional
compressible nematic liquid crystal equations:
\begin{equation}\label{liqu}
  	\left\{\begin{aligned}
  &\partial_t\rho+\dive(\rho u)=0,\\
   &\partial_t(\rho u)+\dive(\rho u\otimes u)+\nabla P=\dive T-\nabla d\cdot\Delta d,\\
   &\partial_td+u\cdot\nabla d=\Delta d+|\nabla d|^2d,
  	\end{aligned}\right.
  \end{equation}
where $(t, x)\in \mathbb{R}^+\times\mathbb{R}^3$.
The unknown functions $\rho, u=(u_1, u_2, u_3)$ and $P$ represent
the density, velocity and pressure respectively. $d(t,x)\in S^2$, the unit sphere in $\mathbb R^3$, represents the macroscopic average of the nematic liquid crystal orientation field. The pressure $P$  is given by a smooth function $P=P(\rho)=\rho^\gamma$ with the adiabatic exponent $\gamma \ge 1$. And $T$ is the stress tensor given by $T=\mu(\nabla u+\nabla^T u)+\lambda(\dive u) \mathbb I_{3\times3}$ with $ \mathbb I_{3\times3}$ the identity matrix. The constants $\mu$ and $\lambda$ are the viscosity coefficients, which satisfy the following conditions:
$\mu>0$, $2\mu+3\lambda \ge0$. To complete system \eqref{liqu}, the initial data is given by
\begin{equation*}
\left.(\rho, u, d)(t, x)\right|_{t=0}=(\rho_0(x), u_0(x), d_0(x)).
\end{equation*}
 As the space variable tends to infinity, there follows
\begin{equation*}
 \underset{|x|\rightarrow\infty}{\lim}(\rho-1, u, d-\underline d)(t, x)=(0,0,0),
\end{equation*}
where $\underline d$ is a unit constant vector in $S^2$. The systems \eqref{liqu}
are a coupling between the compressible Navier-Stokes equations and a heat flow, which is a macroscopic continuum description of the development for the liquid crystal of nematic type.
In the sequence, we will describe some mathematical results related to the
Navier-Stokes and nematic liquid crystal equations.

\textbf{(I)Some results for the incompressible nematic liquid crystal equations.} The hydrodynamic theory of incompressible liquid crystals was first derived by Ericksen and Leslie in the 1960s (see \cite{erick, les}). It simplified to the incompressible nematic liquid crystal equations, which has been successfully studied. For examples, when density is a constant, Lin et al. \cite{lin-lin-wang} obtained the global existence of the weak solutions in any smooth bounded domain in two dimensions. Gong et al. \cite{gong-huang} obtained a strong global solution if the initial orientation field vector lying in a two-dimensional plane. 
The uniqueness of Leray-Hopf type global weak solution was proved by Lin and Wang \cite{linf-wangc}. Later, Li, Titi and Xin \cite{li-ti-xin} extended their results to the general Ericksen-Leslie system. For more related results in $\mathbb R^2$, one can refer to \cite{hong, xu-zhang,hong-xin,lei, lijinkai, li-jin}. In the case of $\mathbb R^3$, Wang \cite{wang} showed a global well-posedness theory under the condition that $\|u_0\|_{BMO^{-1}}+[d_0]_{BMO}\le \epsilon_0$ for some $\epsilon_0>0$. 
Hineman and Wang \cite{hin-wang} obtained the  local well-posedness in the condition that initial data with small $L^3_{uloc}(\mathbb R^3)-$norm. Lin and Wang \cite{lin-wang} obtained the global existence of a weak solution in the case that the initial director field on the unit upper hemisphere. Recently, Gong et.al \cite{gong-huang-li} constructed infinitely many weak solutions for suitable initial and boundary data. For the results of density-dependent incompressible nematic liquid crystal system, one can refer to \cite{gao-tao, ding-wen, gong-li-xu} and references therein.

\textbf{(II)Some results for the compressible nematic liquid crystal equations.} Let us introduce some related mathematical results. In one-dimensional space, Ding et al. \cite{ding1} obtained both global existence and uniqueness of classical solution of  \eqref{liqu} with H{\"o}lder continuous initial data and non-negative initial density.
This result was generalized to the case of fluid with vacuum in \cite{ding2}.
In dimension three, Jiang et al. \cite{jiang-jiang-wang} obtained the
  global existence of weak solution with large initial energy and without any smallness condition on the initial density and velocity in a bounded domain $\Omega\in \mathbb R^N$, ($N = 2, 3$).
The local-in-time well-posedness of strong solution and some blow-up criterions of breakdown of strong solution were studied in \cite{huang-wang-wen1,huang-wang-wen2}.  The local-in-time strong solutions in $\mathbb R^3$ which under stricter regularity assumptions turn out to be classical was obtained in \cite{ma}.
The global existence of classical solution with smooth initial data which are of small energy but possibly large oscillations in $\mathbb R^3$ were established in \cite{li-xu-zhang}.
As a byproduct, they also studied the large-time behavior of the solution.
Recently, Gao et al.\cite{gao-tao-yao} obtained the global well-posedness
of classical solution under the condition of small perturbation of constant equilibrium state
in the $H^N(\mathbb R^3)(N\ge 3)$-framework.
Furthermore, the optimal decay rate of $k-$th$(k\le N-1)$ order spatial derivative
of solution was obtained in \cite{gao-tao-yao} if the initial perturbation data belongs to $L^1$ additionally.
For more results on the compressible nematic liquid crystal flows \eqref{liqu}, we refer to \cite{lin-lai-wang, huang-ding, li-xu-zhang} and references therein.

\textbf{(III)Some decay results for the compressible Navier-Stokes equations.}
The compressible nematic liquid crystal flow \eqref{liqu} becomes the compressible Navier-Stokes equations(CNS) when the director is a  constant vector. There are many interesting works about the long-time behavior of solution to CNS.
First of all, Matsumura and Nishida \cite{mat-ni} obtained the decay rate
of global small classical solution converging to some constant equilibrium state
in the three-dimensional whole space.
Later, Ponce \cite{ponce} established the optimal $L^p$($p\ge 2$) decay rate for small initial perturbation in $H^l\cap L^1$ with $l\ge3$. With the help of the study of Green function, the optimal $L^p$ ($1\le p\le\infty$) decay rate in $\mathbb R^n$, $n\ge 2$, were obtained by Hoff, Zumbrun \cite{hoff-zumbrun} and Liu, Wang \cite{liu-wang} when the small initial perturbation bounded in $H^s\cap L^1$ with the integer $s\ge[n/2]+3$. For the compressible Navier-Stokes system with an external potential force,
the authors obtained the optimal decay rate in \cite{duan1,duan2,ukai}. For more results about decay problem for the  Navier-Stokes equations, one can refer to \cite{li-zhang, tan-wang, wang-tan}.
If the initial perturbation belongs to some negative Sobolev space $\dot H^{-s}$
rather than some Lebesgue space $L^p$, Guo and Wang \cite{guo-wang} built the time decay rate for the solution of CNS by using a general energy method.
For the case of compressible fluid, there are many results about lower bound of decay rate for
the solution itself of the compressible Navier-Stokes equations \cite{ka-ko, li-zhang},
compressible viscoelastic flows \cite{hu-wu},
and compressible Navier-Stokes-Poisson equations \cite{li-mat-zhang, zhang-li-zhu}.
Later, Gao et al.\cite{gao-lyu} studied the lower bound of decay rate for
the higher order spatial derivative of solution to the compressible Navier-Stokes
and Hall-MHD equations in three-dimensional whole space.
Recently, Wang and Wen \cite{wang-wen} established the optimal time-decay rate for strong solution of the full compressible Navier-Stokes equations with reaction diffusion when the initial perturbation is small in $H^2$. Moreover, they developed a new estimate to avoid the decay loss for the highest-order spatial derivatives of the solution.

However, most of above decay results for the compressible nematic liquid crystal equations and the compressible Navier-Stokes equations are established under the condition that the initial data is a small perturbation of constant equilibrium state.  Recently, He, Huang and Wang \cite{he-huang-wang} proved global stability of large solution to the compressible Navier-Stokes equations. Specifically, under the assumption that $\sup_{t\in \mathbb R^+}\|\rho(t, \cdot)\|_{C^\alpha}\le  M$ for some $0<\alpha<1$, they established upper decay rate
 \begin{equation}\label{pp0}
 \|(\rho-1)(t)\|_{H^1}+\|u(t)\|_{H^1}\le C(1+t)^{-\frac34(\frac2p-1)}.
 \end{equation}
Here the initial perturbation $(\rho_0-1,u_0)\in L^p(\mathbb R^3)\cap H^2(\mathbb R^3)$ with $p\in[1,2)$. The decay result \eqref{pp0} indicates that the first order spatial derivative of solution converges to zero at the $L^2-$rate $(1+t)^{-\frac34(\frac2p-1)}$, which seems not optimal. Meanwhile, the decay rate \eqref{pp0} does not establish the decay rate for the second order spatial derivative of solution.
Thus, for the the global solution studied in \cite{he-huang-wang},
our recent article \cite{gao-wei-yao} established the following decay estimate
\begin{equation}\label{decaypp}
\|\nabla (\rho-1)(t)\|_{H^1}
+\|\nabla u(t)\|_{H^1}
\lm C(1+t)^{-\frac{3}{4}(\frac2p-1)-\frac{1}{2}}.
\end{equation}
Compared with \eqref{pp0}, our result not only established optimal decay rate for the solution's first order spatial derivative, but also proved the second order spatial derivative of global solution will converge to zero. However, \eqref{decaypp} shows that the second order spatial derivative of solution converges to zero at the $L^2-$rate $(1+t)^{-\frac34(\frac2p-1)-\frac12}$, which still seems not optimal.
Recently, Chen et al.\cite{chen-huang} generalized the results in \cite{he-huang-wang} to compressible nematic liquid crystal equations \eqref{liqu} and shown that $(\rho-1, u,\nabla d)$ converges to zero at the $H^1-$rate $(1+t)^{-\frac34}$ when the initial perturbation bounded in $L^1(\mathbb R^3)\cap H^2(\mathbb R^3)$.
It is worth nothing that the optimal decay rate of first order spatial derivative of solution
converging to zero in $L^2-$norm can be proved to be $(1+t)^{-\frac54}$ just taking the
method in our article \cite{gao-wei-yao}.
\textit{However, the decay rate of second order spatial derivative of solution
for the compressible Navier-Stokes and nematic liquid crystal equations obtained in \cite{{he-huang-wang},{chen-huang}} is still not optimal.}
The essential reason is that there are not enough dissipative estimates for the density
to control the energy since the compressible Navier-Stokes equations
are the hyperbolic-parabolic system.

\textit{The purpose of this paper is to establish the optimal decay rate for the large solution of compressible nematic liquid crystal equations \eqref{liqu} when the initial perturbation is  bounded in $L^1(\mathbb R^3)\cap H^2(\mathbb R^3)$.}
Our first target is to establish the optimal decay rate of large solution of \eqref{liqu}
for its first and second order derivatives converging to zero.
Here the decay rate of solution is called optimal just in the sense that
it coincides with the rate of heat equation.
The second purpose is to  establish the lower bound of decay rates for the global strong solution itself and its first and second spatial derivatives for the compressible nematic liquid crystal equations \eqref{liqu}. These lower bound of decay rates will coincide with the upper one.
Therefore, these decay rates obtained in this article are actually optimal.
As a byproduct, we also obtain the optimal decay rate for the second order spatial
derivative of large solution studied in \cite{he-huang-wang} for the
compressible Navier-Stokes in three dimensional whole space.

Before stating the main results of this paper, we would like to introduce some notation which will be used throughout this paper.

 \textbf{Notation:} In this paper, we use $H^s(s\in\mathbb{R}^3)$ to denote the usual Sobolev space with norm $\|\cdot\|_{H^s}$ and $L^p(\mathbb{R}^3)$ to denote the usual $L^p$ space with norm $\|\cdot\|_{L^p}$. $\widehat{f}=\mathcal F(f)$ represents the usual Fourier transform of the function $f$.
 For the sake of simplicity, we write $\int f dx:=\int _{\mathbb{R}^3} f dx$
 and $\|(A, B)\|_X:=\| A \|_X+\| B\|_X$.
 The constant $C$ denotes the generic positive constant independent of time, and may change from line to line. Let $\Lambda^s$ be the pseudodifferential operator defined by $\Lambda^s=\mathcal F^{-1}(|\xi|^s\hat f)$, for $s\in \mathbb R$. We note that $\nabla=(\partial_{x_1},\partial_{x_2},\partial_{x_3})$ and for a multi-index $\alpha=(\alpha_1,\alpha_2,\alpha_3)$, $\partial_x^\alpha=\partial_{x_1}^{\alpha_1}\partial_{x_2}^{\alpha_2}\partial_{x_3}^{\alpha_3}$.

First of all, we recall the following results obtained in \cite{chen-huang}, which will be used in this paper frequently.
\begin{theo}\label{th1}
Let $\mu>\frac12\lambda$, and $(\rho, u, d)$ be a global and smooth solution of \eqref{liqu} with $0\le \rho\le M$, and initial data $(\rho_0, u_0, d_0)$ verifying that $\rho_0\ge c>0$ and the admissible condition
\begin{equation*}
  	\left\{\begin{aligned}
	&u_t|_{t=0}=-u_0\cdot\nabla u_0+\frac{1}{\rho_0}(\dive T_0-\nabla d_0\cdot\Delta d_0-\nabla \rho_0^\gamma),\\
	&d_t|_{t=0}=\Delta d_0+|\nabla d_0|^2d_0-u_0\cdot\nabla d_0,
    	\end{aligned}\right.
  \end{equation*}
and $\sup_{t\in \mathbb R^+}\|\nabla d(t, \cdot)\|_{L^\infty}+\sup_{t\in \mathbb R^+}\|\rho(t, \cdot)\|_{C^\alpha}\le \mathcal M$ for some $0<\alpha<1$.Then if $(\rho_0-1, u_0, \nabla d_0)\in L^1(\mathbb R^3)\cap H^2(\mathbb R^3)$, then there exists a constant $\underline \rho=\underline \rho(c,M,\mathcal M)>0$ such that for all $t\ge 0$, we have
\begin{equation}\label{lbrho}
\rho(t,x)\ge \underline \rho.
\end{equation}
We have the uniform-in-time bounds for the regularity of the solutions, assuming that $\r=\rho-1$, $n=d-\underline d$,
\begin{equation}\label{uni}
\begin{aligned}
&\|\r\|_{L^\infty(H^2)}^2+\|u\|_{L^\infty(H^2)}^2+\|\nabla n\|_{L^\infty(H^2)}^2+\int_0^\infty\big(\|\nabla \r(\tau)\|_{H^1}^2+\|\nabla u(\tau)\|_{H^2}^2+\|\nabla^2n(\tau)\|_{H^2}^2\big)d\tau\\&
\le C(\underline \rho,\mathcal M,\|(\r_0,u_0,\nabla n_0)\|_{L^1\cap H^2},\|d_0\|_{L^2}).
\end{aligned}
\end{equation}
Moreover, we have the decay estimate for the solution
\begin{equation}\label{decay}
\|\r(t)\|_{H^1}+\|u(t)\|_{H^1}+\|\nabla n (t)\|_{H^1}\le C(\underline \rho, \mathcal M,\|\r_0\|_{L^1\cap H^1},\|(u_0,\nabla n_0)\|_{L^1\cap H^2},\|n_0\|_{L^2})(1+t)^{-\frac34}.
\end{equation}
\end{theo}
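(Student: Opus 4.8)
Since Theorem~\ref{th1} assembles results proved in \cite{chen-huang} (adapting the compressible Navier--Stokes analysis of \cite{he-huang-wang}), the plan is to explain the three ingredients it combines rather than to reproduce every estimate. The statement has three parts --- the pointwise lower bound \eqref{lbrho}, the uniform-in-time regularity \eqref{uni}, and the $(1+t)^{-3/4}$ decay \eqref{decay} --- and I would establish them in that order, since each step feeds the next.

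To get the lower bound \eqref{lbrho}, I would pass to Lagrangian coordinates. Along a particle path $X(t)$ with $\dot X=u(X,t)$ the continuity equation gives $\tfrac{d}{dt}\log\rho(X(t),t)=-\dive u$, and writing the effective viscous flux $F:=(2\mu+\lambda)\dive u-(P(\rho)-P(1))$ one has $\dive u=\tfrac{1}{2\mu+\lambda}\bigl(F+P(\rho)-P(1)\bigr)$ with $F$ solving $\Delta F=\dive(\rho\dot u+\nabla d\cdot\Delta d)$. The key point would be to bound $\int_0^t\|F(\tau)\|_{L^\infty}\,d\tau$ using the $C^\alpha$ control of $\rho$, the bound on $\|\nabla d\|_{L^\infty}$ and the basic energy estimates (together with elliptic regularity for $F$), after which an ODE comparison lemma of Zlotnik type yields $\log\rho\ge$ const.

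For \eqref{uni} I would run a bootstrap on $E(t):=\|\r\|_{H^2}^2+\|u\|_{H^2}^2+\|\nabla n\|_{H^2}^2$. The energy identity plus the parabolic estimates for $u$ and $n$ control $\|\nabla u\|_{H^2}$ and $\|\nabla^2 n\|_{H^2}$ in $L^2_t$; the missing dissipation on $\nabla\r$ is recovered by the usual device of differentiating the cross term $\int\nabla\r\cdot u\,dx$ and invoking the momentum equation. The nematic nonlinearity $|\nabla d|^2 d$ and the coupling $\nabla d\cdot\Delta d$ would be absorbed using $\|\nabla d\|_{L^\infty}\le\mathcal M$, the constraint $|d|=1$ and Sobolev embedding, while the convection and pressure terms are handled with the bounds $\underline\rho\le\rho\le M$ and the $C^\alpha$ estimate --- these, rather than any smallness, are what make the nonlinear terms tractable. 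Closing this step without a smallness hypothesis is the delicate part.

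Finally, for the decay \eqref{decay} I would combine the Fourier-splitting method of Schonbek with the linearized structure of \eqref{liqu} about $(1,0,\underline d)$, whose low-frequency part behaves like the heat kernel and so produces an $L^1\to L^2$ rate $(1+t)^{-3/4}$. The plan would be: first use \eqref{uni} plus interpolation to show the nonlinear and large-data contributions are time-integrable with room to spare; then bootstrap the spectral inequality $\tfrac{d}{dt}\|(\r,u,\nabla n)\|_{L^2}^2+c\,\|(\nabla\r,\nabla u,\nabla^2 n)\|_{L^2}^2\lesssim(\text{low-frequency mass})$ to obtain the $L^2$ decay of $(\r,u,\nabla n)$; and finally feed this into the higher-order energy identities to upgrade it to the $H^1$ rate. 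The main obstacle throughout is that, the initial perturbation being large, no nonlinear term can be discarded as small; the resolution is to lean on the $C^\alpha$ and $\|\nabla d\|_{L^\infty}$ hypotheses, the lower bound from step one, and the surplus integrability recorded in \eqref{uni}.
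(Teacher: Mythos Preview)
Your proposal is appropriate, and indeed you have correctly identified the essential point: the paper does \emph{not} prove Theorem~\ref{th1} at all. The theorem is stated with the preamble ``we recall the following results obtained in \cite{chen-huang}, which will be used in this paper frequently,'' and no proof is given --- it is quoted verbatim as an input from the literature and then used as a black box (via the bounds \eqref{lbrho}, \eqref{uni}, \eqref{decay}) throughout Sections~2 and~3.

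Your three-step outline (effective viscous flux plus a Zlotnik-type ODE argument for the density lower bound; bootstrapped energy estimates exploiting the $C^\alpha$ and $\|\nabla d\|_{L^\infty}$ hypotheses for the uniform $H^2$ control; Schonbek Fourier-splitting for the $(1+t)^{-3/4}$ rate) is a faithful high-level summary of the strategy in \cite{chen-huang} and its antecedent \cite{he-huang-wang}, so nothing in it is wrong. But since the present paper offers no argument of its own for this statement, there is nothing further to compare your sketch against: the correct ``proof'' here is simply the citation, which you have already supplied in your first sentence.
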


Our first result can be stated as follows:
\begin{theo}\label{th2}
Suppose all the conditions in Theorem \ref{th1} hold on, and let $(\rho, u, d)$ be the global solution of \eqref{liqu} in Theorem \ref{th1}. Then, it holds on for $k=0,1,2$
\begin{equation}\label{decay00}
\|\nabla^k (\rho-1)(t)\|_{L^2}
+\|\nabla^k  u(t)\|_{L^2}
+\|\nabla^{k+1}d(t)\|_{L^2}
\le C(1+t)^{-\frac34-\frac k2},
\end{equation}
when $t\ge T_2$. Here $C$ is a constant independent of time, and $T_2$ is a large constant given in Lemma \ref{lemm28}.
\end{theo}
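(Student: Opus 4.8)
\textit{Proof idea.} The plan is to run a bootstrap that promotes the $H^1$--decay $(1+t)^{-3/4}$ of Theorem \ref{th1} to the optimal rates $(1+t)^{-3/4-k/2}$, $k=0,1,2$, by coupling pure energy estimates with Schonbek's Fourier splitting method. First I would pass to the perturbation unknowns $\varrho=\rho-1$, $n=d-\underline d$, $m=\nabla d=\nabla n$, rewriting \eqref{liqu} as
$$
\partial_t\varrho+\dive u=f_1,\qquad
\partial_t u-\mu\Delta u-(\mu+\lambda)\nabla\dive u+\gamma\nabla\varrho=f_2,\qquad
\partial_t m-\Delta m=f_3,
$$
where $f_1=-\dive(\varrho u)$ and $f_2,f_3$ collect the quadratic-and-higher nonlinearities (convection, the pressure remainder, the coupling $\nabla d\cdot\Delta d$ written as $m\cdot\nabla m$, the factor $\rho^{-1}-1$, and $|\nabla d|^2d$). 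The linear part is the classical hyperbolic--parabolic compressible Navier--Stokes operator on $(\varrho,u)$ coupled to a heat flow for $m$: its low-frequency component decays like the heat semigroup, its high-frequency component is exponentially damped. Throughout I would use the uniform-in-time regularity \eqref{uni}, the time-integrability of the dissipation contained in \eqref{uni}, the base decay \eqref{decay}, and Gagliardo--Nirenberg interpolation to control all nonlinear terms.

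The second ingredient is a family of energy--dissipation inequalities. For $\ell\in\{0,1,2\}$ I would build a Lyapunov functional $\mathcal E_\ell$ equivalent to $\sum_{\ell\le k\le2}\|\nabla^k(\varrho,u,m)(t)\|_{L^2}^2$, by adding to it small multiples of suitable cross terms of the type $\int\nabla^{k}u\cdot\nabla^{k+1}\varrho\,dx$, so as to obtain
$$
\frac{d}{dt}\mathcal E_\ell(t)+c\,\mathcal D_\ell(t)\le \mathcal N_\ell(t),\qquad
\mathcal D_\ell(t)\gtrsim \|\nabla^{\max\{\ell,1\}}\varrho\|_{L^2}^2+\sum_{\ell\le k\le2}\|\nabla^{k+1}(u,m)\|_{L^2}^2 .
$$
The role of the cross terms is to manufacture the missing dissipation of the density: testing $\nabla^{k}$ of the momentum equation against $\nabla^{k+1}\varrho$ makes $\gamma\|\nabla^{k+1}\varrho\|_{L^2}^2$ appear with a favorable sign, while the viscous contribution only costs $\|\nabla^{k+2}u\|_{L^2}^2$, which for $k=0,1$ (that is, up to $\|\nabla^3u\|_{L^2}^2$) is available by \eqref{uni}; the pressure terms in the $\varrho$-- and $u$--estimates cancel exactly when $\varrho$ and $u$ are combined with the correct weights, so no spurious density forcing is produced. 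The nonlinear term $\mathcal N_\ell$ is handled by Gagliardo--Nirenberg together with \eqref{uni} and \eqref{decay}: every piece is either absorbed into $c\,\mathcal D_\ell$ with a small constant (legitimate once $t$ is large, since $\|(\varrho,u,m)\|_{H^1}\to0$ by \eqref{decay}), or, after a Young inequality aimed \emph{at} the dissipation term (for instance bounding $\|\nabla u\|_{L^\infty}\|\nabla^2u\|_{L^2}^2\le\varepsilon\|\nabla^3u\|_{L^2}^2+C_\varepsilon\|\nabla u\|_{L^2}^{10}$ via interpolation), reduces to a product of low-order norms that decays integrably fast.

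I expect the main obstacle to be the top order $\ell=2$, where the density has no $\dot H^2$--dissipation of its own and the naive scheme loses half a power of decay: the cross term that would supply $\|\nabla^3\varrho\|_{L^2}^2$ needs $\|\nabla^4u\|_{L^2}^2$, which is beyond \eqref{uni}, and the cross term that supplies $\|\nabla^2\varrho\|_{L^2}^2$ either contaminates $\mathcal E_2$ with the more slowly decaying lower-order norm $\|\varrho\|_{L^2}^2$ or throws a lower-order velocity term $\|\nabla\dive u\|_{L^2}^2$ onto the right-hand side that cannot be absorbed at the top level; lumping $\dot H^1$ and $\dot H^2$ together only drags the $\dot H^2$--rate down to $(1+t)^{-5/4}$. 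I would resolve this following the device of Wang and Wen \cite{wang-wen} (adapted as in \cite{gao-wei-yao,chen-huang}): a more delicate construction, combining a carefully weighted cross term with an additional estimate exploiting the elliptic structure of the momentum equation (which expresses $\gamma\nabla\varrho$ through $\partial_t u$, $\Delta u$, $\nabla\dive u$ and $f_2$), which recovers the full $\|\nabla^2\varrho\|_{L^2}^2$ dissipation at the top level without importing the $(1+t)^{-5/2}$--rate of the first-order norms. This is the technical heart of the argument.

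Finally, I would turn the energy--dissipation inequality into a decay ODE by Fourier splitting. On the ball $|\xi|\le R(t)$ with $R(t)^2=K/(1+t)$, $K$ large, the elementary bound $\|\nabla^{j+1}v\|_{L^2}^2\ge R(t)^2\|\nabla^{j}v\|_{L^2}^2-C\,R(t)^{2j+5}\sup_{|\xi|\le R(t)}|\widehat v(t,\xi)|^2$ applied to $u,m$ (and to $\varrho$ through the $\|\nabla^2\varrho\|_{L^2}^2$ dissipation) gives $\mathcal D_\ell\gtrsim (1+t)^{-1}\mathcal E_\ell-C(1+t)^{-\ell-5/2}\sup_{|\xi|\le1}|\widehat U(t,\xi)|^2$ for $U=(\varrho,u,m)$. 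The low-frequency factor is controlled by Duhamel's formula $U(t)=e^{t\mathcal L}U_0+\int_0^t e^{(t-s)\mathcal L}F(s)\,ds$: since $U_0\in L^1$ and, after using the divergence structure of $F$ together with \eqref{decay}, $\|F(s)\|_{L^1}\lesssim\|U(s)\|_{L^2}\|\nabla U(s)\|_{L^2}\lesssim(1+s)^{-3/2}$ is time-integrable, the linear semigroup estimates yield $\sup_{|\xi|\le1}|\widehat U(t,\xi)|\le C$ uniformly in $t$. Hence $\frac{d}{dt}\mathcal E_\ell+\frac{c}{1+t}\mathcal E_\ell\le C(1+t)^{-\ell-5/2}+\mathcal N_\ell(t)$, and a Gronwall argument (first fixing $K$, then the exponent in the time weight, large enough) gives $\mathcal E_\ell(t)\lesssim(1+t)^{-\ell-3/2}$, i.e.\ $\|\nabla^\ell(\varrho,u,\nabla n)(t)\|_{L^2}\lesssim(1+t)^{-3/4-\ell/2}$. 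Because estimating $\mathcal N_\ell$ at one level requires the decay already proved at the other levels, the three rates must be closed simultaneously by a continuity argument; this is exactly where the threshold time $T_2$ of Lemma \ref{lemm28} enters, since the argument closes only once $t$ is large enough that \eqref{decay} has rendered the solution suitably small. Recalling $\nabla^{k+1}d=\nabla^k m$, this yields \eqref{decay00} for $t\ge T_2$.
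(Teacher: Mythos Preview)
Your proposal correctly locates the obstacle at $\ell=2$ and cites Wang--Wen, but your description of their device is inaccurate, and your concluding Fourier-splitting step would fail precisely at that level. The Wang--Wen mechanism is \emph{not} about the elliptic structure of the momentum equation---that is already what the cross term $\int\nabla u\cdot\nabla^2\varrho\,dx$ encodes (Lemma~\ref{2rho}). The actual trick is a \emph{frequency decomposition of the cross term itself}: the paper subtracts $\delta\int\nabla u\cdot\nabla^2\varrho^L\,dx$ from the Lyapunov functional $X_h(t)$, leaving only the high-frequency piece $\delta\int\nabla u\cdot\nabla^2\varrho^h\,dx$, which is harmlessly controlled by $\|\nabla^2(\varrho,u)\|_{L^2}^2$. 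The dangerous term $\delta C_2\|\nabla^2 u\|_{L^2}^2$ on the right of \eqref{e1} is then split as $\|\nabla^2 u^L\|_{L^2}^2+\|\nabla^2 u^h\|_{L^2}^2$; the high-frequency part is absorbed via $\|\nabla^3 u\|_{L^2}^2\ge R_0^2\|\nabla^2 u^h\|_{L^2}^2$ with the \emph{fixed} (large) cutoff $R_0$, and the low--medium part is moved to the forcing. This produces the genuinely \emph{exponential} differential inequality $\frac{d}{dt}Y+C_4 Y\le C\|\nabla^2(\varrho^L,u^L,\nabla n^L)\|_{L^2}^2$ of Lemma~\ref{lemma1}, not a Schonbek-type $(1+t)^{-1}$ ODE.

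Your Schonbek splitting with $R(t)^2=K/(1+t)$ cannot close at $\ell=2$: the dissipation then yields only $cK(1+t)^{-1}\|\nabla^2(u,m)\|_{L^2}^2$ on the left, while the cross-term estimate throws an order-one term $\delta C_2\|\nabla^2(u,m)\|_{L^2}^2$ onto the right; for large $t$ the latter dominates, and no choice of $K$ rescues this. The paper instead estimates $\|\nabla^k\mathbb U^L\|_{L^2}$ directly by Duhamel and the linear semigroup (Lemma~\ref{low-me}), inserts the $k=2$ case into the exponential inequality of Lemma~\ref{lemma1}, controls $\|\nabla^k\mathbb U^h\|_{L^2}$ for $k=0,1$ by $\|\nabla^2\mathbb U\|_{L^2}$, and closes a single bootstrap on $M(t)=\sup_{\tau\le t}\sum_{l=0}^2(1+\tau)^{3/4+l/2}\|\nabla^l(\varrho,u,\nabla n)(\tau)\|_{L^2}$ (Lemma~\ref{lemm28}). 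Your low-frequency ingredient $\sup_{|\xi|\le 1}|\widehat U(t,\xi)|\le C$ is in the right spirit but is weaker than the $L^2$ decay of $\nabla^k\mathbb U^L$ actually needed.
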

\begin{rema}
Compared with decay rate \eqref{decay}, our decay result \eqref{decay00} not only implies that the second order spatial derivative of solution $(\rho, u, \nabla d)$ converges to zero, but also shows that the decay rates for the
first and second order spatial derivatives of solution are optimal in the sense that they coincide with the decay rates of solution to the heat equation.
Specially, as the vector $d(t, x)$ is a constant vector field,
our result also implies the optimal decay rate of
the second order spatial derivative of some class large solution $(\rho, u)$
(see \cite{{he-huang-wang},{gao-wei-yao}})of compressible Navier-Stokes equation is $(1+t)^{-\frac74}$.
\end{rema}

\begin{rema}
By the Sobolev interpolation inequality, it is shown that
the solution $(\rho, u, \nabla d)$ converges to the constant equilibrium
state $(1, 0, 0)$ at the $L^\infty-$rate $(1+t)^{-\frac{3}{2}}$.
\end{rema}

The secod result can be stated as follows:
\begin{theo}\label{th3}
Suppose all the assumption of Theorem \ref{th1} hold on. Denote $m_0:=\rho_0u_0$, $w_0:=\Lambda n_0$, assume that the Fourier transform $\mathcal F(\rho_0,m_0,w_0)=(\widehat{\r_0},\widehat{m_0},\widehat{w_0})$, satisfies $|\widehat{\r_0}|\ge c_0$, $\widehat{m_0}=0$, $|\widehat{w_0}|\ge c_0$, $0< |\xi|\ll1$ with $c_0>0$ a constant. Then the global solution $(\varrho,u,n)$ obtained in Theorem \ref{th1} has the decay rates for large time $t$
\begin{equation*}
c_1(1+t)^{-\frac34-\frac k2}\le \|\nabla^k u(t)\|_{L^2}
\le c_2(1+t)^{-\frac34-\frac k2}, \quad \text{for} \quad k=0,1,2
\end{equation*}
\begin{equation*}
c_1(1+t)^{-\frac34-\frac k2}\le  \|\nabla^k\r(t)\|_{L^2}(t)
\le c_2(1+t)^{-\frac34-\frac k2}, \quad \text{for} \quad k=0,1,2
\end{equation*}
\begin{equation*}
c_3(1+t)^{-\frac34-\frac k2}\le \|\nabla^{k+1}n(t)\|_{L^2}\le c_4(1+t)^{-\frac34-\frac k2}, \quad \text{for} \quad k=0,1,2.
\end{equation*}
Here $c_i$, $(i=1,2,3,4)$ are constants independent of time.
\end{theo}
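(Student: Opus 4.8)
Since the matching upper bounds are precisely \eqref{decay00} from Theorem \ref{th2}, the whole task is to prove the three lower bounds; the plan is to establish the cases $k=0,1$ directly and then to deduce $k=2$ by interpolation. First I would recast the system in the conservation variables $\r=\rho-1$, $n=d-\underline d$, $m=\rho u$, $w=\L n$, which satisfy
\[
\p_t\r+\dive m=0,\qquad \p_t m+\gamma\nabla\r-\mu\Delta m-(\mu+\lambda)\nabla\dive m=\mathcal N_m,\qquad \p_t w-\Delta w=\mathcal N_w,
\]
the crucial structural point being that each nonlinear term carries a spatial derivative in front: $\mathcal N_m=\dive F_1+(\text{second-order derivatives of }F_2)$ with $F_1\sim\rho u\otimes u+\nabla d\odot\nabla d+(P(\rho)-P(1)-\gamma\r)\mathbb I$, $F_2\sim\frac{\r}{1+\r}m$, and $\mathcal N_w=\L F_3$ with $F_3\sim u\cdot\nabla n+|\nabla n|^2(n+\underline d)$, all of them at least quadratic in the perturbation. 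Writing $e^{t\mathcal L}$ for the semigroup of the linearization, Duhamel's formula gives $(\r,m,w)=(\r,m,w)_L+(\r,m,w)_{NL}$ with $(\r,m,w)_L=e^{t\mathcal L}(\r_0,m_0,w_0)$.

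The core step is the linear lower bound. A routine low-frequency analysis of $\mathcal L$ gives, for $0<|\xi|\ll1$, eigenvalues $\lambda_\pm(\xi)=-\tfrac{2\mu+\lambda}{2}|\xi|^2\pm i\sqrt\gamma|\xi|+O(|\xi|^3)$ for the curl-free component of $(\r,m)$, decay $e^{-\mu|\xi|^2t}$ for the divergence-free component of $m$, and $\widehat{w_L}=e^{-|\xi|^2t}\widehat{w_0}$. Inserting the hypotheses $\widehat{m_0}=0$, $|\widehat{\r_0}|\ge c_0$, $|\widehat{w_0}|\ge c_0$ on $0<|\xi|\ll1$ — here $\widehat{m_0}=0$ is exactly what forces the low-frequency velocity to be driven by the density through the acoustic coupling — one obtains, for small $|\xi|$ and large $t$,
\[
|\widehat{\r_L}(\xi,t)|\gtrsim e^{-C|\xi|^2t}|\cos(\sqrt\gamma|\xi|t)|,\quad |\widehat{m_L}(\xi,t)|\gtrsim e^{-C|\xi|^2t}|\sin(\sqrt\gamma|\xi|t)|,\quad |\widehat{w_L}(\xi,t)|\gtrsim e^{-|\xi|^2t}.
\]
Then Plancherel's theorem, the scaling $\eta=\sqrt t\,\xi$, and the elementary fact that $\int_{\R^3}|\eta|^{2k}e^{-c|\eta|^2}\sin^2(\sqrt\gamma|\eta|\sqrt t)\,d\eta$ and its $\cos^2$ analogue both tend to $\tfrac12\int_{\R^3}|\eta|^{2k}e^{-c|\eta|^2}d\eta>0$ as $t\to\infty$ (Riemann–Lebesgue applied to the oscillatory remainder) yield $\|\nabla^k\r_L(t)\|_{L^2}+\|\nabla^k m_L(t)\|_{L^2}+\|\nabla^k w_L(t)\|_{L^2}\ge c(1+t)^{-\frac34-\frac k2}$ for $k=0,1$ and $t$ large.

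Next I would show that $(\r,m,w)_{NL}$ decays strictly faster for $k=0,1$. From Theorem \ref{th2}, $\|\nabla^j(\r,u,\nabla d)(t)\|_{L^2}\lm C(1+t)^{-\frac34-\frac j2}$ ($j=0,1,2$), hence $\|F_i(s)\|_{L^1}\lm C(1+s)^{-\frac32}$ by H\"older (using also $\|n\|_{L^\infty}\le2$ and $\underline\rho\le\rho\le M$), and the derivative in front of each nonlinear term produces, at low frequency, $|\widehat{e^{(t-s)\mathcal L}\mathcal N(s)}(\xi)|\lm C|\xi|\,e^{-c|\xi|^2(t-s)}(1+s)^{-\frac32}$, while the high-frequency part of $e^{t\mathcal L}$ decays exponentially. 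Feeding this, together with the uniform bounds \eqref{uni}, into the Duhamel integral split at $s=t/2$ gives $\|\nabla^k(\r,m,w)_{NL}(t)\|_{L^2}\lm C[(1+t)^{-\frac54-\frac k2}+(1+t)^{-\frac32}]=o\big((1+t)^{-\frac34-\frac k2}\big)$ for $k=0,1$. Adding this to the linear lower bound, $\|\nabla^k\r(t)\|_{L^2},\ \|\nabla^k m(t)\|_{L^2},\ \|\nabla^k w(t)\|_{L^2}\ge\tfrac c2(1+t)^{-\frac34-\frac k2}$ for $t\ge T_*$; then $\underline\rho\le\rho\le M$ and the identities $u=m/\rho$, $\nabla u=\nabla m/\rho-m\nabla\rho/\rho^2$, together with the faster decay of $\|m\|_{L^\infty}$ and $\|\nabla\r\|_{L^2}$, transfer the bound for $m$ to $u$, and $w=\L n$ gives $\|\nabla^{k+1}n\|_{L^2}=\|\nabla^k w\|_{L^2}$; this settles $k=0,1$.

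Finally, $k=2$ needs no further spectral input: the Gagliardo–Nirenberg interpolation $\|\nabla u(t)\|_{L^2}^2\le\|u(t)\|_{L^2}\|\nabla^2 u(t)\|_{L^2}$ together with the $k=1$ lower bound just proved and the $k=0$ upper bound from Theorem \ref{th2} gives $\|\nabla^2 u(t)\|_{L^2}\ge\|\nabla u(t)\|_{L^2}^2/\|u(t)\|_{L^2}\gtrsim(1+t)^{-\frac52}/(1+t)^{-\frac34}=c'(1+t)^{-\frac74}$, and the same interpolation applied to $\r$ and to $\nabla d$ yields $\|\nabla^2\r(t)\|_{L^2},\|\nabla^3 n(t)\|_{L^2}\gtrsim(1+t)^{-\frac74}$, which is the case $k=2$. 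I expect the third step to be the main obstacle: one must squeeze enough decay out of the nonlinear Duhamel term, and it is indispensable that every nonlinear term carries a derivative in front (which dictates the use of the conservation form) — without it the recent-time contribution $s\in[t/2,t]$ would only be $O((1+t)^{-\frac34-\frac k2})$, i.e. not negligible against the linear lower bound. The same obstruction, aggravated by the fact that only $H^2$ regularity is available for large data, is exactly why $k=2$ cannot be handled directly and the interpolation step is needed; a secondary technical point is the oscillation averaging in the second step, where the error terms in the eigenvalue expansion must be controlled after the rescaling $\eta=\sqrt t\,\xi$.
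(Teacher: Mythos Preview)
Your overall architecture --- establish the linear lower bound, show the Duhamel correction decays strictly faster for $k=0,1$, then get $k=2$ by interpolation --- matches the paper, and the details you sketch are correct. The differences are in execution. For $(\r,u)$ the paper proceeds exactly as you do: it passes to the momentum $m=\rho u$, cites the linear lower bound from the literature (rather than deriving it via your explicit eigenvalue expansion and oscillation averaging), estimates only the low-frequency part of the Duhamel correction, and then recovers $u$ from $m$ by writing $u=m-\tfrac{\r m}{\rho}$ and bounding the low-frequency projection of the correction via Bernstein plus Hausdorff--Young; your identity $\nabla u=\tfrac{\nabla m}{\rho}-\tfrac{m\nabla\rho}{\rho^2}$ is an equally valid alternative. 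For the director, however, the paper takes a different route: instead of treating $w=\L n$ by Duhamel at $k=0,1$ and interpolating for $k=2$, it works with $\nabla n_\delta=\nabla n-\nabla\bar n$, obtains $k=0$ by Duhamel, but then handles $k=1$ and $k=2$ by an energy inequality combined with a Schonbek-type Fourier-splitting argument (splitting $\R^3_\xi$ into $\{|\xi|\le(R/(1+t))^{1/2}\}$ and its complement), which yields $\|\nabla^{2}n_\delta\|_{L^2}^2\lm(1+t)^{-13/4}$ and $\|\nabla^{3}n_\delta\|_{L^2}^2\lm(1+t)^{-17/4}$ directly, without interpolation. Your uniform Duhamel-plus-interpolation treatment is a legitimate streamlining that avoids the Fourier-splitting machinery; the paper's approach, on the other hand, gives the $k=2$ case for $\nabla^3 n$ without appealing to the upper bound from Theorem~\ref{th2}, at the cost of one more energy estimate.
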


\begin{rema}
Theorems \ref{th2} and \ref{th3} shows that the lower bound of decay rates for $\nabla^k(\rho-1,u,\nabla d)(t)$ $(k=0,1,2)$ coincide with the upper one, which means these decay rates are actually optimal.
\end{rema}

Next, we would like to introduce the main idea for the proof of Theorem \ref{th2}. First, we shall establish the second order  spatial derivative of solution for system \eqref{liqu} with large initial data. Since we have known that
$\|(\r,u,\nabla n)(t)\|_{H^1}\le C(1+t)^{-\frac34}$, $t\ge 0$. Then these quantities can be small enough after a long time. Therefore, as the strategy mentioned in \cite{mat-ni} for flow with small initial data, we established the second order energy estimate:
\begin{equation}\label{xx1}
\begin{aligned}
&\frac{d}{dt}\|\nabla^2(\r,u,\nabla n)(t)\|_{L^2}^2+\mu\|\nabla^3u\|_{L^2}^2+(\mu+\lambda)\|\nabla^2\dive u\|_{L^2}^2+\|\nabla^4 n\|_{L^2}^2\\&
\lesssim Q_1(t)(\|\nabla^2\r\|_{L^2}^2+\|\nabla^2u\|_{H^1}^2+\|\nabla^3n\|_{H^1}^2),
\end{aligned}
\end{equation}
where $Q_1(t)$ consists of some difficult terms, such as $\|(\r, n)\|_{L^\infty}$, $\|\nabla (\r,u,\nabla n)\|_{L^3}$. According to Sobolev interpolation inequality, these terms should be controlled by the product of $\|(\r,u,\nabla n)(t)\|_{L^2}$ and $\|\nabla^2(\r, u,\nabla n)\|_{L^2}$. Therefore, $Q_1(t)$ is a small quantity after a long time according to the decay result \eqref{decay} and uniform result \eqref{uni}. In order to get the dissipative estimate for $\nabla^2\r$, we establish the following estimate
\begin{equation}\label{xx2}
\frac{d}{dt}\int\nabla u\cdot\nabla^2\r dx+C\|\nabla^2\r\|_{L^2}^2
\lesssim \|\nabla^2u(t)\|_{H^1}^2+Q_2(t)(\|\nabla^2\r\|_{L^2}^2+\|\nabla^2u\|_{H^1}^2+\|\nabla^3n\|_{H^1}^2),
\end{equation}
where $Q_2(t)$ is similar as $Q_1(t)$. The combination of \eqref{xx1} and \eqref{xx2} implies that
\begin{equation}\label{xxx3}
\begin{aligned}
&\frac{d}{dt}X_h(t)+\frac12(\mu\|\nabla^3u\|_{L^2}+(\mu+\lambda)\|\nabla^2\dive u\|_{L^2}^2+\|\nabla^4n\|_{L^2}^2)+C\delta\|\nabla^2\r\|_{L^2}^2\\&
\le C_2\delta(\|\nabla^2u\|_{L^2}^2+\|\nabla^3n\|_{L^2}^2), \quad \text{for}\quad t\ge T_1,
\end{aligned}
\end{equation}
where
\begin{equation*}
X_h(t):=\frac12\big(\|\nabla^2\r\|_{L^2}^2+\|\nabla^2u\|_{L^2}^2+\|\nabla^3n\|_{L^2}^2+2\delta\int\nabla u\cdot\nabla^2\r dx\big),
\end{equation*}
 $\delta$ is a small constant and $T_1$ is a large constant. In the following, we focus our attention on how to obtain the optimal decay rate of the second order spatial derivative of large solution for system \eqref{liqu}.
Actually, the presence of the term $\int\nabla u\cdot\nabla^2\r dx$ is the reason that the decay rate of  the second order derivative of solution, i.e. $\|\nabla^2(\r,u,\nabla n)\|_{L^2}$ is not optimal. Indeed, $X_h(t)\sim \|(\nabla\r,\nabla u)\|_{H^1}^2+\|\nabla^3n\|_{L^2}^2$ since the smallness of $\delta$. Thus the second order and the first order spatial derivatives of the solution have the same time decay rate. In order to avoid this obstacle, by using a method as in \cite{wang-wen}, we need to establish a new estimate for $\int\nabla u\cdot\nabla^2\r^L dx$, where $\r^L$ stand for the low-medium-frequency part of $\r$ (see the definition in Appendix \ref{appendixB}). Then removing this term from \eqref{xxx3}, using the good properties of the low-frequency and high-frequency decomposition, we deduce that
\begin{equation*}
\frac{d}{dt}\big(X_h(t)-\delta\int\nabla u\cdot\nabla^2\r^Ldx\big)+C_4\big(X_h(t)-\delta\int\nabla u\cdot\nabla^2\r^Ldx\big)\le C\|\nabla^2(\r^L,u^L,\nabla n^L)\|_{L^2}^2, \quad t\ge T_1.
\end{equation*}
By noting that
\begin{equation*}
X_h(t)-\delta\int\nabla u\cdot\nabla^2\r^Ldx=\frac12\|\nabla^2(\r,u,\nabla n)\|_{L^2}^2+\delta\int\nabla u\cdot\nabla^2\r^hdx\sim\|\nabla^2(\r,u,\nabla n)\|_{L^2}^2,
\end{equation*}
it is easy to obtain that
\begin{equation*}
\begin{aligned}
\|\nabla^2(\r,u,\nabla n)(t)\|_{L^2}^2&\le Ce^{-Ct}\|\nabla^2(\r,u,\nabla n)(T_1)\|_{L^2}^2
+C\int_{T_1}^te^{-C(t-\tau)}\|\nabla^2(\r^L,u^L,\nabla n^L)(\tau)\|_{L^2}^2d\tau.
\end{aligned}
\end{equation*}
 The Duhamel's principle allow us get fast enough decay rate of $\|\nabla^2(\r^L,u^L,\nabla n^L)(\tau)\|_{L^2}^2$, which make it possible to prove the optimal decay rate of $\|\nabla^2(\r,u,\nabla n)\|_{L^2}^2$.

The rest of this paper is organized as follows. In Section 2, we will give the proof of Theorem \ref{th2}. In Section 3, we establish the lower bound decay of strong solution to \eqref{liqu} with large initial data, which completes the proof of Theorem \ref{th3}. In Appendix \ref{appendixA}, we give the decay estimates of the low-medium-frequency part for the linearized system. In Appendix \ref{appendixB}, we give the definition of the frequency decomposition and some known inequalities.

\section{Proof of Theorem \ref{th2}}

\quad
In this section, we will give the proof for the Theorem \ref{th2}.
The analysis proceeds in several steps, which we will give now in detail below.

\subsection{Energy estimate}

\quad
Since the solution itself and its first order spatial derivatives admit the same
$L^2-$rate $(1+t)^{-\frac34}$, these quantities can be small enough essentially
if the time is large.
Thus, we will take the strategy of the frame of small initial data(cf. \cite{mat-ni})
to establish the energy estimate.
Denoting $\varrho:=\rho-1$, $n:=d-\underline d$, we rewrite \eqref{liqu} in the perturbation form as follows
\begin{equation}\label{liqu1}
  	\left\{\begin{aligned}
	&\p_t\r+\dive u=S_1,\\
	&\p_t u-\mu\Delta u-(\mu+\lambda)\nabla\dive u+P'(1)\nabla\r=S_2,\\
	&\p_t n-\Delta n=S_3,
    	\end{aligned}\right.
  \end{equation}
  where the nonlinear terms $S_1$, $S_2$ and $S_3$ are defined by
  \begin{equation*}
  	\left\{\begin{aligned}
	&S_1:=-\r\dive u-u\cdot\nabla\r,\\
	&S_2:=-u\cdot\nabla u-h(\r)\big(\mu\Delta u+(\mu+\lambda)\nabla\dive u\big)-f(\r)\nabla\r-g(\r)\nabla d\cdot\Delta d,\\
	&S_3:=-u\cdot\nabla n+|\nabla n|^2(n+\underline d),
    	\end{aligned}\right.
  \end{equation*}
where
\begin{equation*}
h(\r):=\frac{\r}{\r+1}, \quad f(\r):=\frac{P'(\r+1)}{\r+1}-\frac{P'(1)}{1}, \quad g(\r):=\frac{1}{\r+1}.
\end{equation*}

First, we give the first order spatial derivative estimate as follows.

\begin{lemm}
Under the assumptions of Theorem \ref{th1}, the global solution $(\r, u, n)$ of Cauchy problem \eqref{liqu1} has the estimate
\begin{equation*}
\begin{aligned}
&\frac12\frac{d}{dt}\int(|\nabla\r|^2+|\nabla u|^2+|\nabla^2 n|^2)dx+\int(\mu|\nabla^2 u|^2+(\mu+\lambda)|\nabla\dive u|^2+|\nabla^3n|^2)dx\\&
\le C\big(\|\r\|_{H^1}+\|u\|_{H^1}+\|\r\|_{L^2}^{\frac14}+\|\nabla n\|_{H^1}+\|\nabla n\|_{H^1}^2\big)\big(\|\nabla^2u\|_{L^2}^2+\|\nabla^2\r\|_{L^2}^2+\|\nabla^3n\|_{L^2}^2+\|\nabla\dive u\|_{L^2}^2\big).
\end{aligned}
\end{equation*}
\end{lemm}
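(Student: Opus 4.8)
The plan is to run the standard higher–order energy method for the perturbed system \eqref{liqu1}, but at the derivative level $(\nabla\r,\nabla u,\nabla^2 n)$ and keeping track of the fact that every nonlinear contribution must be accompanied by a prefactor that tends to zero as $t\to\infty$. First I would apply $\nabla$ to the $\r$– and $u$–equations and $\nabla^2$ to the $n$–equation, and take $L^2$ inner products with $\nabla\r$, $\nabla u$ and $\nabla^2 n$ respectively. Integration by parts turns the viscous terms into $\mu\|\nabla^2 u\|_{L^2}^2+(\mu+\lambda)\|\nabla\dive u\|_{L^2}^2$ and the diffusion in the director equation into $\|\nabla^3 n\|_{L^2}^2$. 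The two linear coupling terms — $\int\nabla\dive u\cdot\nabla\r\,dx$ from the continuity equation and $P'(1)\int\nabla^2\r:\nabla u\,dx=-P'(1)\int\nabla\r\cdot\nabla\dive u\,dx$ from the pressure — cancel once the continuity identity is multiplied by the positive constant $P'(1)$, which is then absorbed into the energy. One is thus left with an identity whose left side is the left side of the Lemma and whose right side is $P'(1)\int\nabla S_1\cdot\nabla\r\,dx+\int\nabla S_2\cdot\nabla u\,dx+\int\nabla^2 S_3:\nabla^2 n\,dx$, so the whole proof reduces to bounding these three integrals by $C(\textrm{small})\big(\|\nabla^2 u\|_{L^2}^2+\|\nabla^2\r\|_{L^2}^2+\|\nabla^3 n\|_{L^2}^2+\|\nabla\dive u\|_{L^2}^2\big)$, where the uniform bounds \eqref{uni} (giving $\|\r\|_{H^2}+\|u\|_{H^2}+\|\nabla n\|_{H^2}\le C$) will be used to dispose of any ``non-decaying'' factor produced along the way.

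For the genuinely convective pieces — $-u\cdot\nabla\r$ in $S_1$, $-u\cdot\nabla u$ in $S_2$, $-u\cdot\nabla n$ in $S_3$ — I would integrate by parts in the top-order term, e.g.\ $-\int[(u\cdot\nabla)\nabla\r]\cdot\nabla\r\,dx=\tfrac12\int(\dive u)|\nabla\r|^2\,dx$, and estimate it together with the remaining quadratic/cubic pieces ($\int\r\,\nabla\dive u\cdot\nabla\r$, $\int(\nabla u\cdot\nabla\r)\cdot\nabla\r$, $\int\nabla^2 u\cdot\nabla n\,\nabla^2 n$, $\int\nabla u\cdot\nabla^2 n\,\nabla^2 n$, etc.) by Hölder together with the three–dimensional interpolations $\|\nabla f\|_{L^3}\lesssim\|\nabla f\|_{L^2}^{1/2}\|\nabla^2 f\|_{L^2}^{1/2}$, $\|f\|_{L^6}\lesssim\|\nabla f\|_{L^2}$, $\|\nabla f\|_{L^6}\lesssim\|\nabla^2 f\|_{L^2}$, and then Young's inequality: one factor is kept at top order (to be absorbed by the quadratic right side) while the rest assemble into a prefactor of the form $\|\r\|_{H^1}$, $\|u\|_{H^1}$ or $\|\nabla n\|_{H^1}$. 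The polynomial term $|\nabla n|^2(n+\underline d)$ in $S_3$, after $\nabla^2$, produces expressions quadratic and cubic in $\nabla n,\nabla^2 n,\nabla^3 n$ times the bounded factor $n+\underline d$ (recall $d\in S^2$); distributing derivatives and interpolating $\|\nabla n\|_{L^\infty}\lesssim\|\nabla n\|_{L^2}^{1/2}\|\nabla^2 n\|_{L^2}^{1/2}$ and $\|\nabla^2 n\|_{L^3}\lesssim\|\nabla^2 n\|_{L^2}^{1/2}\|\nabla^3 n\|_{L^2}^{1/2}$ yields exactly the prefactors $\|\nabla n\|_{H^1}$ and $\|\nabla n\|_{H^1}^2$ in front of $\|\nabla^3 n\|_{L^2}^2$.

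The hard part will be the contributions of $-h(\r)\big(\mu\Delta u+(\mu+\lambda)\nabla\dive u\big)$ and $-f(\r)\nabla\r$ in $S_2$, because differentiating them creates $\nabla^3 u$ and $\nabla^2\r$, of order equal to or higher than the available dissipation, and — unlike in small-data problems — the density carries no dissipative norm at this stage. The remedy is to exploit $h(0)=f(0)=0$, so that $|h(\r)|+|f(\r)|+|h'(\r)|\lesssim 1$ on the range $[\underline\rho-1,M-1]$ of $\r$ ensured by \eqref{lbrho}, $|f(\r)|,|h(\r)|\lesssim|\r|$, and to integrate by parts once more to move a derivative off the viscous term onto $\nabla u$, e.g.
\[
\int\nabla\big(h(\r)\Delta u\big):\nabla u\,dx=\int h'(\r)\,\partial_k\r\,\Delta u_j\,\partial_k u_j\,dx-\int h(\r)|\nabla^2 u|^2\,dx-\int h'(\r)\,\partial_i\r\,\partial_i\partial_k u_j\,\partial_k u_j\,dx,
\]
and similarly for $-f(\r)\nabla\r$ (note the cancellation against the continuity equation is unavailable there because $f$ is nonlinear, so one simply keeps $|f(\r)|\lesssim|\r|$). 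Estimating these pieces by Hölder with $\|\nabla u\|_{L^6}\lesssim\|\nabla^2 u\|_{L^2}$, $\|\nabla\r\|_{L^3}\lesssim\|\nabla\r\|_{L^2}^{1/2}\|\nabla^2\r\|_{L^2}^{1/2}$ and, crucially, the sharp interpolation $\|\r\|_{L^\infty}\lesssim\|\r\|_{L^2}^{1/4}\|\nabla^2\r\|_{L^2}^{3/4}$ in $\mathbb R^3$ — after which the non-decaying factor $\|\nabla^2\r\|_{L^2}^{3/4}$ is bounded by a constant via \eqref{uni} — followed by Young's inequality, produces precisely the prefactors $\|\r\|_{H^1}$ and $\|\r\|_{L^2}^{1/4}$ multiplying $\|\nabla^2\r\|_{L^2}^2+\|\nabla^2 u\|_{L^2}^2+\|\nabla\dive u\|_{L^2}^2$; this $L^2$–$L^\infty$ interpolation of $\r$ is the origin of the unusual exponent $\tfrac14$ in the statement. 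The remaining coupling term $-g(\r)\nabla d\cdot\Delta d=-g(\r)\nabla n\cdot\Delta n$, with $g$ smooth and bounded because $\r+1=\rho\ge\underline\rho>0$, is handled like the cubic director terms above and contributes a $\|\nabla n\|_{H^1}$–type prefactor. Collecting the three bounds and discarding the harmless weight $P'(1)$ gives the claimed inequality; the only genuinely delicate point is the compensation, just described, of the extra density derivatives generated by the pressure- and viscosity-nonlinearities.
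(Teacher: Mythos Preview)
Your proposal is correct and will prove the lemma, but it takes a noticeably more laborious route than the paper. The paper does not expand $\nabla S_1$, $\nabla S_2$ term by term; instead it integrates by parts \emph{once more} to shift the derivative onto the test function and then estimates the source in $L^2$ directly:
\[
\Big|\int\nabla S_1\cdot\nabla\r\,dx\Big|\le\|S_1\|_{L^2}\|\nabla^2\r\|_{L^2},\qquad
\Big|\int\nabla S_2\cdot\nabla u\,dx\Big|\le\|S_2\|_{L^2}\|\nabla^2 u\|_{L^2},\qquad
\Big|\int\nabla^2 S_3\cdot\nabla^2 n\,dx\Big|\le\|\nabla S_3\|_{L^2}\|\nabla^3 n\|_{L^2}.
\]
With this device the term you flag as the ``hard part'' becomes trivial: $\|h(\r)(\mu\Delta u+(\mu+\lambda)\nabla\dive u)\|_{L^2}\le C\|\r\|_{L^\infty}\|\nabla^2 u\|_{L^2}$, and no commutator or second integration by parts is needed at all. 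Your strategy of expanding $\nabla(h(\r)\Delta u)$ and integrating by parts again arrives at the \emph{same} principal bound $\|\r\|_{L^\infty}\|\nabla^2 u\|_{L^2}^2$ (plus lower-order pieces with a $\|\nabla\r\|_{L^3}$ prefactor), so nothing is lost --- it is just more work. The one genuinely shared idea is the interpolation $\|\r\|_{L^\infty}\lesssim\|\r\|_{L^2}^{1/4}\|\nabla^2\r\|_{L^2}^{3/4}$ combined with the uniform bound \eqref{uni}, which is precisely what produces the $\|\r\|_{L^2}^{1/4}$ prefactor in the statement. Your remark about weighting the $\r$-equation by $P'(1)$ to effect the cancellation of the linear coupling is also correct (the paper suppresses this constant in writing the identity).
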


\begin{proof}
Applying differential operator $\nabla$ to $\eqref{liqu1}_1$, $\eqref{liqu1}_2$ respectively and $\nabla^2$ to $\eqref{liqu1}_3$, then multiplying the resulting identities by $\nabla \r$, $\nabla u$ and $\nabla^2n$ respectively and integrating over $\mathbb R^3$, it is easy to obtain
\begin{equation}\label{a1}
\begin{aligned}
&\frac12\frac{d}{dt}\int(|\nabla\r|^2+|\nabla u|^2+|\nabla^2n|^2)dx+\int(\mu|\nabla^2u|^2+(\mu+\lambda)|\nabla\dive u|^2+|\nabla^3n|^2)dx\\&
=\int \nabla S_1\cdot\nabla\r dx+\int\nabla S_2\cdot\nabla udx+\int \nabla^2 S_3\cdot\nabla^2ndx.
\end{aligned}
\end{equation}
Integrating by part and applying the H\"{o}lder inequality yield
\begin{equation}\label{a2}
\big|\int \nabla S_1\cdot\nabla\r dx\big|\le \|S_1\|_{L^2}\|\nabla^2\r\|_{L^2}.
\end{equation}
Using the H\"{o}lder and Sobolev inequalities, we show that
\begin{equation}\label{d1}
\|S_1\|_{L^2}
\le \|\r\|_{L^3}\|\dive u\|_{L^6}+\|u\|_{L^3}\|\nabla\r\|_{L^6}
\le C(\|\r\|_{H^1}+\|u\|_{H^1})(\|\nabla^2 u\|_{L^2}+\|\nabla^2\r\|_{L^2}).
\end{equation}
This and inequality \eqref{a2} give
\begin{equation}\label{a22}
\big|\int \nabla S_1\cdot\nabla\r dx\big|\le C(\|\r\|_{H^1}+\|u\|_{H^1})(\|\nabla^2 u\|^2_{L^2}+\|\nabla^2\r\|^2_{L^2}).
\end{equation}
Applying the H\"{o}lder and Sobolev inequalities, we get
\begin{equation}\label{a3}
\|u\cdot\nabla u\|_{L^2}\le \|u\|_{L^3}\|\nabla u\|_{L^6}\le C\|u\|_{H^1}\|\nabla^2u\|_{L^2}.
\end{equation}
Using the lower bound of density \eqref{lbrho}, Sobolev inequality and uniform estimate \eqref{uni}, we have
\begin{equation}\label{a4}
\begin{aligned}
\|h(\r)(\mu\Delta u+(\mu+\lambda)\nabla\dive u)\|_{L^2}
\le C\|\r\|_{L^\infty}\|\nabla^2u\|_{L^2}
\le C\|\r\|_{L^2}^{\frac14}\|\nabla^2\r\|_{L^2}^{\frac34}\|\nabla^2 u\|_{L^2}
\le C\|\r\|_{L^2}^{\frac14}\|\nabla^2u\|_{L^2}.
\end{aligned}
\end{equation}
Using the Taylor expression, H\"{o}lder and Sobolev inequalities, we get
\begin{equation}\label{a5}
\|h(\r)\nabla\r\|_{L^2}\le C\|\r\|_{L^3}\|\nabla\r\|_{L^6}\le C\|\r\|_{H^1}\|\nabla^2\r\|_{L^2}.
\end{equation}
By the lower bound of density \eqref{lbrho} and the Sobolev inequality, we obtain that
\begin{equation}\label{a6}
\|g(\r)\nabla n\Delta n\|_{L^2}\le C\|\nabla n\|_{L^3}\|\Delta n\|_{L^6}\le C\|\nabla n\|_{H^1}\|\nabla^3n\|_{L^2}.
\end{equation}
The combination of \eqref{a3}, \eqref{a4}, \eqref{a5} and \eqref{a6} gives
\begin{equation}\label{a10}
\big|\int\nabla S_2\cdot\nabla udx\big|
\le \|S_2\|_{L^2}\|\nabla^2u\|_{L^2}
\le (\|u\|_{H^1}+\|\r\|_{L^2}^{\frac14}+\|\r\|_{H^1}+\|\nabla n\|_{H^1})
(\|\nabla^2u\|_{L^2}^2+\|\nabla^2\r\|_{L^2}^2+\|\nabla^3n\|_{L^2}^2).
\end{equation}
The routine calculation yields directly
\begin{equation*}
\nabla S_3=-\nabla u\cdot\nabla n-u\cdot\nabla^2n+2|\nabla n|\cdot\nabla^2n\cdot(n+\underline d)+|\nabla n|^2\nabla n.
\end{equation*}
It is easy to see that
\begin{equation}\label{a7}
\begin{aligned}
\|\nabla u\cdot\nabla n\|_{L^2}
\!+\!\|u\cdot\nabla^2n\|_{L^2}
&\le \|\nabla n\|_{L^3}\|\nabla u\|_{L^6}
     \!+\!\|u\|_{L^3}\|\nabla^2n\|_{L^6}
\le C(\|\nabla n\|_{H^1}\|\nabla^2u\|_{L^2}
       \!+\!\|u\|_{H^1}\|\nabla^3n\|_{L^2}).
\end{aligned}
\end{equation}
Using the uniform bound \eqref{uni}, we have
\begin{equation}\label{a8}
\begin{aligned}
\|2|\nabla n|\cdot\nabla^2n\cdot(n+\underline d)\|_{L^2}&\le C(\|n\|_{L^\infty}+\underline d)\|\nabla n\|_{L^3}\|\nabla^2n\|_{L^6}\\&
\le C(\|\nabla n\|_{L^2}^{\frac12}\|\nabla^2n\|_{L^2}^{\frac12}+\underline d)\|\nabla n\|_{H^1}\|\nabla^3n\|_{L^2}\\&
\le C\|\nabla n\|_{H^1}\|\nabla^3n\|_{L^2}.
\end{aligned}
\end{equation}
The Sobolev inequality gives
\begin{equation}\label{a9}
\begin{aligned}
\||\nabla n|^2\cdot\nabla n\|_{L^2}
\le 2\|\nabla n\|_{L^3}\|\nabla n\cdot\nabla^2n\|_{L^2}
\le C\|\nabla n\|_{L^3}^2\|\nabla^2n\|_{L^6}
\le C \|\nabla n\|_{H^1}^2\|\nabla^3n\|_{L^2}.
\end{aligned}
\end{equation}
The combination of \eqref{a7}, \eqref{a8} and \eqref{a9} implies
\begin{equation}\label{a11}
\big|\int \nabla^2 S_3\cdot\nabla^2ndx\big|\le C(\|\nabla n\|_{H^1}+\|u\|_{H^1}+\|\nabla n\|_{H^1}^2)(\|\nabla^2u\|_{L^2}^2+\|\nabla^3n\|_{L^2}^2).
\end{equation}
Therefore, combine the estimates \eqref{a1}, \eqref{a22}, \eqref{a10} and \eqref{a11}, we completes the proof of this lemma.
\end{proof}

Next, we establish the energy estimate for the second order spatial derivative of solution $(\r,u,n)$ for the Cauchy problem \eqref{liqu1}, which can help us to achieve the decay rate for them.
\begin{lemm}\label{2un}
Under the assumptions of Theorem \ref{th1}, the global solution $(\r,u,n)$ of Cauchy problem \eqref{liqu1} has the estimate
\begin{equation}\label{p1}
\begin{aligned}
&\frac12\frac{d}{dt}\int(|\nabla^2\r|^2+|\nabla ^2u|^2+|\nabla^3n|^2)dx+\int(\mu|\nabla^3 u|^2+(\mu+\lambda)|\nabla^2\dive u|^2+|\nabla^4n|^2)dx\\&
\le C\big(\|(\r, u, \nabla u,\nabla n)\|_{L^2}^{\frac14}+\|(u,\nabla n)\|_{H^1}+\|\nabla^2n\|_{L^2}^{\frac12}\big)\big(\|\nabla^2\r\|_{L^2}^2+\|\nabla^2 u\|_{H^1}^2+\|\nabla^3n\|_{H^1}^2\big).
\end{aligned}
\end{equation}
\end{lemm}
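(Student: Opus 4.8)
The plan is to run a two--derivative energy estimate in the spirit of the small--data scheme of \cite{mat-ni}, using that after a long time every low--order norm of $(\r,u,\nabla n)$ is small by \eqref{decay} while every top--order norm stays uniformly bounded by \eqref{uni}. First I would apply $\nabla^2$ to $\eqref{liqu1}_1$ and $\eqref{liqu1}_2$ and $\nabla^3$ to $\eqref{liqu1}_3$, take the $L^2$ inner product with $\nabla^2\r$, $\nabla^2 u$ and $\nabla^3 n$ respectively (with the natural weight $P'(1)$ on the density slot, so that the linear density--velocity coupling cancels after integration by parts), and add. This produces exactly the left--hand side of \eqref{p1} plus the three nonlinear integrals $\int\nabla^2 S_1\cdot\nabla^2\r\,dx$, $\int\nabla^2 S_2\cdot\nabla^2 u\,dx$ and $\int\nabla^3 S_3\cdot\nabla^3 n\,dx$; the whole task is then to bound these three quantities by the right--hand side of \eqref{p1}.

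For the momentum integral I would integrate by parts once, $\int\nabla^2 S_2\cdot\nabla^2 u\,dx=-\int\nabla S_2\cdot\nabla^3 u\,dx$. Differentiating $S_2$ once, the only genuinely top--order contribution is $h(\r)\nabla^3 u$ (from $\nabla(h(\r)\Delta u)$ and $\nabla(h(\r)\nabla\dive u)$), which yields a term $\lesssim\|h(\r)\|_{L^\infty}\|\nabla^3 u\|_{L^2}^2\lesssim\|\r\|_{L^\infty}\|\nabla^3 u\|_{L^2}^2\lesssim\|\r\|_{L^2}^{1/4}\|\nabla^3 u\|_{L^2}^2$, using $\|\r\|_{L^\infty}\lesssim\|\r\|_{L^2}^{1/4}\|\nabla^2\r\|_{L^2}^{3/4}$ together with \eqref{uni}; this is precisely the origin of the $\|\r\|_{L^2}^{1/4}$ prefactor in \eqref{p1}. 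All remaining terms $u\cdot\nabla u$, $h'(\r)\nabla\r\,\Delta u$, $f(\r)\nabla\r$, $g(\r)\nabla d\cdot\Delta d$ and their first derivatives are of lower order and are handled by the H\"older inequality in the $L^3$--$L^6$ (or $L^\infty$--$L^2$--$L^2$) form, bounding $\|u\|_{L^3},\|\nabla n\|_{L^3}\lesssim\|(u,\nabla n)\|_{H^1}$ and the $L^\infty$ norms of $\r,u,\nabla u,\nabla n$ by Gagliardo--Nirenberg (e.g. $\|\nabla u\|_{L^\infty}\lesssim\|\nabla u\|_{L^2}^{1/4}\|\nabla^3 u\|_{L^2}^{3/4}$) and \eqref{uni}; each then comes out as (a low--order norm appearing in \eqref{p1}) $\times(\|\nabla^2\r\|_{L^2}^2+\|\nabla^2 u\|_{H^1}^2+\|\nabla^3 n\|_{H^1}^2)$.

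For the director integral $\int\nabla^3 S_3\cdot\nabla^3 n\,dx$ with $S_3=-u\cdot\nabla n+|\nabla n|^2(n+\underline d)$, the transport piece has leading term $-\int u\cdot\nabla(\nabla^3 n)\cdot\nabla^3 n\,dx=\tfrac12\int(\dive u)|\nabla^3 n|^2\,dx\lesssim\|\nabla u\|_{L^\infty}\|\nabla^3 n\|_{L^2}^2$, and its sub--leading terms are estimated just like those of $S_2$. For the quadratic/cubic piece $|\nabla n|^2(n+\underline d)$ I would integrate by parts once to move the top derivative, after which the single surviving factor $\nabla^4 n$ is absorbed into the dissipation $\|\nabla^4 n\|_{L^2}^2$ by Young's inequality, leaving terms such as $(\|n\|_{L^\infty}+1)^2\|\nabla n\|_{L^\infty}^2\|\nabla^3 n\|_{L^2}^2$; the $L^\infty$ norms of $n$ and $\nabla n$ are then controlled by Gagliardo--Nirenberg ($\|n\|_{L^\infty}\lesssim\|\nabla n\|_{L^2}^{1/2}\|\nabla^2 n\|_{L^2}^{1/2}$ and $\|\nabla n\|_{L^\infty}\lesssim\|\nabla n\|_{L^2}^{1/4}\|\nabla^3 n\|_{L^2}^{3/4}$) together with \eqref{uni}, which is where the $\|\nabla^2 n\|_{L^2}^{1/2}$ and $\|\nabla n\|_{H^1}$ prefactors in \eqref{p1} come from.

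The term I expect to be the main obstacle is the continuity integral $\int\nabla^2 S_1\cdot\nabla^2\r\,dx$ with $S_1=-\r\dive u-u\cdot\nabla\r$, because \eqref{p1} carries no dissipation for $\r$ and one has no bound on $\|\nabla^3\r\|_{L^2}$, so one cannot shift a derivative onto $\r$ by integration by parts. The key is the transport structure: in $\nabla^2(u\cdot\nabla\r)$ the only dangerous term $u\cdot\nabla(\nabla^2\r)$, tested against $\nabla^2\r$, gives $\int u\cdot\nabla(\nabla^2\r)\cdot\nabla^2\r\,dx=-\tfrac12\int(\dive u)|\nabla^2\r|^2\,dx\lesssim\|\nabla u\|_{L^\infty}\|\nabla^2\r\|_{L^2}^2$, while the remaining terms of $\nabla^2 S_1$ carry at most one factor of $\nabla^3 u$, which is absorbed into $\mu\|\nabla^3 u\|_{L^2}^2$ by Young's inequality (leaving $\|\r\|_{L^\infty}^2$ or $\|\nabla\r\|_{L^2}$ times $\|\nabla^2\r\|_{L^2}^2$), or else reduce again to the form $\|\nabla u\|_{L^\infty}\|\nabla^2\r\|_{L^2}^2$; Gagliardo--Nirenberg and \eqref{uni} then bring the exponents to the clean form. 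Collecting the three estimates and choosing the Young constants small enough that the $\epsilon\|\nabla^3 u\|_{L^2}^2+\epsilon\|\nabla^4 n\|_{L^2}^2$ remainders are absorbed into the left--hand dissipation finishes the argument. I anticipate the real work being precisely this bookkeeping: checking that every surviving quadratic factor $\|\nabla^2\r\|_{L^2}^2$, $\|\nabla^2 u\|_{H^1}^2$, $\|\nabla^3 n\|_{H^1}^2$ is genuinely multiplied by one of the low--order norms listed in \eqref{p1} (all of which decay by \eqref{decay}), and that no uncontrolled $\|\nabla^3\r\|_{L^2}$ is ever produced.
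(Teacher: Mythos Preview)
Your proposal is correct and follows essentially the same route as the paper: derive identity \eqref{b0}, use the transport cancellation $\int u\cdot\nabla(\nabla^2\r)\cdot\nabla^2\r\,dx=-\tfrac12\int\dive u\,|\nabla^2\r|^2\,dx$ for $S_1$, integrate by parts once to reduce the $S_2$ integral to $-\int\nabla S_2\cdot\nabla^3 u\,dx$, and close with the Gagliardo--Nirenberg interpolations you listed together with \eqref{uni}. Two small procedural differences: for $S_3$ the paper does not exploit the transport structure of $u\cdot\nabla n$ but simply integrates by parts once and bounds $\|\nabla^2 S_3\|_{L^2}\|\nabla^4 n\|_{L^2}$; and the paper does \emph{not} absorb any $\epsilon\|\nabla^3 u\|_{L^2}^2$ or $\epsilon\|\nabla^4 n\|_{L^2}^2$ into the left side here---it leaves these on the right inside $\|\nabla^2 u\|_{H^1}^2$ and $\|\nabla^3 n\|_{H^1}^2$ multiplied by the decaying prefactors, postponing absorption to Lemma~\ref{t1}. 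Your variant would yield the same estimate with $\|(\r,u,\nabla n)\|_{L^2}^{1/2}$ in place of some of the $1/4$ powers, which is harmless for the downstream argument but not literally \eqref{p1} as stated.
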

\begin{proof}
Applying $\nabla^2$ to $\eqref{liqu1}_1$, $\eqref{liqu1}_2$ respectively and $\nabla^3$ to $\eqref{liqu1}_3$, then multiplying the resulting identities by $\nabla^2 \r$, $\nabla^2 u$ and $\nabla^3n$ respectively and integrating over $\mathbb R^3$, it is easy to obtain
\begin{equation}\label{b0}
\begin{aligned}
&\frac12\frac{d}{dt}\int(|\nabla^2\r|^2+|\nabla^2 u|^2+|\nabla^3n|^2)dx+\int(\mu|\nabla^3u|^2+(\mu+\lambda)|\nabla^2\dive u|^2+|\nabla^4n|^2)dx\\&
=\int \nabla^2 S_1\cdot\nabla^2\r dx+\int\nabla^2 S_2\cdot\nabla^2 udx+\int \nabla^3S_3\cdot\nabla^3ndx.
\end{aligned}
\end{equation}
Recall that $S_1:=-\r\dive u-u\cdot\nabla\r$, the direct computation gives
\begin{equation*}
\nabla^2(\r\dive u)=\r\nabla^2\dive u+2\nabla\r\cdot\nabla\dive u+\nabla^2\r\cdot\dive u,
\end{equation*}
then it holds
\begin{equation}\label{b1}
\begin{aligned}
&\|\nabla^2(\r\dive u)\|_{L^2}\|\nabla^2\r\|_{L^2}\\&
\le (\|\r\|_{L^\infty}\|\nabla^2\dive u\|_{L^2}+\|\nabla\r\|_{L^3}\|\nabla\dive u\|_{L^6})\|\nabla^2\r\|_{L^2}+\|\dive u\|_{L^\infty}\|\nabla^2\r\|_{L^2}^2\\&
\le C(\|\r\|_{L^\infty}+\|\nabla\r\|_{L^3})(\|\nabla^3u\|_{L^2}^2+\|\nabla^2\r\|_{L^2}^2)+C\|\dive u\|_{L^\infty}\|\nabla^2\r\|_{L^2}^2.
\end{aligned}
\end{equation}
By routine checking, one may check that
\begin{equation*}
\nabla^2(u\cdot\nabla\r)=u\cdot\nabla(\nabla^2\r)+2\nabla u\cdot\nabla^2\r+\nabla^2u\cdot\nabla\r.
\end{equation*}
The integration by part yields directly
\begin{equation*}
\int u\cdot\nabla(\nabla^2\r)\cdot\nabla^2\r dx=\frac12\int u\cdot\nabla(|\nabla^2\r|^2)dx=-\frac12\int\dive u|\nabla^2\r|^2dx,
\end{equation*}
and hence, we have
\begin{equation}\label{b2}
\begin{aligned}
&\big|\int\nabla^2(u\cdot\nabla\r)\cdot\nabla^2\r dx\big|\\
&\le\|\dive u\|_{L^\infty}\|\nabla^2\r\|_{L^2}^2+(\|\nabla u\|_{L^\infty}\|\nabla^2\r\|_{L^2}+\|\nabla\r\|_{L^3}\|\nabla^2u\|_{L^6})\|\nabla^2\r\|_{L^2}\\&
\le \|\nabla u\|_{L^\infty}\|\nabla^2\r\|_{L^2}^2+C\|\nabla\r\|_{L^3}(\|\nabla^3u\|_{L^2}^2+\|\nabla^2\r\|_{L^2}^2).
\end{aligned}
\end{equation}
The combination of \eqref{b1} and \eqref{b2} gives
\begin{equation}\label{b3}
\begin{aligned}
\big|\int\nabla^2S_1\cdot\nabla^2\r dx\big|\le C(\|\nabla\r\|_{L^3}+\|\r\|_{L^\infty})(\|\nabla^2\r\|_{L^2}^2+\|\nabla^3u\|_{L^2}^2)+C\|\nabla u\|_{L^\infty}\|\nabla^2\r\|_{L^2}^2.
\end{aligned}
\end{equation}
By virtue of the Sobolev inequality and the uniform estimate \eqref{uni}, it follows that
\begin{equation}\label{b4}
\begin{aligned}
\|\r\|_{L^\infty}+\|\nabla\r\|_{L^3}+\|\nabla u\|_{L^3}
&\le C(\|\r\|_{L^2}^{\frac14}\|\nabla^2\r\|_{L^2}^{\frac34}+\|\r\|_{L^2}^{\frac14}\|\nabla^2\r\|_{L^2}^{\frac34}+\|u\|_{L^2}^{\frac14}\|\nabla^2u\|_{L^2}^{\frac34})\\&
\le C(\|\r\|_{L^2}^{\frac14}+\|u\|_{L^2}^{\frac14}),
\end{aligned}
\end{equation}
and
\begin{equation}\label{b5}
\begin{aligned}
\|\nabla u\|_{L^\infty}\|\nabla^2\r\|_{L^2}^2&\le C \|\nabla u\|_{L^2}^{\frac14}\|\nabla^3u\|_{L^2}^{\frac34}\|\nabla^2\r\|_{L^2}^{\frac54}\|\nabla^2\r\|_{L^2}^{\frac34}\\&
\le C\|\nabla u\|_{L^2}^{\frac14}\|\nabla^3u\|_{L^2}^{\frac34}\|\nabla^2\r\|_{L^2}^{\frac54}\\&
\le C\|\nabla u\|_{L^2}^{\frac14}(\|\nabla^3u\|_{L^2}^2+\|\nabla ^2\r\|_{L^2}^2).
\end{aligned}
\end{equation}
Substituting \eqref{b4} and \eqref{b5} into \eqref{b3}, we get
\begin{equation}\label{c1}
\big|\int\nabla^2S_1\cdot\nabla^2\r dx\big|\le C(\|\r\|_{L^2}^{\frac14}+\|\nabla u\|_{L^2}^{\frac14})(\|\nabla^2\r\|_{L^2}^2+\|\nabla^3u\|_{L^2}^2).
\end{equation}
Next, we shall estimate the second term on the right hand side of \eqref{b0}. Integrating by part, we get
\begin{equation}\label{b10}
\int\nabla^2 S_2\cdot\nabla^2udx=-\int\nabla S_2\cdot\nabla^3udx.
\end{equation}
Straightforward calculation shows that
   \begin{equation}\label{b6}
   \begin{split}
   \nabla S_2=
   &-\nabla u\cdot \nabla u-u\cdot \nabla(\nabla u)
     -\frac{\r}{1+\r}[\mu\nabla\Delta u+(\mu+\lambda)\nabla^2\dive u]\\
   & -\Big[\frac{P'(1+\r)}{1+\r}-\frac{P'(1)}{1}\Big]\nabla^2\r
     -\frac{\nabla\r}{(1+\r)^2}[\mu\Delta u+(\mu+\lambda)\nabla\dive u]\\
   &-\frac{P''(1+\r)(1+\r)-P'(1+\r)}{(1+\r)^2}\nabla\r\nabla\r-\frac{1}{\r+1}\nabla(\nabla n)\Delta n\\&
   -\frac{1}{\r+1}\nabla n\cdot\nabla\Delta n
   +\frac{\nabla\r}{(1+\r)^2}\nabla n\cdot\Delta n.
   \end{split}
   \end{equation}
Observe that
\begin{equation*}
P'(1+\r)=\gamma(1+\r)^{\gamma-1}, \qquad P''(1+\r)=\gamma(\gamma-1)(1+\r)^{\gamma-2},
\end{equation*}
and hence, it holds on
\begin{equation}\label{b7}
\frac{P''(1+\r)(1+\r)-P'(1+\r)}{(1+\r)^2}=\gamma(\gamma-2)(1+\r)^{\gamma-3}, \qquad \frac{P'(1+\r)}{1+\r}=\gamma(1+\r)^{\gamma-2}.
\end{equation}
The combination of \eqref{b6} and \eqref{b7} yields directly
\begin{equation}\label{b8}
\begin{aligned}
\|\nabla S_2 \|_{L^2}
&\lm C(\|\nabla u\|_{L^3}\|\nabla u\|_{L^6}
     +\|u\|_{L^3}\|\nabla^2 u\|_{L^6}
     +\|\r\|_{L^{\infty}}\|\nabla^3 u\|_{L^2})\\&
     \quad +C(\|\r\|_{L^{\infty}}\|\nabla^2\r\|_{L^2}
     +\|\nabla\r\|_{L^3}\|\nabla^2u\|_{L^6}+\|\nabla\r\|_{L^3}\|\nabla\r\|_{L^6})\\&
    \quad +C(\|\nabla^2n\|_{L^3}\|\nabla^2n\|_{L^6}+\|\nabla n\|_{L^3}\|\nabla^3n\|_{L^6}+\|\nabla n\|_{L^\infty}\|\nabla\r\|_{L^3}\|\nabla^2n\|_{L^6})\\&
    \le C(\|\nabla u\|_{L^3}+\|u\|_{H^1}+\|\r\|_{L^\infty}+\|\nabla \r\|_{L^3})(\|\nabla^2u\|_{H^1}+\|\nabla^2\r\|_{L^2})\\&
    \quad +(\|\nabla^2n\|_{L^3}+\|\nabla n\|_{L^\infty}\|\nabla\r\|_{L^3}+\|\nabla n\|_{H^1})\|\nabla^3n\|_{H^1}.
\end{aligned}
\end{equation}
By virtue of the Sobolev inequality and the uniform estimate \eqref{uni}, it follows that
\begin{equation}\label{b9}
\begin{aligned}
\|\nabla^2n\|_{L^3}+\|\nabla n\|_{L^\infty}\|\nabla\r\|_{L^3}&
\le C(\|\nabla n\|_{L^2}^{\frac14}\|\nabla^3n\|_{L^2}^{\frac34}+\|\nabla n\|_{H^2}\|\r\|_{L^2}^{\frac14}\|\nabla^2\r\|_{L^2}^{\frac34})\\&
\le C(\|\nabla n\|_{L^2}^{\frac14}+\|\r\|_{L^2}^{\frac14}).
\end{aligned}
\end{equation}
Substituting \eqref{b4} and \eqref{b9} into \eqref{b8}, we obtain that
\begin{equation}\label{d2}
\|\nabla S_2\|_{L^2}\le C(\|u\|_{L^2}^{\frac14}+\|u\|_{H^1}+\|\r\|_{L^2}^{\frac14}+\|\nabla n\|_{L^2}^{\frac14}+\|\nabla n\|_{H^1})(\|\nabla^2u\|_{H^1}+\|\nabla^2\r\|_{L^2}+\|\nabla^3n\|_{H^1}).
\end{equation}
This together with \eqref{b10} implies that
\begin{equation}\label{c2}
\begin{aligned}
\big|\int\nabla^2 S_2\cdot\nabla^2udx\big|
\le C(\|u\|_{L^2}^{\frac14}+\|u\|_{H^1}+\|\r\|_{L^2}^{\frac14}
      +\|\nabla n\|_{L^2}^{\frac14}+\|\nabla n\|_{H^1})(\|\nabla^2u\|^2_{H^1}+\|\nabla^2\r\|^2_{L^2}+\|\nabla^3n\|^2_{H^1}).
\end{aligned}
\end{equation}
Finally, we focus on the last term on the right hand side of \eqref{b0}.
Regularly computation shows that
\begin{equation*}
\nabla^2S_3
=-\nabla^2u\cdot\nabla n-2\nabla u\cdot\nabla^2n-u\cdot\nabla^3n
+2|\nabla^2n|^2(n+\underline d)
+2|\nabla n||\nabla^3n|(n+\underline d)+3|\nabla^2n||\nabla n|^2.
\end{equation*}
Using the H\"{o}lder and Sobolev inequalities, we have
\begin{equation}\label{b11}
\begin{aligned}
\|\nabla^2 S_3\|_{L^2}&\le \|\nabla n\|_{L^3}\|\nabla^2 u\|_{L^6}+\|\nabla u\|_{L^3}\|\nabla^2n\|_{L^6}+\|u\|_{L^\infty}\|\nabla^3n\|_{L^2}\\&
\quad +(\|n\|_{L^\infty}+\underline d)(\|\nabla^2n\|_{L^3}\|\nabla ^2n\|_{L^6}+\|\nabla n\|_{L^3}\|\nabla^3n\|_{L^6})\\&
\quad +\|\nabla n\|_{L^\infty}\|\nabla n\|_{L^3}\|\nabla^2n\|_{L^6}\\&
\le C(\|\nabla n\|_{L^3}+\|\nabla u\|_{L^3}+\|u\|_{L^\infty})(\|\nabla^3n\|_{L^2}+\|\nabla^3u\|_{L^2})\\&
\quad +C(\|n\|_{L^\infty}+\underline d)(\|\nabla^2n\|_{L^3}\|\nabla ^3n\|_{L^2}+\|\nabla n\|_{L^3}\|\nabla^4n\|_{L^2})\\&
\quad +C\|\nabla n\|_{L^\infty}\|\nabla n\|_{L^3}\|\nabla^3n\|_{L^2}.
\end{aligned}
\end{equation}
By the Sobolev inequality and the uniform estimate \eqref{uni},
it is easy to check that
\begin{equation}\label{b12}
\|n\|_{L^\infty}+\|\nabla n\|_{L^3}\le C \|\nabla n\|_{L^2}^{\frac12}\|\nabla^2n\|_{L^2}^{\frac12}\le C\|\nabla^2n\|_{L^2}^{\frac12},
\end{equation}
and
\begin{equation}\label{b13}
\|\nabla^2n\|_{L^3}\le C\|\nabla^2n\|_{L^2}^{\frac12}\|\nabla^3n\|_{L^2}^{\frac12}\le C\|\nabla^2n\|_{L^2}^{\frac12}.
\end{equation}
Substituting \eqref{b4}, \eqref{b12} and \eqref{b13} into \eqref{b11}, we get
\begin{equation*}
\|\nabla^2S_3\|_{L^2}\le C(\|\nabla^2n\|_{L^2}^{\frac12}+\|u\|_{L^2}^{\frac14}+\|\nabla^2n\|_{L^2})(\|\nabla^3n\|_{H^1}+\|\nabla^3u\|_{L^2}),
\end{equation*}
which implies that
\begin{equation}\label{c3}
\begin{aligned}
\big|\int \nabla^3S_3\cdot\nabla^3ndx\big|
\le \|\nabla^2S_3\|_{L^2}\|\nabla^4n\|_{L^2}
\le C (\|\nabla^2n\|_{L^2}^{\frac12}+\|u\|_{L^2}^{\frac14}+\|\nabla^2n\|_{L^2})
       (\|\nabla^3n\|^2_{H^1}+\|\nabla^3u\|^2_{L^2}).
\end{aligned}
\end{equation}
Then the combination of \eqref{c1}, \eqref{c2} and \eqref{c3} completes the proof.
\end{proof}

In order to close the energy estimate, it is necessary to establish the dissipation estimate for $\nabla^2\r$.

\begin{lemm}\label{2rho}
Under the assumptions of Theorem \ref{th1}, the global solution $(\r,u,n)$ of Cauchy problem \eqref{liqu1} has the estimate
\begin{equation}\label{p2}
\begin{aligned}
&\frac{d}{dt}\int\nabla u\cdot\nabla^2\r dx+\frac{3P'(1)}{4}\int |\nabla^2\r|^2dx\\&
\le C\|\nabla^2u\|_{H^1}^2+(\|(\r,u,\nabla n)\|_{H^1}+\|(\r,u,\nabla n)\|_{L^2}^{\frac14})(\|\nabla^2\r\|_{L^2}^2+\|\nabla^2u\|_{H^1}^2+\|\nabla^3n\|_{H^1}^2).
\end{aligned}
\end{equation}
\end{lemm}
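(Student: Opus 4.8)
The idea is to exploit the momentum equation $\eqref{liqu1}_2$ to generate the missing dissipation $\|\nabla^2\r\|_{L^2}^2$ at the cost of a controllable time derivative. First I would apply $\nabla$ to $\eqref{liqu1}_2$ and take the $L^2$ inner product of the resulting identity with $\nabla\nabla\r=\nabla^2\r$. The pressure term then produces precisely $P'(1)\|\nabla^2\r\|_{L^2}^2$, while the time-derivative term is reorganized through $\int\nabla\p_t u\cdot\nabla^2\r\,dx=\frac{d}{dt}\int\nabla u\cdot\nabla^2\r\,dx-\int\nabla u\cdot\nabla^2\p_t\r\,dx$. Substituting $\p_t\r=-\dive u+S_1$ from $\eqref{liqu1}_1$ and integrating by parts, the linear part of $-\int\nabla u\cdot\nabla^2\p_t\r\,dx$ reduces to $\|\nabla\dive u\|_{L^2}^2$, which is harmless and bounded by $C\|\nabla^2u\|_{H^1}^2$.

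Next, the two viscous cross terms $\mu\int\nabla\Delta u\cdot\nabla^2\r\,dx$ and $(\mu+\lambda)\int\nabla^2\dive u\cdot\nabla^2\r\,dx$ are estimated by Cauchy--Schwarz followed by Young's inequality, absorbing $\tfrac14P'(1)\|\nabla^2\r\|_{L^2}^2$ back into the pressure dissipation; this leaves the coefficient $\tfrac34P'(1)$ on the left-hand side and only the non-small price $C\|\nabla^2u\|_{H^1}^2$ on the right. That term is acceptable because \eqref{p2} itself carries it, and it will eventually be dominated after linearly combining Lemma~\ref{2rho} with Lemma~\ref{2un}.

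The two genuinely nonlinear terms are $\int\nabla S_2\cdot\nabla^2\r\,dx$ and $\int\nabla u\cdot\nabla^2S_1\,dx$. For the first I would use Cauchy--Schwarz together with the bound \eqref{d2} on $\|\nabla S_2\|_{L^2}$, which already has the right structure: a coefficient of the form $\|(\r,u,\nabla n)\|_{H^1}+\|(\r,u,\nabla n)\|_{L^2}^{1/4}$ times $\|\nabla^2u\|_{H^1}+\|\nabla^2\r\|_{L^2}+\|\nabla^3n\|_{H^1}$, so a further Young's inequality closes it. For the second I would integrate by parts twice to move both derivatives off $S_1$, rewriting it as $\int\Delta\dive u\,S_1\,dx$, and then apply Cauchy--Schwarz with the bound \eqref{d1} on $\|S_1\|_{L^2}$, yielding $\le C(\|\r\|_{H^1}+\|u\|_{H^1})(\|\nabla^2u\|_{H^1}^2+\|\nabla^2\r\|_{L^2}^2)$. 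Collecting all contributions then gives \eqref{p2}.

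The main subtlety, and the reason for distributing the derivatives in exactly this way, is the treatment of $\int\nabla u\cdot\nabla^2S_1\,dx$: a careless integration by parts leaves a factor $\|u\|_{L^\infty}$, which under the uniform estimate \eqref{uni} is merely bounded rather than small. Shifting \emph{both} derivatives onto the velocity isolates $\|S_1\|_{L^2}$ instead, and \eqref{d1} shows this quantity carries the small factor $\|\r\|_{H^1}+\|u\|_{H^1}$ (small for large time by \eqref{decay}). Apart from this point, everything is routine Gagliardo--Nirenberg interpolation combined with \eqref{uni} and \eqref{decay}, which together ensure smallness of every coefficient except the harmless $C\|\nabla^2u\|_{H^1}^2$.
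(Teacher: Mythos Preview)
Your proposal is correct and follows essentially the same route as the paper: apply $\nabla$ to $\eqref{liqu1}_2$, test against $\nabla^2\r$, reorganize the time derivative via $\eqref{liqu1}_1$, absorb $\tfrac14 P'(1)\|\nabla^2\r\|_{L^2}^2$ from the viscous cross terms, and close the nonlinear contributions through \eqref{d1} and \eqref{d2}. Your remark about shifting both derivatives off $S_1$ so as to invoke $\|S_1\|_{L^2}$ rather than leave a merely bounded $\|u\|_{L^\infty}$ factor is exactly the point; the paper achieves the same thing by one integration by parts on $\int\nabla\dive u\cdot\nabla S_1\,dx$.
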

\begin{proof}
Applying $\nabla$ operator to $\eqref{liqu1}_2$ and multiplying the resulting by $\nabla^2\r$, and integrating over $\mathbb R^3$, we have
\begin{equation*}
\begin{aligned}
\int\p_t(\nabla u)\cdot\nabla^2\r dx-\int(\mu\Delta\nabla u+(\mu+\lambda)\nabla^2\dive u)\cdot\nabla^2\r dx+P'(1)\int |\nabla^2\r|^2dx=\int\nabla S_2\cdot\nabla^2\r dx.
\end{aligned}
\end{equation*}
Using the first equation in \eqref{liqu1}, it holds on
\begin{equation*}
\begin{aligned}
\int\p_t(\nabla u)\cdot\nabla^2\r dx&=\frac{d}{dt}\int\nabla u\cdot\nabla^2\r dx-\int\nabla u\cdot\nabla^2\r_tdx\\&
=\frac{d}{dt}\int\nabla u\cdot\nabla^2\r dx+\int\nabla\dive u\cdot\nabla\r_t dx\\&
=\frac{d}{dt}\int\nabla u\cdot\nabla^2\r dx+\int\nabla\dive u\cdot(\nabla S_1-\nabla\dive u) dx.
\end{aligned}
\end{equation*}
Then integrating by part, and using the  H\"{o}lder and Cauchy inequalities, we obtain that
\begin{equation*}
\begin{aligned}
&\frac{d}{dt}\int\nabla u\cdot\nabla^2\r dx+P'(1)\int |\nabla^2\r|^2dx\\&=\int(\mu\Delta\nabla u+(\mu+\lambda)\nabla^2\dive u)\cdot \nabla^2\r dx+\int\nabla S_2\cdot\nabla^2\r dx+\int\nabla\dive u\cdot(\nabla\dive u-\nabla S_1)dx\\&
\le \frac14P'(1)\|\nabla^2\r\|_{L^2}^2+C\|\nabla^2 u\|_{H^1}^2+C(\|\nabla S_2\|_{L^2}^2+\|S_1\|_{L^2}^2),
\end{aligned}
\end{equation*}
which, together with \eqref{d1} and \eqref{d2} completes the proof of this lemma.
\end{proof}

Combining the estimates obtained in Lemmas \ref{2un} and \ref{2rho},
we derive the following energy estimate.

\begin{lemm}\label{t1}
Under the assumptions of Theorem \ref{th1}, we define
\begin{equation*}
X_h(t):=\frac12\big(\|\nabla^2\r\|_{L^2}^2+\|\nabla^2u\|_{L^2}^2+\|\nabla^3n\|_{L^2}^2+2\delta\int\nabla u\cdot\nabla^2\r dx\big).
\end{equation*}
Then there exists a large enough time $T_1>0$, such that
\begin{equation}\label{e1}
\begin{aligned}
&\frac{d}{dt}X_h(t)+\frac12\int(\mu|\nabla^3u|^2+(\mu+\lambda)|\nabla^2\dive u|^2+|\nabla^4n|^2)dx+\frac{P'(1)\delta}{2}\|\nabla^2\r\|_{L^2}^2\\&
\le \delta C_2(\|\nabla^2u\|_{L^2}^2+\|\nabla^3n\|_{L^2}^2)
\end{aligned}
\end{equation}
for all $t\ge T_1$, where $C_2$ is a constant independent of time
and $\delta$ is a small constant.
\end{lemm}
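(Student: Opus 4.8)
The plan is to combine the second-order energy inequality \eqref{p1} from Lemma \ref{2un} with the density dissipation inequality \eqref{p2} from Lemma \ref{2rho} via a suitable small multiplier $\delta$, then absorb all nonlinear contributions into the dissipation by exploiting decay. Concretely, I would multiply \eqref{p2} by $\delta>0$ and add it to \eqref{p1}; the left-hand side produces exactly $\frac{d}{dt}X_h(t)$ plus $\int(\mu|\nabla^3u|^2+(\mu+\lambda)|\nabla^2\dive u|^2+|\nabla^4n|^2)dx+\frac{3P'(1)\delta}{4}\|\nabla^2\r\|_{L^2}^2$, since $X_h$ was defined precisely so that its time derivative matches. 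On the right-hand side one gets $\delta C\|\nabla^2u\|_{H^1}^2$ coming from the linear term in \eqref{p2}, plus a coefficient $\big(\|(\r,u,\nabla u,\nabla n)\|_{L^2}^{1/4}+\|(u,\nabla n)\|_{H^1}+\|\nabla^2n\|_{L^2}^{1/2}+\delta(\cdots)\big)$ multiplying $\big(\|\nabla^2\r\|_{L^2}^2+\|\nabla^2u\|_{H^1}^2+\|\nabla^3n\|_{H^1}^2\big)$.

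The next step is to fix $\delta$ small enough that the term $\delta C\|\nabla^3u\|_{L^2}^2$ (arising from $\|\nabla^2u\|_{H^1}^2=\|\nabla^2u\|_{L^2}^2+\|\nabla^3u\|_{L^2}^2$) can be absorbed into $\tfrac12\mu\|\nabla^3u\|_{L^2}^2$ from the dissipation; likewise the $\delta C\|\nabla^3n\|_{H^1}^2$ piece absorbs its $\|\nabla^4n\|_{L^2}^2$ part into $\tfrac12\|\nabla^4n\|_{L^2}^2$. After this choice of $\delta$, what survives on the right is $\delta C_2(\|\nabla^2u\|_{L^2}^2+\|\nabla^3n\|_{L^2}^2)$ together with the genuinely nonlinear term whose prefactor is the sum of norms displayed above. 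I would then invoke the decay estimate \eqref{decay}, namely $\|\r(t)\|_{H^1}+\|u(t)\|_{H^1}+\|\nabla n(t)\|_{H^1}\le C(1+t)^{-3/4}$, together with the uniform bound \eqref{uni}, to conclude that each of $\|(\r,u,\nabla u,\nabla n)\|_{L^2}^{1/4}$, $\|(u,\nabla n)\|_{H^1}$ and $\|\nabla^2n\|_{L^2}^{1/2}$ tends to zero as $t\to\infty$ (the first since $\|\nabla u\|_{L^2}\le\|u\|_{H^1}\to0$; the last since $\|\nabla^2 n\|_{L^2}$ is controlled by the $H^1$-norm of $\nabla n$ which decays). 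Hence there is $T_1>0$ so that for $t\ge T_1$ this nonlinear prefactor is $\le\eta$ for $\eta$ as small as we wish, and the whole nonlinear term is then absorbed into the left-hand dissipation $\tfrac12\int(\mu|\nabla^3u|^2+\cdots)dx+\tfrac{3P'(1)\delta}{4}\|\nabla^2\r\|_{L^2}^2$, leaving a clean $\tfrac{P'(1)\delta}{2}\|\nabla^2\r\|_{L^2}^2$ on the left, which is \eqref{e1}.

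One point requiring a little care is that $X_h(t)$ is comparable to $\|\nabla^2(\r,u,\nabla n)(t)\|_{L^2}^2$ only for $\delta$ small: by Cauchy--Schwarz, $2\delta|\int\nabla u\cdot\nabla^2\r\,dx|\le\delta(\|\nabla u\|_{L^2}^2+\|\nabla^2\r\|_{L^2}^2)$, and since $\|\nabla u\|_{L^2}\le\|\nabla^2 u\|_{L^2}$ (or one uses $\|\nabla u\|_{L^2}^2\le C\|u\|_{L^2}\|\nabla^2 u\|_{L^2}$ plus the uniform bound), the cross term is dominated by $\tfrac14$ of the quadratic part once $\delta$ is small; this ensures $X_h\ge0$ and that \eqref{e1} is a meaningful differential inequality. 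I would therefore choose $\delta$ to satisfy simultaneously the absorption conditions $\delta C\le\tfrac12\mu$ (for the $\nabla^3u$ term), $\delta C\le\tfrac12$ (for the $\nabla^4n$ term), and the comparability condition. The main obstacle, and the reason the order of operations matters, is that $\delta$ must be fixed \emph{before} choosing $T_1$: the nonlinear prefactor in \eqref{p2} also carries a factor that, after multiplication by $\delta$, must still be absorbable — so one first freezes $\delta$ using only the time-independent constants $\mu,\lambda,P'(1)$ and the structural constants from Lemmas \ref{2un}--\ref{2rho}, and only then uses \eqref{decay} to push $t$ large enough that the remaining smallness kicks in. This is exactly the ``small-data strategy applied after a long time'' philosophy announced in the introduction, and no new estimate beyond \eqref{p1}, \eqref{p2}, \eqref{decay}, \eqref{uni} is needed.
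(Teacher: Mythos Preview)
Your proposal is correct and matches the paper's proof: form $\eqref{p1}+\delta\cdot\eqref{p2}$, fix $\delta$ small so that the linear contribution $\delta C\|\nabla^3u\|_{L^2}^2$ from \eqref{p2} is absorbed into the dissipation, then use the decay \eqref{decay} to choose $T_1$ large enough that the nonlinear prefactor falls below $\tfrac14\min\{\mu,1,P'(1)\delta\}$ and is absorbed as well. Two minor slips (neither affects the argument for this lemma): \eqref{p2} has no \emph{linear} $C\|\nabla^3n\|_{H^1}^2$ term, only the nonlinear one you already handle via decay; and the inequality $\|\nabla u\|_{L^2}\le\|\nabla^2u\|_{L^2}$ is false in general, but the comparability of $X_h$ with $\|\nabla^2(\r,u,\nabla n)\|_{L^2}^2$ is not asserted in this lemma and is obtained later in the paper by subtracting the low--medium-frequency part of the cross term.
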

\begin{proof}
Multiplying $\delta$ to \eqref{p2} and summing with \eqref{p1}, choosing $\delta$ small enough and using the uniform estimate \eqref{uni}, we get
\begin{equation*}
\begin{aligned}
&\frac{d}{dt}X_h(t)+\int(\frac{3\mu}{4}|\nabla^3u|^2+(\mu+\lambda)|\nabla^2\dive u|^2+|\nabla^4n|^2)dx+\frac{3P'(1)\delta}{4}\|\nabla^2\r\|_{L^2}^2\\&\le
C(\|(\r,u,\nabla u,\nabla n)\|_{L^2}^\frac14+\|(\r,u,\nabla n)\|_{H^1}+\|\nabla^2n\|_{L^2}^\frac12)(\|\nabla^2\r\|_{L^2}^2+\|\nabla^3u\|_{L^2}^2+\|\nabla^4n\|_{L^2}^2)\\&
\quad +C(\|(\r,u,\nabla u,\nabla n)\|_{L^2}^\frac14+\|(u,\nabla n)\|_{H^1}+\|\nabla^2n\|_{L^2}^\frac12)\|\nabla^3n\|_{L^2}^2\\&
\quad +C\delta\|\nabla^2u\|_{L^2}^2+C_1\delta\|\nabla^3n\|_{L^2}^2.
\end{aligned}
\end{equation*}
According to the decay result \eqref{decay}, one may conclude that
\begin{equation*}
\|(\r,u,\nabla u,\nabla n)\|_{L^2}^\frac14+\|(\r,u,\nabla n)\|_{H^1}+\|\nabla^2n\|_{L^2}^\frac12\le C(1+t)^{-\frac{3}{16}}.
\end{equation*}
Thus, there exists a large time $T_1>0$ such that
\begin{equation*}
C\|(\r,u,\nabla u,\nabla n)\|_{L^2}^\frac14+\|(\r,u,\nabla n)\|_{H^1}+\|\nabla^2n\|_{L^2}^\frac12\le \frac14\min\{\mu, 3, P'(1)
\delta,  4C_1\delta\}
\end{equation*}
holds on for all $t \ge T_1$. Therefore, we obtain that
\begin{equation*}
\begin{aligned}
\frac{d}{dt}X_h(t)+\frac12\int(\mu|\nabla^3u|^2
+(\mu+\lambda)|\nabla^2\dive u|^2+|\nabla^4n|^2)dx+\frac{P'(1)\delta}{2}\|\nabla^2\r\|_{L^2}^2
\le \delta C_2(\|\nabla^2u\|_{L^2}^2+\|\nabla^3n\|_{L^2}^2),
\end{aligned}
\end{equation*}
which completes the proof of this lemma.
\end{proof}

\subsection{Cancellation of a low-medium-frequency part}

\quad
In this subsection, based on the second order energy estimate \eqref{e1}, we get $L^\infty_tL^2_x-$ norm estimate on $\nabla^2(\r, u, \nabla n)$ by removing the low-medium-frequency part of the term $\int\nabla u\cdot\nabla^2\r dx$.

\begin{lemm}\label{lemma1}
It holds that
\begin{equation*}
\begin{aligned}
\|\nabla^2(\r,u,\nabla n)(t)\|_{L^2}^2
\le Ce^{-C_4t}\|\nabla^2(\r,u,\nabla n)(T_1)\|_{L^2}^2
 +C\int_{T_1}^te^{-C_4(t-\tau)}\|\nabla^2(\r^L,u^L,\nabla n^L)(\tau)\|_{L^2}^2d\tau,
\end{aligned}
\end{equation*}
where the positive constant $C_4$ is independent of time, $T_1$ is the large time given in Lemma \ref{t1}.
\end{lemm}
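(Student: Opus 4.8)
The plan is to combine the second-order energy estimate \eqref{e1} from Lemma \ref{t1} with a new auxiliary estimate that isolates the low-medium-frequency part of the cross term $\int \nabla u\cdot\nabla^2\r\,dx$, and then close a Gr\"onwall-type differential inequality in which only the low-frequency piece of the solution appears as a forcing term. First I would establish an estimate for $\frac{d}{dt}\int \nabla u\cdot\nabla^2\r^L\,dx$, where $\r^L$ denotes the low-medium-frequency projection defined in Appendix \ref{appendixB}. Applying $\nabla$ to $\eqref{liqu1}_2$, projecting onto low frequencies, multiplying by $\nabla^2\r^L$ (or equivalently working with the equation for $\r^L$), and integrating by parts as in the proof of Lemma \ref{2rho}, one gets a bound of the form
\begin{equation*}
\Big|\frac{d}{dt}\int \nabla u\cdot\nabla^2\r^L\,dx\Big|\le C\|\nabla^2(\r^L,u^L,\nabla n^L)\|_{L^2}^2+C\|\nabla^2 u\|_{L^2}^2 + (\text{small coefficient})\cdot(\text{dissipation}),
\end{equation*}
the key point being that on the low-medium-frequency band the operator $\Lambda^s$ is bounded, so $\|\nabla^3 u^L\|_{L^2}\le C\|\nabla^2 u^L\|_{L^2}$ and $\|\nabla^2\r^L\|_{L^2}\le C\|\nabla\r^L\|_{L^2}$, which lets us trade a derivative freely and absorb the nonlinear contributions (which carry small decaying coefficients after time $T_1$, exactly as in Lemmas \ref{2un}--\ref{2rho}) into the left side of \eqref{e1}.

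Next I would form the modified energy functional $\widetilde X_h(t):=X_h(t)-\delta\int\nabla u\cdot\nabla^2\r^L\,dx$ and observe, as already noted in the introduction, that
\begin{equation*}
\widetilde X_h(t)=\tfrac12\|\nabla^2(\r,u,\nabla n)(t)\|_{L^2}^2+\delta\int\nabla u\cdot\nabla^2\r^h\,dx\sim \|\nabla^2(\r,u,\nabla n)(t)\|_{L^2}^2
\end{equation*}
for $\delta$ small, since on the high-frequency band $\|\nabla u\|_{L^2}\lesssim\|\nabla^2\r^h\|_{L^2}$-type comparisons keep the cross term subordinate to the square terms. Subtracting the new auxiliary estimate (multiplied by $\delta$) from \eqref{e1} cancels the offending $\int\nabla u\cdot\nabla^2\r\,dx$ part that was pinned to the slower first-order rate, and the dissipation terms $\|\nabla^3 u\|_{L^2}^2$, $\|\nabla^2\dive u\|_{L^2}^2$, $\|\nabla^4 n\|_{L^2}^2$, $\|\nabla^2\r\|_{L^2}^2$ on the left now dominate the full $\|\nabla^2(\r,u,\nabla n)\|_{L^2}^2$ up to the harmless low-frequency remainder (since $\|\nabla^2 u\|_{L^2}^2\le\|\nabla^2 u^L\|_{L^2}^2+\|\nabla^3 u\|_{L^2}^2$ and similarly for $n$, again using the frequency splitting). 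This yields
\begin{equation*}
\frac{d}{dt}\widetilde X_h(t)+C_4\widetilde X_h(t)\le C\|\nabla^2(\r^L,u^L,\nabla n^L)(t)\|_{L^2}^2,\qquad t\ge T_1.
\end{equation*}

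Finally I would apply Gr\"onwall's inequality (Duhamel's principle) to this differential inequality on $[T_1,t]$ and use $\widetilde X_h\sim\|\nabla^2(\r,u,\nabla n)\|_{L^2}^2$ on both ends to obtain exactly the claimed bound
\begin{equation*}
\|\nabla^2(\r,u,\nabla n)(t)\|_{L^2}^2\le Ce^{-C_4 t}\|\nabla^2(\r,u,\nabla n)(T_1)\|_{L^2}^2+C\int_{T_1}^t e^{-C_4(t-\tau)}\|\nabla^2(\r^L,u^L,\nabla n^L)(\tau)\|_{L^2}^2\,d\tau.
\end{equation*}
The main obstacle I anticipate is the careful derivation and control of the new term $\frac{d}{dt}\int\nabla u\cdot\nabla^2\r^L\,dx$: one must check that projecting the momentum equation to low-medium frequencies does not destroy the structural cancellation used in Lemma \ref{2rho} (in particular the identity $\int\nabla\dive u\cdot\nabla\r_t\,dx$ step, where $\r_t=-\dive u+S_1$ must be replaced by its low-frequency analogue), and that every nonlinear commutator-type term produced by the projection either carries a genuinely small time-decaying coefficient for $t\ge T_1$ or is a pure low-frequency quantity; verifying that the coefficient $\delta$ can be chosen uniformly (independent of $T_1$) so that $\widetilde X_h$ stays equivalent to $\|\nabla^2(\r,u,\nabla n)\|_{L^2}^2$ while still absorbing all error terms is the delicate bookkeeping point.
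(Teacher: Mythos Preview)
Your proposal is correct and follows essentially the same route as the paper: derive an estimate for $-\frac{d}{dt}\int\nabla u\cdot\nabla^2\r^L\,dx$ by testing $\nabla$ of the momentum equation against $\nabla^2\r^L$ and using $\r_t^L=-\dive u^L+S_1^L$, add $\delta$ times this to \eqref{e1}, use the high-frequency inequality $\|\nabla^3 u\|_{L^2}^2\gtrsim R_0^2\|\nabla^2 u^h\|_{L^2}^2$ (and the analogue for $n$) plus adding $\|\nabla^2(u^L,\nabla n^L)\|_{L^2}^2$ to both sides to close the differential inequality, verify $\widetilde X_h\sim\|\nabla^2(\r,u,\nabla n)\|_{L^2}^2$ via $\int\nabla u\cdot\nabla^2\r^h\,dx=-\int\nabla\dive u\cdot\nabla\r^h\,dx$ and $\|\nabla\r^h\|_{L^2}\lesssim\|\nabla^2\r\|_{L^2}$, and finish with Gr\"onwall. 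Your anticipated obstacle (that the low-frequency projection preserves the structural identity from Lemma \ref{2rho}) is exactly what the paper checks, and it goes through cleanly because the projector is a self-adjoint Fourier multiplier commuting with derivatives.
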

\begin{proof}
Applying $\nabla$ on the second equation of \eqref{liqu}, then taking $L^2$ inner product with $\nabla^2\r^L$ (see \eqref{decom}), we obtain that
\begin{equation}\label{y1}
\begin{split}
\int\nabla u_t\cdot\nabla^2\r^Ldx
=\int(\mu\Delta\nabla u+(\mu+\lambda)\nabla^2\dive u)\cdot\nabla^2\r^Ldx-P'(1)\int\nabla^2\r\cdot\nabla^2\r^Ldx+\int\nabla S_2\cdot\nabla^2\r^L.
\end{split}
\end{equation}
Integrating by part and using $\eqref{liqu1}_1$, we get
\begin{equation}\label{y2}
\begin{aligned}
\int\nabla u_t\cdot\nabla^2\r^Ldx&=\frac{d}{dt}\int\nabla u\cdot\nabla^2\r^Ldx-\int\nabla u\cdot\nabla^2\r^L_tdx\\&
=\frac{d}{dt}\int\nabla u\cdot\nabla^2\r^Ldx+\int\nabla \dive u\cdot\nabla\r^L_tdx\\&
=\frac{d}{dt}\int\nabla u\cdot\nabla^2\r^Ldx+\int\nabla\dive u\cdot(\nabla S_1^L-\nabla\dive u^L)dx.
\end{aligned}
\end{equation}
Therefore, substituting \eqref{y2} into \eqref{y1}, and using the H\"{o}lder and Cauchy inequalities, we get
\begin{equation*}
\begin{aligned}
-\frac{d}{dt}\int\nabla u\cdot\nabla^2\r^Ldx&\le \frac{\mu}{2}\|\nabla^3u\|_{L^2}^2+\frac{\mu+\lambda}{2}\|\nabla^2\dive u\|_{L^2}^2+\frac{P'(1)}{4}\|\nabla^2\r\|_{L^2}^2
+\frac12\|\nabla S_1^L\|_{L^2}^2+\frac12\|\nabla S_2\|_{L^2}^2\\&
\quad +\|\nabla\dive u\|_{L^2}^2+\frac12\|\nabla\dive u^L\|_{L^2}^2+C\|\nabla^2\r^L\|_{L^2}^2.
\end{aligned}
\end{equation*}
By routine calculation, it is easy to see that
\begin{equation}\label{y3}
\begin{aligned}
\|\nabla S_1\|_{L^2}&\le \|\nabla\r\|_{L^3}\|\dive u\|_{L^6}+\|\r\|_{L^\infty}\|\nabla\dive u\|_{L^2}+\|u\|_{L^\infty}\|\nabla^2\r\|_{L^2}+\|\nabla\r\|_{L^3}\|\nabla u\|_{L^6}\\&
\le C(\|\nabla\r\|_{L^3}+\|\r\|_{L^\infty}+\|u\|_{L^\infty})(\|\nabla^2u\|_{L^2}+\|\nabla^2\r\|_{L^2})\\&
\le C(\|\r\|_{L^2}^{\frac14}+\|u\|_{L^2}^{\frac14})(\|\nabla^2u\|_{L^2}+\|\nabla^2\r\|_{L^2}),
\end{aligned}
\end{equation}
where we used \eqref{b4} in the last inequality. According to the frequency decomposition \eqref{decom}, and using Lemma \ref{A1}, we have
\begin{equation*}
\|\nabla S_1^L\|_{L^2}\le \|\nabla S_1\|_{L^2}+\|\nabla S_1^h\|_{L^2}\le C\|\nabla S_1\|_{L^2},
\end{equation*}
this together with \eqref{d2} and \eqref{y3} gives rise to
\begin{equation*}
\|\nabla S_1^L\|_{L^2}^2+\|\nabla S_2\|_{L^2}^2\le C(\|(\r, u, \nabla n)\|_{L^2}^{\frac12}+\|(u,\nabla n)\|_{H^1}^2)(\|\nabla^2u\|_{H^1}^2+\|\nabla^2\r\|_{L^2}^2+\|\nabla^3n\|_{H^1}^2).
\end{equation*}
Then we have
\begin{equation}\label{e2}
\begin{aligned}
&-\frac{d}{dt}\int\nabla u\cdot\nabla^2\r^Ldx\\&
\le \frac{\mu}{2}\|\nabla^3u\|_{L^2}^2+\frac{\mu+\lambda}{2}\|\nabla^2\dive u\|_{L^2}^2+\frac{P'(1)}{4}\|\nabla^2\r\|_{L^2}^2+\|\nabla\dive u\|_{L^2}^2\\&
\quad +\frac12\|\nabla\dive u^L\|_{L^2}^2+C\|\nabla^2\r^L\|_{L^2}^2\\&
\quad +C(\|(\r, u, \nabla n)\|_{L^2}^{\frac12}+\|(u,\nabla n)\|_{H^1}^2)(\|\nabla^2u\|_{H^1}^2+\|\nabla^2\r\|_{L^2}^2+\|\nabla^3n\|_{H^1}^2).
\end{aligned}
\end{equation}
Adding \eqref{e1} and $\delta\times\eqref{e2}$, and using the uniform estimate \eqref{uni}, we get
\begin{equation}\label{e3}
\begin{aligned}
&\frac{d}{dt}\big(X_h(t)-\delta\int\nabla u\cdot\nabla^2\r^Ldx\big)+\frac12\int(\mu|\nabla^3u|^2+(\mu+\lambda)|\nabla^2\dive u|^2+|\nabla^4n|^2)dx\\&
\quad +\frac{P'(1)\delta}{2}\|\nabla^2\r\|_{L^2}^2\\&
\le \delta C_2(\|\nabla^2u\|_{L^2}^2+\|\nabla^3n\|_{L^2}^2)+
\frac{\mu}{2}\delta\|\nabla^3u\|_{L^2}^2+\frac{\mu+\lambda}{2}\delta\|\nabla^2\dive u\|_{L^2}^2\\&
\quad+\frac{P'(1)}{4}\delta\|\nabla^2\r\|_{L^2}^2+\delta\|\nabla\dive u\|_{L^2}^2 +\frac12\delta\|\nabla\dive u^L\|_{L^2}^2+C\delta\|\nabla^2\r^L\|_{L^2}^2\\&
\quad+\delta C_3(\|\nabla^2u\|_{H^1}^2+\|\nabla^2\r\|_{L^2}^2+\|\nabla^3n\|_{H^1}^2).
\end{aligned}
\end{equation}
Using Lemma \ref{A1}, we have
\begin{equation}\label{e4}
\begin{aligned}
\frac{\mu}{2}\|\nabla^3u\|_{L^2}^2+\frac12\|\nabla^4n\|_{L^2}^2
\ge \frac{\mu}{4}R_0^2\|\nabla^2u^h\|_{L^2}^2
+\frac{\mu}{4}\|\nabla^3u\|_{L^2}^2+\frac14R^2_0\|\nabla^3n^h\|_{L^2}^2
+\frac14\|\nabla^4n\|_{L^2}^2.
\end{aligned}
\end{equation}
Substituting \eqref{e4} into \eqref{e3}, and adding $ \frac{\mu}{4}R_0^2\|\nabla^2u^L\|_{L^2}^2+\frac14R^2_0\|\nabla^3n^L\|_{L^2}^2$ on both side of the resulting inequality, we get
\begin{equation*}
\begin{aligned}
&\frac{d}{dt}\big(X_h(t)-\delta\int\nabla u\cdot\nabla^2\r^Ldx\big)+\frac{\mu}{8}R^2_0\|\nabla^2u\|_{L^2}^2+\frac{\mu}{4}\|\nabla^3u\|_{L^2}^2+\frac{\mu+\lambda}{2}\|\nabla^2\dive u\|_{L^2}^2\\&
\quad +\frac{R^2_0}{8}\|\nabla^3m\|_{L^2}^2+\frac14\|\nabla^4m\|_{L^2}^2+\frac{P'(1)\delta}{2}\|\nabla^2\r\|_{L^2}^2\\&
\le \delta C_2(\|\nabla^2u\|_{L^2}^2+\|\nabla^3n\|_{L^2}^2)+
\frac{\mu}{2}\delta\|\nabla^3u\|_{L^2}^2+\frac{\mu+\lambda}{2}\delta\|\nabla^2\dive u\|_{L^2}^2\\&
\quad+\frac{P'(1)}{4}\delta\|\nabla^2\r\|_{L^2}^2+\delta\|\nabla\dive u\|_{L^2}^2
+\frac12\delta\|\nabla\dive u^L\|_{L^2}^2+C\delta\|\nabla^2\r^L\|_{L^2}^2\\&
\quad+\delta C_3(\|\nabla^2u\|_{H^1}^2+\|\nabla^2\r\|_{L^2}^2+\|\nabla^3n\|_{H^1}^2).
\end{aligned}
\end{equation*}
Choosing $\delta\le\min\{\frac18,\frac{\mu}{16C_3},\frac{1}{8C_3}\}$, $R_0^2\ge \max\{\frac{6C_2}{\mu},\frac{3}{\mu},\frac{6C_3}{\mu},4C_2,4C_3\}$, then we have
\begin{equation}\label{e5}
\begin{aligned}
&\frac{d}{dt}\big(X_h(t)-\delta\int\nabla u\cdot\nabla^2\r^Ldx\big)+\frac{\mu}{16}R^2_0\|\nabla^2u\|_{L^2}^2+\frac{\mu}{8}\|\nabla^3u\|_{L^2}^2+\frac{\mu+\lambda}{4}\|\nabla^2\dive u\|_{L^2}^2\\&
\quad +\frac{R^2_0}{16}\|\nabla^3n\|_{L^2}^2+\frac18\|\nabla^4n\|_{L^2}^2+\frac{P'(1)\delta}{4}\|\nabla^2\r\|_{L^2}^2\\&
\le C\|\nabla^2(\r^L,u^L,\nabla n^L)\|_{L^2}^2.
\end{aligned}
\end{equation}
According to the decomposition \eqref{decom}, we have
\begin{equation*}
\begin{aligned}
X_h(t)-\delta\int\nabla u\cdot\nabla^2\r^Ldx
=\frac12(\|\nabla^2\r\|_{L^2}^2+\|\nabla^2u\|_{L^2}^2+\|\nabla^3n\|_{L^2}^2)+\delta\int\nabla u\cdot\nabla^2\r^hdx.
\end{aligned}
\end{equation*}
Integrating by part, and using Lemma \ref{A1}, we have
\begin{equation*}
\begin{aligned}
\delta\int\nabla u\cdot\nabla^2\r^hdx
=-\delta\int\nabla\dive u\cdot\nabla\r^hdx
\le \frac{\delta}{2}\|\nabla\r^h\|_{L^2}^2+\frac{\delta}{2}\|\nabla\dive u\|_{L^2}^2
\le  \frac{\delta}{2}\|\nabla^2\r\|_{L^2}^2+\frac{\delta}{2}\|\nabla\dive u\|_{L^2}^2,
\end{aligned}
\end{equation*}
which implies that
\begin{equation}\label{e6}
X_h(t)-\delta\int\nabla u\cdot\nabla^2\r^Ldx\sim\|\nabla^2(\r,u,\nabla n)\|_{L^2}^2,
\end{equation}
where we have used the fact that $0<\delta\le \frac18$. Thanks to \eqref{e5} and \eqref{e6}, there exists a constant $C_4$ such that
\begin{equation}\label{e7}
\frac{d}{dt}\big(X_h(t)-\delta\int\nabla u\cdot\nabla^2\r^Ldx\big)+C_4\big(X_h(t)-\delta\int\nabla u\cdot\nabla^2\r^Ldx\big)\le C\|\nabla^2(\r^L,u^L,\nabla n^L)\|_{L^2}^2.
\end{equation}
Multiplying \eqref{e7} by $e^{C_4t}$ and integrating
with respect to time over $[T_1,t]$, we get
\begin{equation*}
\begin{aligned}
&\quad X_h(t)-\delta\int\nabla u\cdot\nabla^2\r^Ldx\\
&\le e^{-C_4t}\big(X_h(T_1)-\delta\int\nabla u(T_1)\cdot\nabla^2\r^L(T_1)dx\big)
     +C\int_{T_1}^te^{-C_4(t-\tau)}\|\nabla^2(\r^L,u^L,\nabla n^L)(\tau)\|_{L^2}^2d\tau.
\end{aligned}
\end{equation*}
Using the equivalent equation \eqref{e6} again, we complete the proof of this lemma.
\end{proof}
\subsection{Decay estimates of the low-medium-frequency part}

\quad
In this subsection, based on the classical semigroup method and the $L^2-$ norm decay estimate for spectral analysis on the linearized system, we obtain the estimate of the low-medium-frequency part of the solution to the Cauchy problem \eqref{liqu1}.
In order to get the decay estimate of $\|\nabla n\|_{H^2}$, we applying $\nabla$ operator to the third equation of system \eqref{liqu1}, then \eqref{liqu1} becomes
\begin{equation}\label{liqu0}
  	\left\{\begin{aligned}
	&\p_t\r+\dive u=S_1,\\
	&\p_t u-\mu\Delta u-(\mu+\lambda)\nabla\dive u+P'(1)\nabla\r=S_2,\\
	&\p_t \nabla n-\Delta \nabla n=\nabla S_3,
    	\end{aligned}\right.
  \end{equation}
Denote
\begin{equation*}
\mathbb U(t):=(\r(t),u(t), \nabla n(t))^T,
\end{equation*}
and the differential operator $\mathbb G$:
\begin{gather*}
\mathbb G=
\begin{pmatrix}
0            & \dive  & 0 \\
-P'(1)\nabla & -\mu\Delta-(\mu+\lambda)\nabla\dive & 0\\
0 &  0 & -\Delta
\end{pmatrix}
\end{gather*}
Then we can rewrite the system \eqref{liqu0} as
\begin{equation}\label{eq2}
  	\left\{\begin{aligned}
	&\p_t\mathbb U+\mathbb G\mathbb U=S(\mathbb U),\\
	&\mathbb U|_{t=0}=\mathbb U(0),
    	\end{aligned}\right.
  \end{equation}
  where
  \begin{equation}\label{eq3}
  S(\mathbb U):=(S_1,S_2,\nabla S_3)^T, \quad \mathbb U(0):=(\r_0, u_0, \nabla n_0).
  \end{equation}
Moreover, we define
\begin{equation*}
\overline{\mathbb U}(t):=(\bar\r(t),\bar u(t),\nabla \bar n(t))^T,
\end{equation*}
then we have the following corresponding linearized problem
\begin{equation}\label{eq1}
  	\left\{\begin{aligned}
	&\p_t\overline{\mathbb U}+\mathbb G\overline{\mathbb U}=0,\\
	&\overline{\mathbb U}|_{t=0}=\mathbb U(0).
    	\end{aligned}\right.
  \end{equation}
  Taking the Fourier transform on \eqref{eq1} with respect to space variable and solving the ODE, we get
  \begin{equation*}
  \overline{\mathbb U}(t)=\mathcal G(t)\mathbb U (0),
  \end{equation*}
  where $\mathcal G(t)=e^{-t\mathbb G}(t\ge0)$ is the semigroup generated by the operator $\mathbb G$ and $\mathcal G(t)f:=\mathcal F^{-1}(e^{-t\mathbb G_\xi}\hat f(\xi))$ with
  \begin{gather*}
\mathbb G_\xi=
\begin{pmatrix}
0            & i\xi^T  & 0 \\
i\xi & \mu|\xi|^2\delta_{ij}+(\mu+\lambda)\xi_i\xi_j & 0\\
0 &  0 & |\xi|^2
\end{pmatrix}
\end{gather*}

Next, according to the decay estimate of solution to the linearized system \eqref{eq1} in frequency regimes (see Appendix \ref{appendixA}), we give the following estimate of the low-medium-frequency part of the solution.
\begin{lemm}
Assume $1\le p\le 2$, for any integer $k\ge0$, there holds
\begin{equation*}
\|\nabla^k(\mathcal G(t)\mathbb U^L(0))\|_{L^2}\le C(1+t)^{-\frac32(\frac1p-\frac12)-\frac k2}\|\mathbb U(0)\|_{L^p}.
\end{equation*}
\end{lemm}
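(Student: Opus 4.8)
The plan is to reduce the estimate on $\mathcal G(t)\mathbb U^L(0)$ to a pointwise-in-frequency bound on the symbol $e^{-t\mathbb G_\xi}$ restricted to the low-medium-frequency zone $\{|\xi|\le R_0\}$, and then to close with the Hausdorff--Young and Plancherel inequalities exactly as in the classical semigroup approach of Matsumura--Nishida. First I would write, using Plancherel's theorem,
\begin{equation*}
\|\nabla^k(\mathcal G(t)\mathbb U^L(0))\|_{L^2}^2
= \int_{|\xi|\le R_0} |\xi|^{2k}\, |e^{-t\mathbb G_\xi}\,\widehat{\mathbb U}(0)(\xi)|^2\, d\xi .
\end{equation*}
The system decouples: the $\nabla n$ component solves the pure heat equation, so its symbol contributes $e^{-2t|\xi|^2}$, while the $(\r,u)$ block is the standard compressible Navier--Stokes linearization, whose symbol on the low-frequency regime satisfies $|e^{-t\mathbb G_\xi}| \le C e^{-c|\xi|^2 t}$ for $|\xi|\le R_0$ (this is precisely the content of the spectral/resolvent analysis collected in Appendix A, which I am allowed to invoke). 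Hence on the whole zone $|\xi|\le R_0$ one has the uniform bound $|e^{-t\mathbb G_\xi}| \le C e^{-c|\xi|^2 t}$.

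Next I would insert this bound and separate out the data: since $|\widehat{\mathbb U}(0)(\xi)| \le \|\mathbb U(0)\|_{L^1}$ pointwise when $p=1$, and more generally $\|\widehat{\mathbb U}(0)\|_{L^{p'}(\mathbb R^3)} \le \|\mathbb U(0)\|_{L^p}$ by Hausdorff--Young for $1\le p\le 2$, I would apply H\"older's inequality in $\xi$ on the ball $\{|\xi|\le R_0\}$ with exponents $p'/2$ and its conjugate to get
\begin{equation*}
\|\nabla^k(\mathcal G(t)\mathbb U^L(0))\|_{L^2}^2
\le C\,\|\mathbb U(0)\|_{L^p}^2 \left( \int_{|\xi|\le R_0} |\xi|^{2k\cdot r}\, e^{-c r |\xi|^2 t}\, d\xi \right)^{1/r},
\end{equation*}
where $r = (p'/2)' = p/(2-p)$ is the conjugate exponent (with the obvious interpretation $r=1$ when $p=1$). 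A change of variables $\eta = \sqrt{t}\,\xi$ in the remaining integral produces the factor $t^{-\frac{3}{2r}-k} = t^{-\frac{3}{2}(\frac1p-\frac12)-k}$ after simplifying $\frac{1}{2r} = \frac12(\frac1p - \frac12)$, and the truncation at $|\xi| \le R_0$ together with the Gaussian decay makes the $t\to 0$ behaviour harmless, so one replaces $t$ by $1+t$ in the standard way. Taking square roots yields the claimed rate $(1+t)^{-\frac32(\frac1p-\frac12)-\frac k2}$.

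The main obstacle — and the only genuinely nontrivial input — is the uniform low-frequency symbol bound $|e^{-t\mathbb G_\xi}|\le C e^{-c|\xi|^2 t}$ for the compressible Navier--Stokes block. This requires diagonalizing (or at least estimating the exponential of) the $4\times 4$ matrix $\mathbb G_\xi$ after projecting onto the divergence-free and curl-free parts of $u$: the incompressible part of $u$ again satisfies a heat equation with symbol $e^{-\mu|\xi|^2 t}$, while the coupled $(\r,\dive u)$ system has eigenvalues whose real parts behave like $-c|\xi|^2$ for $|\xi|$ small, with a possible algebraic (non-diagonalizable) degeneracy at isolated frequencies that only costs polynomial-in-$|\xi|$ factors absorbable into the exponential on a bounded zone. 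Since this spectral computation is exactly what Appendix A supplies, in the main text I would simply cite it and carry out the elementary Fourier-analytic reduction above.
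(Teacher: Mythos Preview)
Your approach is correct and essentially the same as the paper's: the paper likewise uses Plancherel, invokes the Appendix~A symbol bounds (splitting $|\xi|\le r_0$ with Gaussian decay and $r_0<|\xi|\le R_0$ with plain exponential decay, which you fold into a single bound $e^{-c|\xi|^2 t}$ on the whole zone), and closes with H\"older plus Hausdorff--Young. One arithmetic slip to fix: with $r=p/(2-p)$ one has $\tfrac{1}{2r}=\tfrac{1}{p}-\tfrac{1}{2}$ (not $\tfrac12(\tfrac1p-\tfrac12)$), so the squared quantity decays like $t^{-3(\tfrac1p-\tfrac12)-k}$ and the square root then gives the stated rate.
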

\begin{proof}
Set $b:=\Lambda^{-1}\dive u$ be the ``compressible part", $\mathcal Pu:=\Lambda^{-1}\text{curl} u$ be the ``incompressible part", where $\Lambda^2=\Delta$, then $u=-\Lambda^{-1}\nabla b-\Lambda^{-1}\dive\mathcal Pu$.
Using the Plancherel theorem,  \eqref{g1} and \eqref{g2}, we obtain that
\begin{equation*}
\begin{aligned}
\|\p_x^\alpha(\bar\r^L,\bar b^L, \nabla\bar n^L)(t)\|_{L^2}^2&=\|(i\xi)^\alpha(\bar\r^L,\bar b^L,  \nabla \bar n^L)\|_{L^2_\xi}^2\\&
=\int\Big|(i\xi)^\alpha(\widehat{\bar\r^L},\widehat{\bar b^L},\widehat{\nabla \bar n^L})(t,\xi)\Big|^2d\xi\\&
\le C\int_{|\xi|\le R_0}|\xi|^{2\alpha}|(\widehat{\bar\r},\widehat{\bar b},\widehat{\nabla\bar n})(t,\xi)|^2d\xi\\&
\le C\int_{|\xi|\le r_0}|\xi|^{2\alpha}e^{-C_5|\xi|^2t}|(\widehat{\bar\r},\widehat{\bar b},\widehat{\nabla\bar n})(0,\xi)|^2d\xi\\&
\quad +\int_{r_0< |\xi|\le R_0}|\xi|^{2\alpha}e^{-\kappa t}|(\widehat{\bar\r},\widehat{\bar b},\widehat{\nabla\bar n})(0,\xi)|^2d\xi.
\end{aligned}
\end{equation*}
Using the H\"{o}lder, Hausdorff-Young inequalities, we have
\begin{equation}\label{g6}
\begin{aligned}
\|\p_x^\alpha(\bar\r^L,\bar b^L,\nabla\bar n^L)(t)\|_{L^2}
\le C\|(\hat\r,\hat b,\widehat{\nabla n})(0)\|_{L^q_\xi}
    (1+t)^{-\frac32(\frac12-\frac1q)-\frac{|\alpha|}{2}}
\le C\|(\r, b,\nabla n)(0)\|_{L^p}(1+t)^{-\frac32(\frac1p-\frac12)-\frac{|\alpha|}{2}},
\end{aligned}
\end{equation}
where $1\le p\le 2\le q\le +\infty$, $\frac1p+\frac1q=1$.
Similarly, according to \eqref{g3}, we have
\begin{equation}\label{g5}
\begin{aligned}
\|\p_x^\alpha(\overline{\mathcal P u})^L(t)\|_{L^2}
&\le C\Big(\int_{|\xi|\le R_0}|\xi|^{2\alpha}|\widehat{\mathcal Pu}(t,\xi)|^2d\xi\Big)^\frac12
\le C\Big(\int_{|\xi|\le R_0}e^{-2\mu|\xi|^2t}|\widehat{\mathcal Pu}(0,\xi)|^2d\xi\Big)\\
&\le C\|u_0\|_{L^p}(1+t)^{-\frac32(\frac1p-\frac12)-\frac{|\alpha|}{2}}.
\end{aligned}
\end{equation}
The combination of \eqref{g6} and \eqref{g5} completes the proof of this lemma.
\end{proof}

Next, we establish the decay estimates of the solution to the nonlinear problem \eqref{eq2}-\eqref{eq3}. According to Duhamel principle, we rewrite the solution of system \eqref{eq2} as follows
\begin{equation*}
\mathbb U(t)=\mathcal G(t)\mathbb U(0)+\int_0^t\mathcal G(t-\tau)S(\mathbb U)(\tau)d\tau.
\end{equation*}
Then we get the following estimates on the low-medium-frequency part of the solution to the nonlinear problem \eqref{eq2}-\eqref{eq3}.
\begin{lemm}\label{low-me}
For any integer $k\ge 0$, it holds true
\begin{equation*}
\begin{aligned}
\|\nabla^k\mathbb U^L(t)\|_{L^2}
&\le C_6(1+t)^{-\frac34-\frac k2}\|\mathbb U(0)\|_{L^1}
+C_6\int_0^{\frac t2}(1+t-\tau)^{-\frac34-\frac k2}\|S(\mathbb U)(\tau)\|_{L^1}d\tau\\
&+C_6\int_{\frac t2}^t(1+t-\tau)^{-\frac k2}\|S(\mathbb U)(\tau)\|_{L^2}d\tau,
\end{aligned}
\end{equation*}
where the positive constant $C_6$ independent of time.
\end{lemm}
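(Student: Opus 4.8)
The plan is to combine Duhamel's formula for the nonlinear system \eqref{eq2} with the linear decay estimate of the preceding lemma. Starting from
\begin{equation*}
\mathbb U(t)=\mathcal G(t)\mathbb U(0)+\int_0^t\mathcal G(t-\tau)S(\mathbb U)(\tau)\,d\tau,
\end{equation*}
I would apply $\nabla^k$ together with the low-medium-frequency cut-off $(\cdot)^L$. Since $(\cdot)^L$ is a Fourier multiplier and $\mathcal G(t)=e^{-t\mathbb G}$ acts diagonally in the frequency variable, the two operators commute with each other and with $\nabla^k$, so that
\begin{equation*}
\nabla^k\mathbb U^L(t)=\nabla^k\big(\mathcal G(t)\mathbb U^L(0)\big)+\int_0^t\nabla^k\mathcal G(t-\tau)\big(S(\mathbb U)(\tau)\big)^L\,d\tau .
\end{equation*}

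For the linear term I would apply the previous lemma with $p=1$, which gives $\|\nabla^k(\mathcal G(t)\mathbb U^L(0))\|_{L^2}\le C(1+t)^{-\frac34-\frac k2}\|\mathbb U(0)\|_{L^1}$, i.e. precisely the first term in the claimed bound. For the Duhamel integral I split the time interval at $t/2$. On $[0,\frac t2]$, applying the previous lemma with $S(\mathbb U)(\tau)$ playing the role of the initial datum, $t-\tau$ the elapsed time, and $p=1$, yields $\|\nabla^k\mathcal G(t-\tau)(S(\mathbb U)(\tau))^L\|_{L^2}\le C(1+t-\tau)^{-\frac34-\frac k2}\|S(\mathbb U)(\tau)\|_{L^1}$, and integrating over $\tau\in[0,\frac t2]$ produces the second term. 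On $[\frac t2,t]$, where $t-\tau$ is small, I would instead use the lemma with $p=2$: then $\frac32(\frac12-\frac12)=0$, so $\|\nabla^k\mathcal G(t-\tau)(S(\mathbb U)(\tau))^L\|_{L^2}\le C(1+t-\tau)^{-\frac k2}\|S(\mathbb U)(\tau)\|_{L^2}$, and integrating over $\tau\in[\frac t2,t]$ gives the third term. Summing the three pieces and renaming the finitely many time-independent constants as $C_6$ completes the argument.

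I do not expect any genuine obstacle here: the statement is a purely linear representation estimate and follows by bookkeeping once the decay bounds of the preceding lemma are in hand. The only points deserving a line of justification are the commutation of $(\cdot)^L$ with $\mathcal G(t)$ and $\nabla^k$ (immediate, since all three are Fourier multipliers), and the correct choice of the exponent $p$ on each subinterval — $p=1$ where the elapsed time $t-\tau$ is comparable to $t$ so that the extra decay factor $(1+t-\tau)^{-\frac34}$ is available, and $p=2$ near $\tau=t$ where only the $L^2\to L^2$ bound can be used. No integrability of the source $S(\mathbb U)$ is needed at this stage; the actual $L^1$- and $L^2$-estimates of the nonlinear terms are carried out afterwards.
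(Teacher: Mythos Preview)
Your proposal is correct and is precisely the argument the paper has in mind: the lemma is stated in the paper immediately after the Duhamel representation and the linear decay estimate, without an explicit proof, so the intended justification is exactly the one you wrote --- apply the preceding lemma with $p=1$ to the linear piece and to the Duhamel integrand on $[0,t/2]$, and with $p=2$ on $[t/2,t]$. There is nothing to add.
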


\subsection{Decay rate for the nonlinear system}

\quad
In this subsection, we will establish the time decay rate of
the solution to the original nonlinear problem \eqref{liqu0}.
\begin{lemm}\label{lemm28}
Under the assumptions of Theorem \ref{th1}, there exists a positive constant $T_2$, such that the global solution $(\r,u,n)$ of Cauchy problem \eqref{liqu1} has the estimate
\begin{equation}\label{decay1}
\|\nabla^k(\r,u,\nabla n)(t)\|_{L^2}
\le C(1+t)^{-\frac34-\frac k2}, \quad k=0,1,2,
\end{equation}
for all $t\ge T_2$. Here $C$ is a positive constant independent of time.
\end{lemm}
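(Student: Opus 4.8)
The plan is to close the decay by a bootstrap coupling three facts already established: the frequency-cancelled energy inequality (Lemma \ref{lemma1}), the linearized low-medium-frequency decay (Lemma \ref{low-me}), and the $H^1$-decay \eqref{decay}. Recall $\mathbb U=(\r,u,\nabla n)$, so that $\|\nabla^k\mathbb U(t)\|_{L^2}=\|\nabla^k(\r,u,\nabla n)(t)\|_{L^2}$, and recall from Lemma \ref{A1} the Bernstein-type bounds $\|\nabla^2\mathbb U^L\|_{L^2}\le C\|\mathbb U\|_{L^2}$ and $\|\nabla\mathbb U^h\|_{L^2}\le C\|\nabla^2\mathbb U\|_{L^2}$. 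First I would record a crude second-order rate: inserting the trivial bound $\|\nabla^2\mathbb U^L(\tau)\|_{L^2}\le C\|\mathbb U(\tau)\|_{L^2}\le C(1+\tau)^{-3/4}$ (from \eqref{decay}) into Lemma \ref{lemma1} and using $\int_{T_1}^t e^{-C_4(t-\tau)}(1+\tau)^{-3/2}\,d\tau\le C(1+t)^{-3/2}$ gives $\|\nabla^2\mathbb U(t)\|_{L^2}\le C(1+t)^{-3/4}$ for $t\ge T_1$, hence for all $t\ge0$ after enlarging the constant via \eqref{uni}.

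This crude rate is exactly what is needed to control the source. Using \eqref{decay}, \eqref{uni}, the crude bound just obtained, and the Sobolev--H\"older inequalities already employed to prove Lemma \ref{2un} (cf.\ \eqref{d1}, \eqref{d2} and the estimates for $\nabla S_3$), one checks $\|S(\mathbb U)(\tau)\|_{L^1}\le C(1+\tau)^{-3/2}$, which is integrable in $\tau$, and $\|S(\mathbb U)(\tau)\|_{L^2}\le C(1+\tau)^{-3/16}\|\nabla^2\mathbb U(\tau)\|_{L^2}$, the slowest prefactor being $\|\r\|_{L^2}^{1/4}$. Now set $\mathcal N(t):=\sup_{0\le\tau\le t}(1+\tau)^{7/4}\|\nabla^2\mathbb U(\tau)\|_{L^2}$. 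In Lemma \ref{low-me} with $k=2$, the linear term and the $L^1$-integral give $\le C(1+t)^{-7/4}$ (using $\int_0^\infty\|S(\mathbb U)\|_{L^1}\,d\tau<\infty$), while the $L^2$-integral is $\le C\mathcal N(t)\int_{t/2}^t(1+t-\tau)^{-1}(1+\tau)^{-31/16}\,d\tau\le C\mathcal N(t)(1+t)^{-31/16}\ln(1+t)$, so
\[\|\nabla^2\mathbb U^L(\tau)\|_{L^2}^2\le C(1+\tau)^{-7/2}+C\mathcal N(\tau)^2(1+\tau)^{-31/8}\ln^2(1+\tau).\]
Feeding this into Lemma \ref{lemma1}, using $\int_{T_1}^t e^{-C_4(t-\tau)}(1+\tau)^{-a}\,d\tau\le C(1+t)^{-a}$ and the monotonicity of $\mathcal N$, and multiplying by $(1+t)^{7/2}$ gives $(1+t)^{7/2}\|\nabla^2\mathbb U(t)\|_{L^2}^2\le C+C\mathcal N(t)^2(1+t)^{-3/8}\ln^2(1+t)$ for $t\ge T_1$. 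Since $(1+t)^{-3/8}\ln^2(1+t)\to0$, I would fix $T_2\ge T_1$ making the last term $\le\tfrac12\mathcal N(t)^2$ for $t\ge T_2$; combined with the uniform bound \eqref{uni} on $[0,T_2]$ this forces $\mathcal N(t)\le C$, i.e.\ $\|\nabla^2\mathbb U(t)\|_{L^2}\le C(1+t)^{-7/4}$.

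The case $k=0$ of \eqref{decay1} is \eqref{decay}. For $k=1$ I would split $\|\nabla\mathbb U\|_{L^2}\le\|\nabla\mathbb U^L\|_{L^2}+\|\nabla\mathbb U^h\|_{L^2}$: the high-frequency part is $\le C\|\nabla^2\mathbb U\|_{L^2}\le C(1+t)^{-7/4}$ by the previous paragraph, and the low-frequency part is handled by Lemma \ref{low-me} with $k=1$, where now $\|S(\mathbb U)(\tau)\|_{L^2}\le C(1+\tau)^{-31/16}$ makes the $L^2$-integral $\le C(1+t)^{-23/16}\le C(1+t)^{-5/4}$. Altogether $\|\nabla\mathbb U(t)\|_{L^2}\le C(1+t)^{-5/4}$, which completes \eqref{decay1} for $k=0,1,2$ with $T_2$ as above.

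The step I expect to be the main obstacle is the circular dependence between source and target: both $\|S(\mathbb U)\|_{L^1}$ and $\|S(\mathbb U)\|_{L^2}$ involve second-order derivatives of $\mathbb U$, whose decay is precisely the goal. The crude first step is needed to gain time-integrability of $\|S(\mathbb U)\|_{L^1}$ with a time-independent bound, and, more fundamentally, the low-medium-frequency cancellation of Lemma \ref{lemma1} is what makes the loop close: it replaces $\nabla^2\mathbb U$ on the right-hand side by its low-medium-frequency part $\nabla^2\mathbb U^L$, which --- unlike $\nabla^2\mathbb U$ itself --- inherits the fast heat-type decay through the Duhamel formula, producing the strictly decreasing error factor $(1+t)^{-3/8}\ln^2(1+t)$ that lets the bootstrap absorb itself. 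The rest is the routine, if slightly delicate, bookkeeping of the time-convolution integrals (with their harmless logarithmic losses) and the verification that every nonlinear prefactor decays, which follows directly from \eqref{decay} and \eqref{uni}.
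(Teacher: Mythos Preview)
Your proof is correct and uses the same core machinery as the paper --- Lemma \ref{lemma1}, the Duhamel estimate of Lemma \ref{low-me}, and a bootstrap on a weighted supremum --- but the execution differs in two places worth noting. First, you begin with a preliminary crude rate $\|\nabla^2\mathbb U(t)\|_{L^2}\le C(1+t)^{-3/4}$ obtained by feeding the trivial bound $\|\nabla^2\mathbb U^L\|_{L^2}\le C\|\mathbb U\|_{L^2}$ into Lemma \ref{lemma1}, and use it to control $\|\r\,\Delta u\|_{L^1}\le\|\r\|_{L^2}\|\nabla^2u\|_{L^2}\le C(1+\tau)^{-3/2}$ directly; the paper instead interpolates $\|\nabla^2u\|_{L^2}\le\|\nabla u\|_{L^2}^{1/2}\|\nabla^3u\|_{L^2}^{1/2}$ and relies on $\int_0^\infty\|\nabla^3u\|_{L^2}^2\,d\tau<\infty$ from \eqref{uni} (see \eqref{h3}, \eqref{i3}). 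Your crude step is cleaner and sidesteps that trick. Second, the paper bootstraps all three orders simultaneously through $M(t):=\sup_{\tau\le t}\sum_{l=0}^2(1+\tau)^{3/4+l/2}\|\nabla^l\mathbb U(\tau)\|_{L^2}$, whereas you bootstrap only the top-order quantity $\mathcal N(t)$ and recover $k=1$ afterwards by a low/high split; the paper's joint bootstrap (and the sharper interpolation $\|\r\|_{L^\infty}\le\|\nabla\r\|_{L^2}^{1/2}\|\nabla^2\r\|_{L^2}^{1/2}$) also yields the prefactor $(1+\tau)^{-3/8}$ in $\|S(\mathbb U)\|_{L^2}$ in place of your $(1+\tau)^{-3/16}$, so it avoids the harmless logarithmic loss you keep track of. Both variants close the loop.
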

\begin{proof}
Let us denote
\begin{equation}\label{i0}
M(t):=\sup_{0\le\tau\le t}\sum_{l=0}^2(1+\tau)^{\frac34+\frac l2}\|\nabla^l(\r,u,\nabla n)(\tau)\|_{L^2},
\end{equation}
and hence, we have for $0\le l\le 2$
\begin{equation}\label{h5}
\|\nabla^l(\r,u,\nabla n)(\tau)\|_{L^2}\le C(1+\tau)^{-\frac34-\frac l2}M(t), \quad 0\le\tau\le t.
\end{equation}
According to the definition of $S(\mathbb U)$, we get
\begin{equation}\label{h1}
\|S(\mathbb U)(\tau)\|_{L^1}\le \|S_1(\tau)\|_{L^1}+\|S_2(\tau)\|_{L^1}+\|\nabla S_3(\tau)\|_{L^1}.
\end{equation}
By routine checking, it is easy to see
\begin{equation}\label{h2}
\|S_1\|_{L^1}\le \|(\r, u)\|_{L^2}\|\nabla(\r,u)\|_{L^2}.
\end{equation}
By the H\"{o}lder and Sobolev inequalities, we have
\begin{equation}\label{h3}
\begin{aligned}
\|S_2\|_{L^1}&\le \|u\|_{L^2}\|\nabla u\|_{L^2}+\|\r\|_{L^2}\|\nabla^2u\|_{L^2}+\|\r\|_{L^2}\|\nabla\r\|_{L^2}+\|\nabla n\|_{L^2}\|\nabla^2n\|_{L^2}\\&
\le C\|(\r,u,\nabla n)\|_{L^2}\|\nabla(\r,u,\nabla n)\|_{L^2}+\|\r\|_{L^2}\|\nabla u\|_{L^2}^{\frac12}\|\nabla^3u\|_{L^2}^{\frac12},
\end{aligned}
\end{equation}
and
\begin{equation}\label{h4}
\begin{aligned}
\|\nabla S_3\|_{L^1}&\le \|\nabla u\|_{L^2}\|\nabla n\|_{L^2}+\|u\|_{L^2}\|\nabla^2n\|_{L^2}+\|\nabla n\|_{L^2}\|\nabla^2n\|_{L^2}(\|n\|_{L^\infty}+\underline d)\\&
\quad +\|\nabla n\|_{L^2}\|\nabla n\|_{L^3}\|\nabla n\|_{L^6}\\&
\le \|\nabla u\|_{L^2}\|\nabla n\|_{L^2}+\|u\|_{L^2}\|\nabla^2n\|_{L^2}+\|\nabla n\|_{L^2}^{\frac32}\|\nabla^2n\|_{L^2}^{\frac32}+\|\nabla n\|_{L^2}\|\nabla^2n\|_{L^2},
\end{aligned}
\end{equation}
where we used interpolation inequalities in the last inequality as follows
\begin{equation*}
\|n\|_{L^\infty}+\|\nabla n\|_{L^3}\le \|\nabla n\|_{L^2}^{\frac12}\|\nabla^2n\|_{L^2}^{\frac12}.
\end{equation*}
Adding \eqref{h2}, \eqref{h3} and \eqref{h4} into \eqref{h1}, and using the decay estimate \eqref{decay}, we get
\begin{equation}\label{h6}
\|S(\mathbb U)(\tau)\|_{L^1}\le C(1+\tau)^{-\frac32}+\|\r\|_{L^2}\|\nabla u\|_{L^2}^{\frac12}\|\nabla^3u\|_{L^2}^{\frac12}.
\end{equation}
Next, we need estimate $\|S(\mathbb U)(\tau)\|_{L^2}$. According to the definition of $S(\mathbb U)$, we get
\begin{equation*}
\|S(\mathbb U)(\tau)\|_{L^2}\le \|S_1(\tau)\|_{L^2}+\|S_2(\tau)\|_{L^2}+\|\nabla S_3(\tau)\|_{L^2}.
\end{equation*}
Using the decay estimate \eqref{decay} and \eqref{h5}, we get
\begin{equation*}
\|S_1(\tau)\|_{L^2}\le C\|(\r,u)\|_{H^1}\|\nabla^2(\r,u)\|_{L^2}\le (1+\tau)^{-\frac{10}{4}}M(t).
\end{equation*}
Using the H\"{o}lder, Sobolev inequalities, the uniform estimate \eqref{uni}
and decay estimate \eqref{decay}, we have
\begin{equation*}
\begin{aligned}
\|S_2(\tau)\|_{L^2}
&\le \|u\|_{L^3}\|\nabla u\|_{L^6}+\|\r\|_{L^\infty}\|\nabla^2u\|_{L^2}
  +\|\r\|_{L^3}\|\nabla\r\|_{L^6}+\|\nabla n\|_{L^3}\|\nabla^2n\|_{L^6}
\\&\le \big(\|(\r,u,\nabla n)\|_{H^1}
  +\|\nabla\r\|_{L^2}^\frac12\|\nabla^2\r\|_{L^2}^\frac12\big)
   \|\nabla^2(\r,u,\nabla n)\|
\\&
\le C\big((1+\tau)^{-\frac34}+(1+\tau)^{-\frac38}\big)(1+\tau)^{-\frac74}M(t)\\&
\le C(1+\tau)^{-\frac{17}{8}}M(t).
\end{aligned}
\end{equation*}
Similarly, we have
\begin{equation*}
\begin{aligned}
\|\nabla S_3(\tau)\|_{L^2}&\le \|\nabla n\|_{L^3}\|\nabla u\|_{L^6}+\|u\|_{L^3}\|\nabla^2n\|_{L^6}+\|\nabla n\|_{L^3}\|\nabla^2n\|_{L^6}(\|n\|_{L^\infty}+\underline d)\\&
\quad +\|\nabla n\|_{L^\infty}\|\nabla n\|_{L^3}\|\nabla n\|_{L^6}\\&\le
\|\nabla n\|_{H^1}\|\nabla^2u\|_{L^2}+\|u\|_{H^1}\|\nabla^3n\|_{L^2}+\|\nabla n\|_{H^1}\|\nabla^3n\|_{L^2}(\|n\|_{L^\infty}+\underline d)\\&
\quad +\|\nabla n\|_{L^\infty}\|\nabla n\|_{H^1}\|\nabla^2 n\|_{L^2}.
\end{aligned}
\end{equation*}
By the Sobolev inequality, the uniform estimate \eqref{uni}, and \eqref{h5},  we get
\begin{equation*}
\|n\|_{L^\infty} \le C\|\nabla n\|_{L^2}^\frac12\|\nabla^2n\|_{L^2}^\frac12\le C,
\end{equation*}
and
\begin{equation*}
\|\nabla n\|_{L^\infty}
\le C\|\nabla^2n\|_{L^2}^\frac12\|\nabla^3n\|_{L^2}^\frac12
\le (1+\tau)^{-\frac58}M(t)^\frac12(1+\tau)^{-\frac78}M(t)^\frac12
\le C(1+\tau)^{-\frac{12}{8}}M(t).
\end{equation*}
This together with the decay estimate \eqref{decay} yields
\begin{equation*}
\begin{aligned}
\|\nabla S_3(\tau)\|_{L^2}&\le C \|(u,\nabla n)\|_{H^1}\|\nabla^2(u,\nabla n)\|_{L^2}+C(1+\tau)^{-\frac{12}{8}}M(t)\|\nabla n\|_{H^1}\|\nabla^2 n\|_{L^2}
\\&
\le C(1+\tau)^{-\frac{10}{4}}M(t)+C(1+\tau)^{-3}M(t)\\&
\le C(1+\tau)^{-\frac{10}{4}}M(t).
\end{aligned}
\end{equation*}
Collecting the above estimates, we obtain that
\begin{equation}\label{h7}
\|S(\mathbb U)(\tau)\|_{L^2}\le C(1+\tau)^{-\frac{17}{8}}M(t).
\end{equation}
By Lemma \ref{low-me}, \eqref{h6} and \eqref{h7}, we have for $0\le k\le 2$
\begin{equation}\label{i1}
\begin{aligned}
\|\nabla^k\mathbb U^L(t)\|_{L^2}&\le C_5(1+t)^{-\frac34-\frac k2}\|\mathbb U(0)\|_{L^1}+C_5\int_{\frac t2}^t(1+t-\tau)^{-\frac k2}(1+\tau)^{-\frac{17}{8}}M(t)d\tau\\&
\quad +C_5\int_0^{\frac t2}\big((1+\tau)^{-\frac32}+\|\r\|_{L^2}\|\nabla u\|_{L^2}^\frac12\|\nabla^3u\|_{L^2}^\frac12\big)(1+t-\tau)^{-\frac34-\frac k2}d\tau.
\end{aligned}
\end{equation}
Direct calculation gives rise to
\begin{equation}\label{i2}
\int_{\frac t2}^t(1+t-\tau)^{-\frac k2}(1+\tau)^{-\frac{17}{8}}M(t)d\tau
\le C(1+t)^{-\frac98-\frac k2}M(t).
\end{equation}
and
\begin{equation}\label{i22}
\int_0^{\frac t2}(1+\tau)^{-\frac32}(1+t-\tau)^{-\frac34-\frac k2}d\tau
\le C(1+t)^{-\frac34-\frac k2}.
\end{equation}
Using the Young inequality, the decay estimate \eqref{decay} and the uniform estimate \eqref{uni}, we get
\begin{equation}\label{i3}
\begin{aligned}
&\int_0^{\frac t2}(1+t-\tau)^{-\frac34-\frac k2}\|\r\|_{L^2}\|\nabla u\|_{L^2}^\frac12\|\nabla^3u\|_{L^2}^\frac12d\tau\\&
\le \int_0^{\frac t2}(1+t-\tau)^{-\frac34-\frac k2}\big(\|\r\|_{L^2}^2+\|\nabla u\|_{L^2}^2+\|\nabla^3u\|_{L^2}^2\big)d\tau\\&
\le C(1+t)^{-\frac34-\frac k2}\int_0^{\frac t2}\big((1+\tau)^{-\frac32}+\|\nabla^3u(\tau)\|_{L^2}^2\big)d\tau\\&
\le C(1+t)^{-\frac34-\frac k2}.
\end{aligned}
\end{equation}
Substituting  the estimates \eqref{i2}, \eqref{i22} and \eqref{i3} into \eqref{i1}, we get
\begin{equation}\label{i4}
\begin{aligned}
\|\nabla^k\mathbb U^L(t)\|_{L^2}\le C\big(\|\mathbb U(0)\|_{L^1}+C+(1+t)^{-\frac38}M(t)\big)(1+t)^{-\frac34-\frac k2},
\end{aligned}
\end{equation}
which together with Lemma \ref{lemma1} yields
\begin{equation}\label{i4}
\begin{aligned}
\|\nabla^2\mathbb U(t)\|_{L^2}^2
&\le Ce^{-C_4t}\|\nabla^2\mathbb U(T_1)\|_{L^2}^2
   +C\big(\|\mathbb U(0)\|^2_{L^1}+1\big)\int_{T_1}^te^{-C_4(t-\tau)}(1+\tau)^{-\frac72}d\tau\\&
\quad +C\int_{T_1}^te^{-C_4(t-\tau)}(1+\tau)^{-\frac34}M^2(\tau)(1+\tau)^{-\frac72}d\tau\\&
\le Ce^{-C_4t}\|\nabla^2\mathbb U(T_1)\|_{L^2}^2+C\big(\|\mathbb U(0)\|^2_{L^1}+1\big)(1+t)^{-\frac72}
  +CM^2(t)(1+t)^{-\frac{17}{4}}\\&
\le Ce^{-C_4t}\|\nabla^2\mathbb U(T_1)\|_{L^2}^2
+C\big(\|\mathbb U(0)\|^2_{L^1}+1+(1+t)^{-\frac34}M^2(t)\big)(1+t)^{-\frac72}
\end{aligned}
\end{equation}
for $t\ge T_1$. Therefore, using the decomposition \eqref{decom} and Lemma \ref{A1}, for $0\le k\le 2$, we obtain that
\begin{equation}\label{i5}
\|\nabla^k\mathbb U(t)\|_{L^2}^2
\le C\|\nabla^k\mathbb U^L(t)\|_{L^2}^2+ C\|\nabla^k\mathbb U^h(t)\|_{L^2}^2
\le C\|\nabla^k\mathbb U^L(t)\|_{L^2}^2+ C\|\nabla^2\mathbb U(t)\|_{L^2}^2.
\end{equation}
Putting \eqref{i3} and \eqref{i4} into \eqref{i5}, for all $t\ge T_1$, we obtain that
\begin{equation}\label{i6}
\begin{aligned}
\|\nabla^k\mathbb U(t)\|_{L^2}&\le C\big(\|\mathbb U(0)\|^2_{L^1}+1+(1+t)^{-\frac34}M^2(t)\big)(1+t)^{-\frac32-k}
      +Ce^{-C_4t}\|\nabla^2\mathbb U(T_1)\|_{L^2}^2\\&
\quad +C\big(\|\mathbb U(0)\|^2_{L^1}+1+(1+t)^{-\frac34}M^2(t)\big)(1+t)^{-\frac72}.
\end{aligned}
\end{equation}
Recalling the definition of $M(t)$, from \eqref{i6}, we know there exists a positive constant $C_6$ such that
\begin{equation*}
M^2(t)\le C_6\big(\|\mathbb U(0)\|_{L^1}^2+1+(1+t)^{-\frac34}M^2(t)+\|\nabla^2\mathbb U(T_1)\|_{L^2}^2\big).
\end{equation*}
Choosing $T_2$, such that for all $t\ge T_2$, there holds
\begin{equation*}
C_6(1+t)^{-\frac34}\le \frac12.
\end{equation*}
Then we have
\begin{equation*}
M^2(t)\le 2C_6\big(\|\mathbb U(0)\|_{L^1}^2+1+\|\nabla^2\mathbb U(T_1)\|_{L^2}^2\big), \quad t\ge T_2,
\end{equation*}
which together with the uniform estimate \eqref{uni} implies
\begin{equation*}
M(t)\le C, \quad \text{for \ all}\  t\in[T_2,+\infty).
\end{equation*}
By the definition of $M(t)$ in \eqref{i0}, we complete the proof of this lemma.
\end{proof}
\textbf{Proof of Theorem \ref{th2}:} With the decay estimates stated in Lemma \ref{lemm28}, we can complete the proof of the Theorem \ref{th2}.

\section{Proof of Theorem \ref{th3}}

\quad
In this section, we will establish the lower bound of decay rate for the global solution of Cauchy problem \eqref{liqu0}.
\begin{lemm}\label{lemm31}
Denote $w_0:=\Lambda n_0$, and
assume that the Fourier transform $\mathcal F(w_0)=\widehat {w_0}$ satisfies $|
\widehat{w_0}|\ge c_0$, $0< |\xi|\ll 1$, with $c_0>0$ a constant. Then the solution $n(t,x)$ obtained in Theorem \ref{th1} has the decay rate for all $t\ge T_3$
\begin{equation*}
\|\nabla ^{k+1} n(t)\|_{L^2}\ge C(1+t)^{-\frac{3+2k}{4}}, \quad k=0,1,2,
\end{equation*}
where $T_3$ is defined below.
\end{lemm}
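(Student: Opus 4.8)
The plan is to isolate the slowly-decaying linear part of $n$ through Duhamel's formula for the heat equation and to show that the nonlinear contribution decays strictly faster. Writing the third equation of \eqref{liqu1} in Duhamel form,
\begin{equation*}
n(t)=e^{t\Delta}n_0+\int_0^te^{(t-\tau)\Delta}S_3(\tau)\,d\tau,
\end{equation*}
the triangle inequality gives $\|\nabla^{k+1}n(t)\|_{L^2}\ge I_L(t)-I_N(t)$, where $I_L(t):=\|\nabla^{k+1}e^{t\Delta}n_0\|_{L^2}$ and $I_N(t):=\big\|\int_0^t\nabla^{k+1}e^{(t-\tau)\Delta}S_3(\tau)\,d\tau\big\|_{L^2}$. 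I would first bound $I_L$ from below: by Plancherel and the identity $\widehat{w_0}=|\xi|\widehat{n_0}$,
\begin{equation*}
I_L(t)^2=\int_{\R^3}|\xi|^{2k}e^{-2|\xi|^2t}|\widehat{w_0}(\xi)|^2\,d\xi\ge c_0^2\int_{0<|\xi|\le\eta}|\xi|^{2k}e^{-2|\xi|^2t}\,d\xi,
\end{equation*}
where $\eta>0$ is chosen small enough that $|\widehat{w_0}|\ge c_0$ holds there. Passing to polar coordinates and rescaling $\xi\mapsto\xi/\sqrt t$ shows the right-hand integral equals $t^{-\frac{2k+3}{2}}$ times a factor converging to a strictly positive constant as $t\to\infty$; hence $I_L(t)\ge c_\ast(1+t)^{-\frac{3+2k}{4}}$ for all $t$ beyond some fixed time $T$.

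Next I would estimate $I_N$ by splitting $\int_0^t=\int_0^{t/2}+\int_{t/2}^t$. On $[0,t/2]$, where $t-\tau\ge t/2$, I would use the $L^1$--$L^2$ smoothing $\|\nabla^{k+1}e^{s\Delta}f\|_{L^2}\lesssim s^{-\frac{k+1}{2}-\frac34}\|f\|_{L^1}$ together with $\|S_3(\tau)\|_{L^1}\lesssim\|u\|_{L^2}\|\nabla n\|_{L^2}+(\|n\|_{L^\infty}+1)\|\nabla n\|_{L^2}^2\lesssim(1+\tau)^{-3/2}$, which follows from \eqref{decay} and \eqref{uni}; this part contributes $\lesssim(1+t)^{-\frac{k+1}{2}-\frac34}=(1+t)^{-\frac{3+2k}{4}-\frac12}$. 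On $[t/2,t]$ I would place only one derivative on the semigroup and the remaining $k$ on $S_3$, i.e. $\|\nabla^{k+1}e^{s\Delta}S_3\|_{L^2}\lesssim s^{-1/2}\|\nabla^kS_3(\tau)\|_{L^2}$, and then establish $\|\nabla^kS_3(\tau)\|_{L^2}\lesssim(1+\tau)^{-\gamma_k}$ with $\gamma_k>\frac{5+2k}{4}$ for $k=0,1,2$, using that each term of $S_3$ is a product of at least two of $u,n,\nabla n$, the optimal decay estimate \eqref{decay1}, the uniform bound \eqref{uni}, and Gagliardo--Nirenberg interpolation. Since $1+\tau\gtrsim1+t$ on $[t/2,t]$ and $\int_{t/2}^t(t-\tau)^{-1/2}\,d\tau\lesssim t^{1/2}$, this part is $\lesssim(1+t)^{-\gamma_k+\frac12}\lesssim(1+t)^{-\frac{3+2k}{4}-\sigma}$ for some $\sigma>0$. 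Altogether $I_N(t)\le C(1+t)^{-\frac{3+2k}{4}-\sigma}$.

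Finally I would choose $T_3\ge\max\{T,T_2\}$ large enough that $C(1+t)^{-\sigma}\le\frac12c_\ast$ for all $t\ge T_3$, which gives $\|\nabla^{k+1}n(t)\|_{L^2}\ge\frac12 c_\ast(1+t)^{-\frac{3+2k}{4}}$ for $t\ge T_3$, as claimed. The main obstacle I expect is the estimate of $I_N$ on $[t/2,t]$ in the case $k=2$: among the ways to split the three derivatives between $e^{s\Delta}$ and $S_3$, only "one derivative on the semigroup, two on $S_3$" leaves a kernel that is integrable near $\tau=t$, so one is forced to control $\|\nabla^2S_3\|_{L^2}$, and hence the optimal (no-loss) decay of $\nabla^3 n$ and $\nabla^2 u$ established in Section 2 is indispensable here---the mere $H^1$-rate \eqref{decay} would leave this term growing in $t$.
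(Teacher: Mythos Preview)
Your proposal is correct and, in fact, more streamlined than the paper's own argument. Both proofs begin identically: the linear heat solution $\bar n$ satisfies $\|\nabla^{k+1}\bar n(t)\|_{L^2}\ge C(1+t)^{-(3+2k)/4}$ by Plancherel and the hypothesis on $\widehat{w_0}$, and one then shows the difference $n_\delta=n-\bar n$ decays strictly faster. The divergence is in how the latter is handled. You run Duhamel uniformly for all $k=0,1,2$, splitting the time integral at $t/2$ and using $L^1\!\to\!L^2$ heat smoothing on the early piece and the pointwise bound $\|\nabla e^{s\Delta}\|_{L^2\to L^2}\lesssim s^{-1/2}$ together with $\|\nabla^k S_3\|_{L^2}\lesssim(1+\tau)^{-\gamma_k}$ (from \eqref{decay1}) on the late piece. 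The paper instead uses Duhamel only for $k=0$, obtaining $\|\nabla n_\delta\|_{L^2}\lesssim(1+t)^{-9/8}$ via \eqref{decay} and the $L^2_t$ bound in \eqref{uni}; for $k=1,2$ it switches to an energy argument on $\nabla^{k+1}n_\delta$ combined with the Schonbek time-dependent Fourier splitting $\|\nabla^{k+2}n_\delta\|_{L^2}^2\ge\frac{R}{1+t}\|\nabla^{k+1}n_\delta\|_{L^2}^2-\frac{R^2}{(1+t)^2}\|\nabla^{k}n_\delta\|_{L^2}^2$, bootstrapping from the previously obtained bound at order $k-1$.

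What each approach buys: yours is shorter, treats all three cases in one stroke, and (since you invoke \eqref{decay1} directly) actually yields the sharper rate $\|\nabla n_\delta\|_{L^2}\lesssim(1+t)^{-5/4}$ at the lowest order. The paper's iterative energy/Fourier-splitting scheme is more self-contained in that the $k=0$ step uses only \eqref{decay} and \eqref{uni}, not the optimal \eqref{decay1}; however, for $k=1,2$ it too relies on \eqref{decay1}, so there is no real economy. Your identification of the genuine obstacle---that $k=2$ forces control of $\|\nabla^2 S_3\|_{L^2}$, which in turn requires the no-loss rates for $\nabla^2 u$ and $\nabla^3 n$ from Section~2---is exactly right, and the paper meets the same requirement in its energy estimate for $\nabla^3 n_\delta$. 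One trivial adjustment: to guarantee $\tau\ge T_2$ on the interval $[t/2,t]$ you should take $T_3\ge 2T_2$ rather than $T_3\ge T_2$.
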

\begin{proof}
\text{Step 1:} Consider the linearized equation corresponding to the third equation of \eqref{liqu0}:
\begin{equation*}
\p_t\nabla\bar{n}(t,x)-\Delta(\nabla\bar{ n})=0, \quad \nabla\bar{n}(t,x)|_{t=0}=\nabla n_0(x), \quad x\in \mathbb R^3.
\end{equation*}
Using the semigroup method, it is easy to check that
\begin{equation}\label{l1}
\int|\nabla\bar n|^2dx=\int|\widehat{w_0}|^2e^{-2|\xi|^2t}d\xi\ge c_0^2\int_{0<|\xi|\ll1}e^{-2|\xi|^2t}d\xi\ge C(1+t)^{-\frac32}.
\end{equation}
Similarly, we have
\begin{equation}\label{l2}
\int|\nabla^2\bar n|^2dx=\int |\widehat {w_0}|^2|\xi|^2e^{-2|\xi|^2t}d\xi\ge C(1+t)^{-\frac52},
\end{equation}
and
\begin{equation}\label{l3}
\int|\nabla^3\bar n|^2dx=\int |\widehat {w_0}|^2|\xi|^4e^{-2|\xi|^2t}d\xi\ge C(1+t)^{-\frac72}.
\end{equation}

\text{Step 2:} Define $\nabla n_\delta(t,x):=\nabla n(t,x)-\nabla \bar n(t,x)$, then $\nabla n_\delta(t,x)$ satisfies
\begin{equation}\label{z1}
\p_t\nabla n_\delta-\Delta(\nabla n_\delta)=-\nabla(u\cdot\nabla n)+\nabla( |\nabla n|^2(n+\underline d)):=\nabla S_3, \quad \nabla n_\delta(t,x)|_{t=0}=0.
\end{equation}
Using Duhamel principle, for $k=0,1$, we obtain that
\begin{equation}\label{k0}
\begin{aligned}
\|\nabla\nd\|_{L^2}&\le C\int_0^t(1+t-\tau)^{-\frac54}\big(\|u\cdot \nabla n\|_{L^1}+\||\nabla n|^2(n+\underline d)\|_{L^1}+\|\nabla S_3\|_{L^2}\big)d\tau.
\end{aligned}
\end{equation}
By routine calculation, it is easy to see
\begin{equation}\label{k1}
\begin{aligned}
\|u\cdot\nabla n\|_{L^1}+\||\nabla n|^2(n+\underline d)\|_{L^1}&\le \|u\|_{L^2}\|\nabla n\|_{L^2}+\|\nabla n\|_{L^2}\|\nabla n\|_{L^3}\|n\|_{L^6}+\underline d\|\nabla n\|_{L^2}\|\nabla n\|_{L^2}\\&
\le C\big(\|u\|_{L^2}\|\nabla n\|_{L^2}+\|\nabla n\|_{L^2}\|\nabla n\|_{H^1}\|\nabla n\|_{L^2}+\|\nabla n\|_{L^2}^2\big),
\end{aligned}
\end{equation}
and
\begin{equation}\label{k2}
\begin{aligned}
\|\nabla S_3\|_{L^2}&\le \|\nabla u\|_{L^2}\|\nabla n\|_{L^\infty}+\|\nabla^2n\|_{L^2}\|u\|_{L^\infty}+2\|\nabla^2n\|_{L^2}\|\nabla n\|_{L^3}\|n\|_{L^6}\\&
\quad +\|\nabla n\|_{L^2}\|\nabla n\|_{L^3}\|\nabla n\|_{L^6}+\|\nabla ^2n\|_{L^3}\|\nabla n\|_{L^6}\\&
\le \|\nabla u\|_{L^2}\|\nabla^2n\|_{L^2}^\frac12\|\nabla^3n\|_{L^2}^\frac12+\|\nabla^2n\|_{L^2}\|\nabla u\|_{L^2}^\frac12\|\nabla^2u\|_{L^2}^\frac12\\&
\quad +\|\nabla n\|_{L^2}^\frac32\|\nabla^2n\|_{L^2}^\frac32+\|\nabla^2n\|_{L^2}^\frac32\|\nabla^3n\|_{L^2}^\frac12\\&
\le C\|\nabla(u,\nabla n)\|_{L^2}^\frac32\|\nabla^2(u,\nabla n)\|_{L^2}^\frac12+\|\nabla n\|_{L^2}^\frac32\|\nabla^2n\|_{L^2}^\frac32,
\end{aligned}
\end{equation}
where we used the Sobolev inequalities
\begin{equation}\label{m1}
\begin{aligned}
\|\nabla n\|_{L^\infty}+\|\nabla ^2n\|_{L^3}\le \|\nabla^2n\|_{L^2}^\frac12\|\nabla^3n\|_{L^2}^\frac12, \quad \|u\|_{L^\infty}\le \|\nabla u\|_{L^2}^
\frac12\|\nabla^2u\|_{L^2}^\frac12, \quad
 \|\nabla n\|_{L^3}\le \|\nabla n\|_{L^2}^\frac12\|\nabla^2n\|_{L^2}^\frac12.
\end{aligned}
\end{equation}
Adding \eqref{k1}, \eqref{k2} into \eqref{k0}, and using the decay estimate \eqref{decay}, we obtain that
\begin{equation*}
\begin{aligned}
\|\nabla \nd(t)\|_{L^2}&\le C\int_0^t(1+t-\tau)^{-\frac54}\big((1+\tau)^{-\frac32}+(1+\tau)^{-\frac94}+(1+\tau)^{-\frac98}\|\nabla^2(u,\nabla n)\|^\frac12_{L^2}\big)d\tau\\&
\le C(1+t)^{-\frac54}+C\int_0^t(1+t-\tau)^{-\frac54}(1+\tau)^{-\frac98}\|\nabla^2(u,\nabla n)\|^\frac12_{L^2}\big)d\tau.
\end{aligned}
\end{equation*}
Using the H\"{o}lder inequality, and the uniform estimate \eqref{uni}, we have
\begin{equation*}
\begin{aligned}
&\int_0^t(1+t-\tau)^{-\frac54}(1+\tau)^{-\frac98}\|\nabla^2(u,\nabla n)\|^\frac12_{L^2}\big)d\tau\\&
\le \big(\int_0^t(1+t-\tau)^{-\frac53}(1+\tau)^{-\frac32}d\tau\big)^\frac34\big(\int_0^t\|\nabla^2(u,\nabla n)\|_{L^2}^2d\tau\big)^\frac14\\&
\le C(1+t)^{-\frac98},
\end{aligned}
\end{equation*}
which implies that for all $t\ge 0$, there holds
\begin{equation}\label{kkk0}
\|\nabla\nd(t)\|_{L^2}\le C(1+t)^{-\frac98}.
\end{equation}

Applying $\nabla$ to \eqref{z1}, then multiplying the resulting equation by $\nabla^2\nd$ and integrating over $\mathbb R^3$, we get
\begin{equation}\label{k3}
\begin{aligned}
\frac12\frac{d}{dt}\int|\nabla^2\nd|^2dx+\int|\nabla^3\nd|^2dx=\int\nabla^2S_3\cdot\nabla^2\nd dx\le\frac12\|\nabla S_3\|_{L^2}^2+\frac12\|\nabla^3\nd\|^2_{L^2}.
\end{aligned}
\end{equation}
According to \eqref{k2} and the decay estimate \eqref{decay1}, for all $t\ge T_2$, there holds
\begin{equation*}
\|\nabla S_3(t)\|_{L^2}\le C(1+t)^{-\frac{11}{4}},
\end{equation*}
which together with \eqref{k3} gives rise to
\begin{equation}\label{k4}
\frac{d}{dt}\int|\nabla^2\nd|^2dx+\int|\nabla^3\nd|^2dx\le C(1+t)^{-\frac{11}{2}},\quad \text{for} \quad t\ge T_2.
\end{equation}
Define $S_0:=\{\xi\in \mathbb R^3||\xi|\le (\frac{R}{1+t})^\frac12\}$, then we can split the phase space $\mathbb R^3$ into two time-dependent regions. Here $R$ is a constant defined below. By straightforward calculation, we get
\begin{equation}\label{k5}
\begin{aligned}
\int |\nabla^3\nd|^2dx&\ge \int_{\mathbb R^3/S_0}|\xi|^4|\widehat{\nabla\nd}|^2d\xi\ge\frac{R}{1+t} \int_{\mathbb R^3}|\xi|^2|\widehat{\nabla\nd}|^2d\xi-\frac{R}{1+t} \int_{S_0}|\xi|^2|\widehat{\nabla\nd}|^2d\xi\\&
\ge \frac{R}{1+t} \int_{\mathbb R^3}|\xi|^2|\widehat{\nabla\nd}|^2d\xi-\frac{R^2}{(1+t)^2}\int_{S_0}|\widehat{\nabla\nd}|^2d\xi.
\end{aligned}
\end{equation}
Adding \eqref{k5} into \eqref{k4}, and using Plancherel equality, then for $t\ge T_2$, there holds
\begin{equation*}
\frac{d}{dt}\int|\nabla^2\nd|^2dx+\frac{R}{1+t}\|\nabla^2\nd\|_{L^2}^2
\le \frac{R^2}{(1+t)^2}\|\nabla\nd\|_{L^2}^2+C(1+t)^{-\frac{11}{2}}
\le CR^2(1+t)^{-\frac{17}{4}},
\end{equation*}
where we used \eqref{kkk0} in the last step.
Choosing $R=\frac{15}{4}$, and multiplying the resulting inequality by $(1+t)^{\frac{15}{4}}$, it follows that
\begin{equation*}
\frac{d}{dt}\big[(1+t)^{\frac{15}{4}}\|\nabla^2n_\delta(t)\|_{L^2}^2\big]\le C(1+t)^\frac12, \quad t\ge T_2.
\end{equation*}
The integration over $[T_2,t]$ and the uniform bound \eqref{uni} give rise to
\begin{equation}\label{m3}
\|\nabla^2\nd(t)\|_{L^2}^2\le C(1+t)^{-\frac{13}{4}},\quad t\ge T_2.
\end{equation}
This together with \eqref{l2}, we have
\begin{equation}\label{yyy1}
\|\nabla^2n\|_{L^2}
\ge \|\nabla^2\bar n\|_{L^2}-\|\nabla^2 \nd\|_{L^2}
\ge C(1+t)^{-\frac54}-C(1+t)^{-\frac{13}{8}}
\ge C(1+t)^{-\frac54},
\end{equation}
for large time $t$.

Next, we shall estimate the lower bound decay rate of $\nabla^3n$.
Applying $\nabla^2$ to \eqref{z1}, then multiplying the resulting equation by $\nabla^3\nd$ and integrating over $\mathbb R^3$, we obtain that
\begin{equation}\label{m2}
\frac12\int|\nabla^3\nd|^2dx+\int|\nabla^4\nd|^2dx=\int\nabla^3S_3\cdot\nabla^3\nd dx\le\|\nabla^2S_3\|_{L^2}\|\nabla^4\nd\|_{L^2}.
\end{equation}
Recalling the definition of $S_3$, routine calculation gives
\begin{equation*}
\begin{aligned}
\|\nabla^2S_3\|_{L^2}&\le \|\nabla^2u\|_{L^2}\|\nabla n\|_{L^\infty}+2\|\nabla u\|_{L^3}\|\nabla^2n\|_{L^6}+\|\nabla^3n\|_{L^2}\|u\|_{L^\infty}\\&
\quad +\|n\|_{L^\infty}\|\nabla^2n\|_{L^3}\|\nabla^2n\|_{L^6}
+\|\nabla^2n\|_{L^3}\|\nabla^2n\|_{L^6}
+\|n\|_{L^\infty}\|\nabla n\|_{L^\infty}\|\nabla^3n\|_{L^2}\\&
\quad +\|\nabla n\|_{L^\infty}\|\nabla^3n\|_{L^2}+\|\nabla n\|_{L^\infty}\|\nabla n\|_{L^3}\|\nabla^2n\|_{L^6}\\&
\le C\|\nabla(u,\nabla n)\|_{L^2}^\frac12\|\nabla^2(u,\nabla n)\|_{L^2}^\frac12\|\nabla^2(u,\nabla n)\|_{L^2}+\|\nabla n\|_{L^2}^\frac12\|\nabla^2n\|_{L^2}\|\nabla^3n\|_{L^2}^\frac32,
\end{aligned}
\end{equation*}
where we used \eqref{m1} and the interpolation inequality
$
\|n\|_{L^\infty}\le \|\nabla n\|_{L^2}^\frac12\|\nabla^2n\|_{L^2}^\frac12.
$
According to the decay estimate \eqref{decay1}, for all $t\ge T_2$, there holds
\begin{equation*}
\|\nabla^2S_3(t)\|_{L^2}\le C(1+t)^{-\frac{13}{4}},
\end{equation*}
which together with \eqref{m2} yields directly
\begin{equation*}
\frac{d}{dt}\int|\nabla^3\nd|^2dx+\int|\nabla^4\nd|^2dx\le C(1+t)^{-\frac{13}{2}}, \quad t\ge T_2.
\end{equation*}
Similarly, we have
\begin{equation*}
\|\nabla^4\nd\|_{L^2}^2\ge \frac{R}{1+t}\|\nabla^3\nd\|_{L^2}^2-\frac{R^2}{(1+t)^2}\|\nabla^2\nd\|_{L^2}^2.
\end{equation*}
Therefore, we have
\begin{equation}\label{m4}
\frac{d}{dt}\int|\nabla^3\nd|^2dx+\frac{R}{1+t}\|\nabla^3\nd\|_{L^2}^2 \le \frac{R^2}{(1+t)^2}\|\nabla^2\nd\|_{L^2}^2+C(1+t)^{-\frac{13}{2}}
\le R^2(1+t)^{-\frac{21}{4}},
\end{equation}
where we used \eqref{m3} in the last step.
Multiplying $(1+t)^{\frac{19}{4}}$ on the both side of \eqref{m4}, and choosing $R=\frac{19}{4}$, we get
\begin{equation*}
\frac{d}{dt}\big((1+t)^{\frac{19}{4}}\|\nabla^3\nd(t)\|_{L^2}^2\big)\le C(1+t)^{-\frac12},\quad t\ge T_2.
\end{equation*}
Integrating from $[T_2, t]$ and using the uniform estimate \eqref{uni}, we get
\begin{equation*}
\|\nabla^3\nd(t)\|_{L^2}^2\le C(1+t)^{-\frac{17}{4}},\quad t\ge T_2.
\end{equation*}
Then we have
\begin{equation}\label{yyy2}
\|\nabla^3n(t)\|_{L^2}
\ge\|\nabla^3\bar n(t)\|_{L^2}-\|\nabla^3\nd(t)\|_{L^2}
\ge C(1+t)^{-\frac74}-C(1+t)^{-\frac{17}{8}}
\ge C(1+t)^{-\frac74},
\end{equation}
for large time. The combination of \eqref{l1}, \eqref{yyy1} and \eqref{yyy2} completes the proof of this lemma.
\end{proof}

We are now concerned with the lower bound of decay rate for $(\r, u)$. Let us define $m:=\rho u$, we rewrite $\eqref{liqu}_1$ and $\eqref{liqu}_2$ in the perturbation form
\begin{equation*}
  	\left\{\begin{aligned}
  &\partial_t\r+\dive m=0,\\
   &\partial_tm-\mu\Delta m-(\mu+\lambda)\nabla\dive m+P'(1)\nabla\r=F,
  	\end{aligned}\right.
  \end{equation*}
where
\begin{equation*}
\begin{aligned}
F&:=-\mu\Delta(\r m)-(\mu+\lambda)\nabla\dive(\r u)-\dive((1+\r)u\otimes u)\\&
\quad -\nabla\big(P(1+\r)-P(1)-P'(1)\r\big)-\dive(\nabla d\odot\nabla d)-\frac12\nabla(|\nabla d|^2),
\end{aligned}
\end{equation*}
here $\nabla d\odot\nabla d=(\langle\p_id,\p_jd \rangle)_{1\le i,j\le 3}$.
It is easy to check
\begin{equation*}
\nabla d\cdot\Delta d=\dive (\nabla d\odot\nabla d)-\frac12\nabla(|\nabla d|^2).
\end{equation*}
In order to obtain the lower decay estimate, we need to consider the linearized system
\begin{equation}\label{n1}
  	\left\{\begin{aligned}
  &\partial_t\bar\r+\dive \bar m=0,\\
   &\partial_t\bar m-\mu\Delta\bar m-(\mu+\lambda)\nabla\dive \bar m+P'(1)\nabla\bar\r=0,
  	\end{aligned}\right.
  \end{equation}
  with the initial data
  \begin{equation*}
  (\bar\r,\bar m)|_{t=0}=(\r_0,m_0).
  \end{equation*}

  Next, we introduce the following lemma, which can be found in \cite{li-zhang}.
  \begin{lemm}
  If $|\widehat{\r_0}|\ge c_0$, $\widehat{m_0}=0$,  $0\le|\xi|\ll1$,
  with $c_0>0$ a constant, then the solution $(\bar\r,\bar m)$ of the linearized system \eqref{n1} has the following estimate
  \begin{equation*}
  \min\{\|\nabla^k\bar\r\|_{L^2},\|\nabla^k \bar m\|_{L^2}\}\ge C(1+t)^{-\frac34-\frac k2}, \quad k=0,1,2
  \end{equation*}
  for large time $t$.
  \end{lemm}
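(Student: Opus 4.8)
The plan is to pass to the Fourier side, solve \eqref{n1} explicitly for low frequencies, and then produce a thin frequency annulus on which $\widehat{\bar\r}$ and $\widehat{\bar m}$ are bounded below at the claimed rate. Applying $\mathcal F$ in $x$, write $d(t,\xi):=\tfrac{\xi}{|\xi|}\cdot\widehat{\bar m}(t,\xi)$ for the compressible component of $\bar m$ and split $\widehat{\bar m}$ into its parts parallel and orthogonal to $\xi$. On the set $0<|\xi|\ll1$ the orthogonal part solves $\p_t(\cdot)+\mu|\xi|^2(\cdot)=0$ with zero initial datum (because $\widehat{m_0}=0$ there), hence vanishes; and $(\widehat{\bar\r},d)$ solves the $2\times2$ linear ODE with matrix $\begin{pmatrix}0&-i|\xi|\\ -iP'(1)|\xi|&-\nu|\xi|^2\end{pmatrix}$, $\nu:=2\mu+\lambda$, whose roots are $\lambda_\pm(\xi)=-\tfrac12\nu|\xi|^2\pm i|\xi|\beta(\xi)$ with $\beta(\xi):=\tfrac12\sqrt{4P'(1)-\nu^2|\xi|^2}\to\sqrt{P'(1)}>0$ as $\xi\to0$. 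Since $\widehat{m_0}=0$ forces $d(0,\xi)=0$, solving the ODE gives, for $0<|\xi|\ll1$,
\begin{equation*}
\widehat{\bar\r}(t,\xi)=e^{-\frac{\nu|\xi|^2}{2}t}\Big(\cos(|\xi|\beta t)+\tfrac{\nu|\xi|}{2\beta}\sin(|\xi|\beta t)\Big)\widehat{\r_0}(\xi),\qquad
|\widehat{\bar m}(t,\xi)|=\tfrac{P'(1)}{\beta}\,e^{-\frac{\nu|\xi|^2}{2}t}\,\big|\sin(|\xi|\beta t)\big|\,|\widehat{\r_0}(\xi)|.
\end{equation*}

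Next, choose $0<R_1<R_2$ small enough that the above holds on $|\xi|\le R_2$, and set $A_t:=\{\xi:R_1(1+t)^{-1/2}\le|\xi|\le R_2(1+t)^{-1/2}\}$. On $A_t$ one has $e^{-\nu|\xi|^2t}\ge e^{-\nu R_2^2}>0$, $|\widehat{\r_0}|\ge c_0$, $\beta$ bounded above and below, and $|\xi|\le R_2(1+t)^{-1/2}\to0$, so the term $\tfrac{\nu|\xi|}{2\beta}\sin$ in $\widehat{\bar\r}$ is of lower order. By Plancherel,
\begin{equation*}
\|\nabla^k\bar m(t)\|_{L^2}^2\gtrsim c_0^2\!\int_{A_t}\!|\xi|^{2k}\sin^2(|\xi|\beta(\xi)t)\,d\xi,\qquad
\|\nabla^k\bar\r(t)\|_{L^2}^2\gtrsim c_0^2\!\int_{A_t}\!|\xi|^{2k}\cos^2(|\xi|\beta(\xi)t)\,d\xi-C(1+t)^{-\frac{2k+4}{2}}.
\end{equation*}
The key point is the oscillation bound: $\int_{A_t}|\xi|^{2k}\sin^2(|\xi|\beta(\xi)t)\,d\xi\ge\tfrac14\int_{A_t}|\xi|^{2k}\,d\xi$ for $t$ large (and likewise for $\cos^2$). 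Passing to radial coordinates and using that $r\mapsto r\beta(r)$ is a smooth increasing change of variable near $r=0$ with derivative bounded away from $0$, the phase $r\beta(r)t$ sweeps an interval of length $\gtrsim(R_2-R_1)(1+t)^{1/2}\to\infty$, so the average of $\sin^2$ (resp. $\cos^2$) over $A_t$ tends to $\tfrac12$. Since $\int_{A_t}|\xi|^{2k}\,d\xi=c\int_{R_1(1+t)^{-1/2}}^{R_2(1+t)^{-1/2}}r^{2k+2}\,dr\gtrsim(1+t)^{-\frac{2k+3}{2}}$, we conclude $\|\nabla^k\bar\r(t)\|_{L^2}^2,\ \|\nabla^k\bar m(t)\|_{L^2}^2\ge C(1+t)^{-\frac{2k+3}{2}}$ for large $t$, i.e. $\min\{\|\nabla^k\bar\r\|_{L^2},\|\nabla^k\bar m\|_{L^2}\}\ge C(1+t)^{-\frac34-\frac k2}$.

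The main obstacle is exactly this oscillation estimate: unlike the pure heat flow treated in Lemma \ref{lemm31}, the acoustic Green function carries the rapidly oscillating factors $\cos(|\xi|\beta t)$, $\sin(|\xi|\beta t)$, which vanish on large subsets of frequency space, so a naive restriction to $\{|\xi|\ll1\}$ does not work. The annulus is taken at the scale $|\xi|\sim(1+t)^{-1/2}$ precisely so that (i) the parabolic damping $e^{-\nu|\xi|^2t}$ remains uniformly positive and (ii) the acoustic phase still completes arbitrarily many periods, which makes the time-averaged square of the oscillation bounded below. One must also verify that the lower-order corrections — the $\tfrac{\nu|\xi|}{2\beta}\sin$ term in $\widehat{\bar\r}$ and the non-constancy of $\beta$ — are genuinely negligible on $A_t$; both follow from $|\xi|\lesssim(1+t)^{-1/2}$ together with the smoothness of $\beta$ near the origin.
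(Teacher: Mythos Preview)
Your argument is correct. The paper, however, does not give its own proof of this lemma: it simply quotes the result from \cite{li-zhang} (``Next, we introduce the following lemma, which can be found in \cite{li-zhang}''), so there is nothing to compare line by line.

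For what it is worth, the route you take --- explicit diagonalisation of the $2\times2$ acoustic block on the Fourier side, reduction to the compressible component via $\widehat{m_0}=0$, restriction to the moving annulus $|\xi|\sim(1+t)^{-1/2}$ so that the parabolic damping $e^{-\nu|\xi|^2t/2}$ stays bounded below while the acoustic phase $|\xi|\beta(\xi)t$ sweeps many periods, and finally the Riemann--Lebesgue/averaging step to replace $\sin^2,\cos^2$ by $\tfrac12$ up to a vanishing error --- is exactly the standard mechanism behind this type of lower bound and is what \cite{li-zhang} does. Your treatment of the two subtleties (the lower-order $\tfrac{\nu|\xi|}{2\beta}\sin$ correction in $\widehat{\bar\r}$, and the non-constancy of $\beta$ in the phase) is adequate: both are $O((1+t)^{-1/2})$ perturbations on the annulus and do not affect the leading-order lower bound.
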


  Finally, we establish the lower bound of decay rate for the global solution of Cauchy problem \eqref{liqu1}.
  \begin{lemm}\label{lemm33}
    If $|\widehat{\r_0}|\ge c_0$, $\widehat{m_0}=0$,  $0\le|\xi|\ll1$,
  with $c_0>0$ a constant, then the solution $(\r,u)$ of the system \eqref{liqu1} has the following estimate
  \begin{equation*}
  \min\{\|\nabla^k\r(t)\|_{L^2},\|\nabla^k u(t)\|_{L^2}\}\ge C (1+t)^{-\frac34-\frac k2}, \quad k=0,1,2
  \end{equation*}
  for large time $t$.
  \end{lemm}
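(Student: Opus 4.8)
### Proof plan for Lemma \ref{lemm33}

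The strategy is to bound the difference between the true solution $(\r,u)$ (or rather $(\r,m)$ with $m=\rho u$) and the solution $(\bar\r,\bar m)$ of the linearized system \eqref{n1}, then combine the lower bound for the linear part with an upper bound for the difference. Write $(\r_\delta,m_\delta):=(\r-\bar\r,m-\bar m)$, which solves the system \eqref{n1} with right-hand side $F$ in the momentum equation and zero initial data. By Duhamel's principle and the $L^1$--$L^2$ and $L^2$--$L^2$ decay estimates for the semigroup generated by the linearized compressible Navier--Stokes operator (the same type of estimate as in Lemma \ref{low-me}, restricted to the $(\r,m)$ block), I would obtain, for $k=0,1,2$,
\begin{equation*}
\|\nabla^k(\r_\delta,m_\delta)(t)\|_{L^2}
\le C\int_0^{\frac t2}(1+t-\tau)^{-\frac34-\frac k2}\|F(\tau)\|_{L^1}d\tau
  +C\int_{\frac t2}^t(1+t-\tau)^{-\frac k2}\|\nabla^{(k-1)_+}F(\tau)\|_{L^2}d\tau,
\end{equation*}
with an appropriate treatment of the derivative-loss terms $\Delta(\r m)$ and $\nabla\dive(\r u)$ in $F$ (integrate by parts to move one derivative onto the semigroup, which is legitimate since we are away from the initial layer on $[\tfrac t2,t]$, or directly estimate $\|\nabla^k F\|_{L^2}$ using the uniform bound \eqref{uni} and the already-established optimal decay \eqref{decay00} / Lemma \ref{lemm28}).

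The key input is that $F$ is genuinely quadratic (plus higher order) in the perturbation, so each term carries the product of two decaying factors. Using the optimal decay rates from Theorem \ref{th2} together with \eqref{uni}, I expect
\begin{equation*}
\|F(\tau)\|_{L^1}\le C(1+\tau)^{-\frac32},\qquad
\|\nabla^k F(\tau)\|_{L^2}\le C(1+\tau)^{-\frac74-\frac k2}\quad(k=0,1),
\end{equation*}
where the $L^1$ bound comes from Cauchy--Schwarz ($\|ab\|_{L^1}\le\|a\|_{L^2}\|b\|_{L^2}$) applied to terms like $\nabla d\odot\nabla d$, $(1+\r)u\otimes u$, $\r m$, and the quadratic pressure remainder, each estimated by $\|\cdot\|_{L^2}\|\nabla\cdot\|_{L^2}\lesssim(1+\tau)^{-3/4}(1+\tau)^{-5/4}$; the nematic terms $\dive(\nabla d\odot\nabla d)$ and $\nabla(|\nabla d|^2)$ are handled using Lemma \ref{lemm31} and \eqref{decay00}. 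Plugging these into the Duhamel estimate and computing the time integrals in the standard way (splitting at $t/2$), one gets
\begin{equation*}
\|\nabla^k(\r_\delta,m_\delta)(t)\|_{L^2}\le C(1+t)^{-\frac34-\frac k2-\frac14}=o\big((1+t)^{-\frac34-\frac k2}\big),
\end{equation*}
i.e. the difference decays strictly faster than the linear part.

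Then the lower bound follows by the triangle inequality: for $k=0,1,2$ and large $t$,
\begin{equation*}
\|\nabla^k\r(t)\|_{L^2}\ge\|\nabla^k\bar\r(t)\|_{L^2}-\|\nabla^k\r_\delta(t)\|_{L^2}\ge C(1+t)^{-\frac34-\frac k2}-C(1+t)^{-1-\frac34-\frac k2+\frac12}\ge c(1+t)^{-\frac34-\frac k2},
\end{equation*}
and similarly for $\nabla^k m$. Finally I would transfer the lower bound on $\nabla^k m=\nabla^k(\rho u)$ to one on $\nabla^k u$: since $m=u+\r u$, we have $\nabla^k u=\nabla^k m-\nabla^k(\r u)$, and $\|\nabla^k(\r u)\|_{L^2}$ is a product-type term controlled by \eqref{uni} and \eqref{decay00} at rate $(1+t)^{-3/2-k/2}$ or faster, which is again negligible compared to $(1+t)^{-3/4-k/2}$; hence $\|\nabla^k u(t)\|_{L^2}\ge c(1+t)^{-3/4-k/2}$.

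The main obstacle I anticipate is the derivative loss in $F$: the terms $\mu\Delta(\r m)$ and $(\mu+\lambda)\nabla\dive(\r u)$ contain two derivatives on the unknowns, so when estimating $\|\nabla^k F\|_{L^2}$ for $k=1,2$ one would formally need control of $\nabla^3$ or $\nabla^4$ of the solution, which is not available uniformly. The remedy is to never differentiate these terms directly: in the high-frequency/near-diagonal time regime $[\tfrac t2,t]$ integrate by parts in the Duhamel formula so that the extra derivatives fall on the heat-type semigroup kernel (producing an extra integrable singularity $(t-\tau)^{-1}$ that is harmless after the split at $t/2$ is chosen as $[\tfrac t2, t-1]\cup[t-1,t]$, or by using that $\|\nabla^{k+2}e^{-\tau\mathbb G}f\|_{L^2}\lesssim\tau^{-1}\|\nabla^k f\|_{L^2}$ type bounds), reducing everything to $\|\nabla^2(\r,u,\nabla d)\|_{L^2}$, which is exactly what Theorem \ref{th2} provides at the optimal rate. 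Once this bookkeeping is done, all time integrals are elementary.
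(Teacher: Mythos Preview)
Your overall architecture (lower bound for $(\bar\r,\bar m)$, faster decay for $(\r_\delta,m_\delta)$ via Duhamel, triangle inequality, then transfer from $m$ to $u$) matches the paper. The differences are in how the two technical obstacles are handled, and the paper's choices are considerably simpler than yours.

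First, the paper runs the Duhamel estimate only on the \emph{low-frequency} part $(\r_\delta^l,m_\delta^l)$ rather than on the full difference. On $|\xi|\le r_0$ any power of $|\xi|$ is a bounded multiplier, so the divergence structure of $F$ can be exploited cheaply: every term in $F$ is already written as $\dive(\cdot)$ or $\nabla(\cdot)$, and the worst terms $\mu\Delta(\r m)+(\mu+\lambda)\nabla\dive(\r u)$ carry two such derivatives, which are simply absorbed into the low-frequency semigroup decay as an extra $(1+t-\tau)^{-1}$. This reduces the $L^1$ input to quantities like $\|\r u\|_{L^1}$, $\|(1+\r)u\otimes u\|_{L^1}$, $\|\nabla d\odot\nabla d\|_{L^1}$, all controlled by \eqref{decay} alone at rate $(1+\tau)^{-3/2}$. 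No $L^2$ time-splitting, no near-diagonal singularity, and no smoothing property of the semigroup is needed. The passage from $\|\nabla^k\r^l\|_{L^2}$ to $\|\nabla^k\r\|_{L^2}$ is then trivial since $\chi_0\le 1$.

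Second, and more importantly, the paper does \emph{not} attempt $k=2$ through Duhamel at all. It establishes the lower bounds only for $k=0,1$ by the method above, and then obtains $k=2$ in one line from the interpolation inequality $\|\nabla f\|_{L^2}^2\le\|f\|_{L^2}\|\nabla^2 f\|_{L^2}$ combined with the upper bound on $\|f\|_{L^2}$ from Theorem~\ref{th2}. This completely sidesteps the derivative-loss problem you identify.

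Your proposed remedy for that problem---moving extra derivatives onto the semigroup kernel via estimates of the type $\|\nabla^{k+2}e^{-\tau\mathbb G}f\|_{L^2}\lesssim\tau^{-1}\|\nabla^k f\|_{L^2}$---is the weakest point of your plan. For the linearized compressible Navier--Stokes operator this estimate is \emph{false} as stated: the high-frequency density mode decays only like $e^{-ct}$ with no gain of spatial regularity (its eigenvalue tends to a nonzero constant as $|\xi|\to\infty$), so the semigroup is not globally parabolic. One can rescue the argument by splitting into low and high frequencies and treating each separately, but at that point you have essentially reinvented the paper's approach with more work. The interpolation shortcut for $k=2$ is the cleaner fix.
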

  \begin{proof}
  Define $\rd:=\r-\bar\r$, $\md:=m-\bar m$, then $(\rd,\md)$ satisfies
  \begin{equation*}
  	\left\{\begin{aligned}
  &\partial_t\rd+\dive\md=0,\\
   &\partial_t\md-\mu\Delta\md-(\mu+\lambda)\nabla\dive \md+P'(1)\nabla\rd=F,
  	\end{aligned}\right.
  \end{equation*}
  with the initial data
  \begin{equation*}
  (\rd,\md)|_{t=0}=(0,0).
  \end{equation*}

  First, we investigate the time decay rate for $L^2-$norm of the low-frequency part $\nabla^k(\rd^l,\md^l)$, $k=0,1$. By Duhamel principle, we have
  \begin{equation}\label{o1}
  \begin{aligned}
  \|\nabla^k(\rd^l,\md^l)\|_{L^2}&\le C\int_0^t(1+t-\tau)^{-\frac54-\frac k2}\Big[\|(1+\r)u\otimes u\|_{L^1}+\|P(1+\r)-P(1)-P'(1)\r\|_{L^1}\\&
  \qquad  \quad+\|\nabla d\odot\nabla d\|_{L^1}+\||\nabla d|^2\|_{L^1}\Big]d\tau
 +C\int_0^t(1+t)^{-\frac74-\frac k2}\|\r u\|_{L^1}d\tau.
  \end{aligned}
  \end{equation}
  It is easy to check that
\begin{equation*}
\begin{aligned}
&\|(1+\r)u\otimes u\|_{L^1}+\|P(1+\r)-P(1)-P'(1)\r\|_{L^1}
  +\|\nabla d\odot\nabla d\|_{L^1}+\||\nabla d|^2\|_{L^1}
  +\|\r u\|_{L^1}\\&
  \le C\big( \|\rho\|_{L^\infty}\|u\|_{L^2}^2+\|\r\|_{L^2}^2+\|\nabla n\|_{L^2}^2+\|\r\|_{L^2}\|u\|_{L^2}\big)\\&
  \le C\big(\|u\|_{L^2}^2+\|\r\|_{L^2}^2+\|\nabla n\|_{L^2}^2+\|\r\|_{L^2}\|u\|_{L^2}\big),
\end{aligned}
\end{equation*}
this together with \eqref{o1} gives for $k=0,1$
\begin{equation*}
\begin{aligned}
\|\nabla^k(\rd^l,\md^l)\|_{L^2}&\le C\int_0^t(1+t-\tau)^{-\frac54-\frac k2}(1+\tau)^{-\frac32}d\tau +C\int_0^t(1+t-\tau)^{-\frac74-\frac k2}(1+\tau)^{-\frac32}d\tau\\&
\le C(1+t)^{-\frac54-\frac k4}.
\end{aligned}
\end{equation*}
Recalling the fact that
\begin{equation*}
\r^l=\bar\r^l+\rd^l, \quad m^l=\bar m^l+\md^l,
\end{equation*}
and hence we have for $k=0,1$
\begin{equation}\label{o3}
\|\nabla^k(\r(t)\|_{L^2}
\ge C( \|\nabla^k\bar\r^l(t)\|_{L^2}-\|\nabla^k\rd^l(t)\|_{L^2})
\ge C(1+t)^{-\frac34-\frac k2}-C(1+t)^{-\frac54-\frac k4}
\ge C(1+t)^{-\frac34-\frac k2},
\end{equation}
for large time $t$.
Using \eqref{lbrho}, it is easy to see that
\begin{equation}\label{o4}
\|u(t)\|_{L^2}=\|\frac m\rho(t)\|_{L^2}\ge \underline \rho\|m(t)\|_{L^2}\ge C\|m^l(t)\|_{L^2}\ge C(1+t)^{-\frac34}.
\end{equation}
Because $u=m-\frac{\r m}{\rho}$, we have
\begin{equation}\label{o2}
\|\nabla u^l(t)\|_{L^2}\ge \|\nabla m^l(t)\|_{L^2}-\|\nabla(\frac{\r m}{\rho})^l(t)\|_{L^2}
\end{equation}
According to Bernstein inequality \eqref{bern} and Hausdorff-Young inequality \eqref{hy}, we have
\begin{equation*}
\begin{aligned}
\|\nabla(\frac{\r m}{\rho})^l\|_{L^2}
&\le C\|(\frac{\r m}{\rho})^l\|_{L^2}
= C\|\mathcal F^{-1}\big(\chi_0(\xi)\widehat{(\frac{\r m}{\rho})}\big)\|_{L^2}
=C\|\chi_0(\xi)\widehat{(\frac{\r m}{\rho})}\|_{L^2}
\le C\|\chi_0(\xi)\|_{L^2}\|\widehat{(\frac{\r m}{\rho})}\|_{L^\infty}\\
&\le C\|\frac{\r m}{\rho}\|_{L^1}
\le C\|\r\|_{L^2}\|m\|_{L^2}
\le C\|\r\|_{L^2}\|u\|_{L^2}\le C(1+t)^{-\frac32},
\end{aligned}
\end{equation*}
this together with \eqref{o2} yields
\begin{equation}\label{o5}
\|\nabla u(t)\|_{L^2}\ge \|\nabla u^l(t)\|_{L^2}\ge C(1+t)^{-\frac54}-C(1+t)^{-\frac32}\ge C(1+t)^{-\frac54}
\end{equation}
for large $t$.
The combination of \eqref{o3}, \eqref{o4} and \eqref{o5} 
gives that
\begin{equation}\label{rr78}
  \min\{\|\nabla^k\r(t)\|_{L^2},\|\nabla^k u(t)\|_{L^2}\}\ge C (1+t)^{-\frac34-\frac k2}, \quad k=0,1
  \end{equation}
  for large $t$. Next, we apply interpolation inequality to obtain the lower bound of decay rate for the second order spatial derivative of large solution (cf. \cite{wu-zhang}). The decay estimate \eqref{rr78} together with interpolation inequality:
  \begin{equation*}
  \|\nabla f\|_{L^2}\le \|f\|_{L^2}^{\frac12}\|\nabla^2f\|_{L^2}^{\frac12}
  \end{equation*}
  gives rise to 
  \begin{equation}\label{rr79}
 \min\{\|\nabla^2\r(t)\|_{L^2},\|\nabla^2u(t)\|_{L^2}\}\ge C(1+t)^{-\frac74}
  \end{equation}
 for large time $t$. The combination of \eqref{rr78} and \eqref{rr79} completes the proof of this lemma.
    \end{proof}

   \textbf{Proof of Theorem \ref{th3}:} With the combination of Lemmas \ref{lemm31} and \ref{lemm33}, we can complete the proof of Theorem \ref{th3}.

\begin{appendices}
\section{Estimates on the linearized equations}\label{appendixA}

\quad
We adopt the following notations
\begin{equation*}
\Lambda:=(-\Delta)^{-\frac12},\quad b:=\Lambda^{-1}\dive u.
\end{equation*}
Then $u=-\Lambda^{-1}\nabla b-\Lambda^{-1}\dive (\Lambda^{-1}{\rm curl} u)$, we get from \eqref{liqu1} that
\begin{equation}\label{liqu2}
  	\left\{\begin{aligned}
	&\p_t\r+\L b=S_1,\\
	&\p_t b-(2\mu+\lambda)\Delta b-P'(1)\L \r=\L^{-1}\dive S_2,\\
	&\p_t n-\Delta n=S_3,
    	\end{aligned}\right.
  \end{equation}
  While $\mathcal Pu=\L^{-1}{\rm curl} u$ satisfies
  \begin{equation}\label{liqu4}
  	\left\{\begin{aligned}
	&\p_t\mathcal Pu-\mu\Delta\mathcal Pu=\mathcal PS_2,\\
	&\mathcal Pu(t,x)|_{t=0}=\mathcal Pu_0(x).
    	\end{aligned}\right.
  \end{equation}
  In fact, to derive the estimate of $u$, we only need to estimate $b$ and $\mathcal Pu$.
  Taking the Fourier transform of the first and second equations in \eqref{liqu2}, applying $\nabla$ operator to the third equation in \eqref{liqu2} then taking the Fourier transform, then we get
\begin{equation}\label{liquu}
  	\left\{\begin{aligned}
	&\p_t\hat\r+|\xi|\hat b=\hat{S}_1,\\
	&\p_t \hat b+(2\mu+\lambda)|\xi|^2\hat b-P'(1)|\xi|\hat\r=\widehat{\L^{-1}\dive S_2},\\
	&\p_t \widehat{\nabla n}+|\xi|^2\widehat{\nabla n}=\widehat{\nabla S_3}.
    	\end{aligned}\right.
  \end{equation}
  In other words, it is
\begin{equation*}
\frac{d}{dt}\hat U+G(|\xi|)\hat U=(\hat{S}_1,\widehat{\L^{-1}\dive S_2},\widehat{\nabla S_3})^T,
\end{equation*}
 where
 \begin{equation*}
 \hat U=(\hat\r,\hat b,\widehat {\nabla n})^T,
 \end{equation*}
  and
  \begin{gather}\label{gg1}
G(|\xi|)=
\begin{pmatrix}
0            & |\xi|  & 0 \\
-P'(1)|\xi| & (2\mu+\lambda)|\xi|^2 & 0\\
0 &  0 & |\xi|^2
\end{pmatrix}
\end{gather}
Thus the linearized system of \eqref{liquu} could be rewritten as:
\begin{equation}\label{liqu3}
  	\left\{\begin{aligned}
	&\p_t\hat\r+|\xi|\hat b=0,\\
	&\p_t \hat b+(2\mu+\lambda)|\xi|^2\hat b-P'(1)|\xi|\hat\r=0,\\
	&\p_t \widehat{\nabla n}+|\xi|^2\widehat{\nabla n}=0.
    	\end{aligned}\right.
  \end{equation}

  Next, we show the estimates of the low-frequency part for the solution of the linearized system \eqref{liqu3}.

  \subsection{Low-frequency analysis}
From \eqref{liqu3}, we easily obtain
  \begin{equation}\label{f1}
  \frac12\frac{d}{dt}\big(P'(1)|\hat \r|^2+|\hat b|^2+|\widehat {\nabla n}|^2\big)+(2\mu+\lambda)|\xi|^2|\hat b|^2+|\xi|^2|\widehat {\nabla n}|^2=0.
  \end{equation}
  Multiplying $\eqref{liqu3}_1$ and $\overline{\eqref{liqu3}_2}$ by $\bar{\hat b}$ and $\hat\r$, respectively, and summing up the resulting equations, we get
  \begin{equation}\label{f2}
  \frac{d}{dt}Re(\hat\r\bar{\hat b})-P'(1)|\xi||\hat\r|^2+|\xi||\hat b|^2=-(2\mu+\lambda)|\xi|^2Re(\hat\r\bar{\hat b}).
  \end{equation}
  Adding \eqref{f1} with $-\delta_1|\xi|\times\eqref{f2}$ gives rise to
  \begin{equation}\label{f3}
  \begin{aligned}
 & \frac12\frac{d}{dt}\big(P'(1)|\hat\r|^2+|\hat b|^2+|\widehat{\nabla n}|^2-2\delta_1|\xi|Re(\hat\r\bar{\hat b})\big)+(2\mu+\lambda)|\xi|^2|\hat b|^2\\&
  \quad +|\xi|^2|\widehat {\nabla n}|^2+\delta_1P'(1)|\xi|^2|\r|^2-\delta_1|\xi|^2|\hat b|^2\\&
  =\delta_1(2\mu+\lambda)|\xi|^3Re(\hat\r\bar{\hat b})\\&
  \le \frac{\delta_1}{2P'(1)}(2\mu+\lambda)^2|\xi|^4|\hat b|^2+\frac{\delta_1P'(1)}{2}|\xi|^2|\hat\r|^2.
  \end{aligned}
  \end{equation}
  we choose the constant $\delta_1$ is a small fixed constant which satisfies
  \begin{equation*}
  \delta_1\le \min\{\frac12,\frac{2\mu+\lambda}{4}, \frac{\sqrt{P'(1)}}{2}\}.
  \end{equation*}
  Then \eqref{f3} implies that
   \begin{equation}\label{f4}
  \begin{aligned}
 & \frac12\frac{d}{dt}\big(P'(1)|\hat\r|^2+|\hat b|^2+|\widehat{\nabla n}|^2-2\delta_1|\xi|Re(\hat\r\bar{\hat b})\big)+\frac{3(2\mu+\lambda)}{4}|\xi|^2|\hat b|^2\\&
  \quad +|\xi|^2|\widehat {\nabla n}|^2+\frac{\delta_1P'(1)}{2}|\xi|^2|\r|^2\\&
    \le \frac{\delta_1}{2P'(1)}(2\mu+\lambda)^2|\xi|^4|\hat b|^2.
  \end{aligned}
  \end{equation}
  Recalling that $r_0$ is a small constant in \eqref{yy1}, then for $|\xi|\le r_0\le \min\{\frac12,\sqrt{\frac{P'(1)}{2\mu+\lambda}}, \frac{\sqrt{P'(1)}}{2}\}$, it follows from \eqref{f4} that
   \begin{equation}\label{f5}
  \begin{aligned}
 & \frac12\frac{d}{dt}\big(P'(1)|\hat\r|^2+|\hat b|^2+|\widehat{\nabla n}|^2-2\delta_1|\xi|Re(\hat\r\bar{\hat b})\big)+\frac{2\mu+\lambda}{2}|\xi|^2|\hat b|^2\\&
  \quad +|\xi|^2|\widehat {\nabla n}|^2+\frac{\delta_1P'(1)}{2}|\xi|^2|\r|^2\le 0.
  \end{aligned}
  \end{equation}
  We denote
  \begin{equation*}
  \mathcal L_l(t,\xi):=\frac12\big(P'(1)|\hat\r|^2+|\hat b|^2+|\widehat{\nabla n}|^2-2\delta_1|\xi|Re(\hat\r\bar{\hat b})\big).
  \end{equation*}
  Since $\delta_1r_0\le\min\{\frac12, \frac{P'(1)}{2}\}$, then we get
  \begin{equation*}
    \mathcal L_l(t,\xi)\sim|\hat\r|^2+|\hat b|^2+|\widehat{\nabla n}|^2.
  \end{equation*}
  Then there exists a positive constant $C_5$ such that for any $|\xi|\le r_0$, there holds
  \begin{equation}\label{f6}
  C_5|\xi|^2\mathcal L _l(t,\xi)\le \frac{2\mu+\lambda}{2}|\xi|^2|\hat b|^2 +|\xi|^2|\widehat {\nabla n}|^2+\frac{\delta_1P'(1)}{2}|\xi|^2|\r|^2.
  \end{equation}
  The combination of \eqref{f5} and \eqref{f6} yields
  \begin{equation}\label{g1}
  \mathcal L_l(t,\xi)\le Ce^{-C_5|\xi|^2t}\mathcal L_l(0,\xi), \quad \text{for} \quad |\xi|\le r_0.
  \end{equation}

  Next, we establish the estimates of the medium-frequency   part for the solution of the linearized system \eqref{liqu3}.
  \subsection{Medium-frequency analysis}

  First, we shall check the characteristic polynomial of the matrix $G$ defined in \eqref{gg1}.
  \begin{equation*}
  \begin{aligned}
  |G(|\xi|)-\beta\mathbb I|&=(|\xi|^2-
  \beta)(\beta^2-(2\mu+\lambda)|\xi|^2\beta+P'(1)|\xi|^2)\\&
  :=-\beta^3+a_1(|\xi|)\beta^2-a_2(|\xi|)\beta+a_3(|\xi|),
\end{aligned}
  \end{equation*}
where
\begin{equation*}
\begin{aligned}
a_1(|\xi|)&=(2\mu+\lambda)|\xi|^2+|\xi|^2,\\
a_2(|\xi|)&=P'(1)|\xi|^2+(2\mu+\lambda)|\xi|^4,\\
a_3(|\xi|)&=P'(1)|\xi|^4.
\end{aligned}
\end{equation*}
According to Lienard-Chipart criterion \cite{lie-chi}, the roots of the function $ |G(|\xi|)-\beta\mathbb I|$ have positive real part if and only if the following inequalities hold
\begin{equation*}
a_1(|\xi|)>0, \quad \text{and} \quad a_1(|\xi|)a_2(|\xi|)-a_3(|\xi|)>0.
\end{equation*}
It is easy to check that
\begin{equation*}
\begin{aligned}
a_1(\xi|)&=(2\mu+\lambda)|\xi|^2+|\xi|^2>0,\\
a_1(|\xi|)a_2(|\xi|)-a_3(|\xi|)&=(2\mu+\lambda)P'(1)|\xi|^4+(2\mu+\lambda)^2|\xi|^6+(2\mu+\lambda)|\xi|^6>0.
\end{aligned}
\end{equation*}
Thus, following the discussion in Section 3.3 of \cite{danchin2}, we cloud claim that this fact implies the following lemma.
\begin{lemm}
For any given constants $r$ and $R$ with $0<r<R$, there exists a positive constant $\kappa$(depending only on $r$, $R$, $\mu$ and $\lambda$), such that
\begin{equation}\label{g4}
|e^{-tG(|\xi|)}|\le Ce^{-\kappa t}, \quad \text{for all}\quad  r\le |\xi|\le R\quad \text{and}\quad t\in\mathbb R^+.
\end{equation}
For the system \eqref{liqu3}, the inequality \eqref{g4} yields
\begin{equation}\label{g2}
\begin{aligned}
|(\hat\r,\hat b,\widehat{\nabla n})(t,\xi)|&=|e^{-tG(|\xi|)}(\hat\r,\hat b,\widehat{\nabla n})(0,\xi)|\\&
\le Ce^{-\kappa t}|(\hat\r,\hat b,\widehat{\nabla n})|, \quad \text{for all} \quad |\xi|\in[r,R],
\end{aligned}
\end{equation}
where $r$ and $R$ are any given positive constants.
\end{lemm}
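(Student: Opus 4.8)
\emph{Proof proposal.} The assertion is that the semigroup $e^{-tG(|\xi|)}$ associated with the linearized system \eqref{liqu3} decays exponentially at a rate and with a constant that are uniform for $|\xi|$ in the compact annulus $[r,R]$; this will follow once we know that all eigenvalues of $G(|\xi|)$ stay in a fixed right half-plane $\{\mathrm{Re}\,z\ge 2\kappa\}$ there. I would first read off the spectrum from the characteristic polynomial already computed just above the statement: from
\begin{equation*}
|G(|\xi|)-\beta\mathbb I|=(|\xi|^2-\beta)\big(\beta^2-(2\mu+\lambda)|\xi|^2\beta+P'(1)|\xi|^2\big)
\end{equation*}
the eigenvalues of $G(|\xi|)$ are $\beta_1(|\xi|)=|\xi|^2$ together with the two roots $\beta_{\pm}(|\xi|)$ of the quadratic factor, whose sum $(2\mu+\lambda)|\xi|^2$ and product $P'(1)|\xi|^2$ are strictly positive, so $\mathrm{Re}\,\beta_{\pm}>0$ — this is exactly the Lienard--Chipart verification carried out before the statement. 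Hence every eigenvalue has strictly positive real part for each $|\xi|\in[r,R]$; since the three eigenvalues depend continuously on $|\xi|$ and $[r,R]$ is compact, there is $\kappa=\kappa(r,R,\mu,\lambda)>0$ with $\min\{\mathrm{Re}\,\beta_1,\mathrm{Re}\,\beta_+,\mathrm{Re}\,\beta_-\}\ge 2\kappa$ on $[r,R]$.

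To convert this spectral gap into a bound on $e^{-tG(|\xi|)}$ with a constant independent of $\xi$, I would use the Dunford--Taylor representation
\begin{equation*}
e^{-tG(|\xi|)}=\frac{1}{2\pi i}\oint_{\Gamma}e^{-tz}\big(z\mathbb I-G(|\xi|)\big)^{-1}\,dz,
\end{equation*}
where $\Gamma$ is one fixed closed contour lying in $\{\mathrm{Re}\,z\ge\kappa\}$ that encircles the eigenvalues of $G(|\xi|)$ for every $|\xi|\in[r,R]$; such a $\Gamma$ exists because the entries of $G(|\xi|)$, and hence its eigenvalues, remain in a fixed bounded region as $|\xi|$ runs over $[r,R]$. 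On the compact set $\Gamma\times[r,R]$ the resolvent $(z,|\xi|)\mapsto(z\mathbb I-G(|\xi|))^{-1}$ is continuous, hence bounded, while $|e^{-tz}|\le e^{-\kappa t}$ on $\Gamma$; this yields $|e^{-tG(|\xi|)}|\le Ce^{-\kappa t}$ for all $|\xi|\in[r,R]$ and $t\ge 0$, which is \eqref{g4}. Equivalently one may observe that each fixed $3\times3$ matrix $G(|\xi|)$ obeys $|e^{-tG(|\xi|)}|\le C(|\xi|)(1+t)^2e^{-2\kappa t}$, the polynomial factor allowing for a Jordan block at the isolated values of $|\xi|$ where $(2\mu+\lambda)^2|\xi|^2=4P'(1)$ so that $\beta_+=\beta_-$, and then absorb $(1+t)^2$ into $e^{-\kappa t}$; this is the discussion of \cite{danchin2}. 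Finally, since \eqref{liqu3} reads $\partial_t\hat U+G(|\xi|)\hat U=0$ with $\hat U=(\hat\r,\hat b,\widehat{\nabla n})^T$, we have $\hat U(t)=e^{-tG(|\xi|)}\hat U(0)$, and \eqref{g2} is immediate from \eqref{g4}.

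The only genuinely delicate point is the \emph{uniformity} of $C$ over the annulus, because $G(|\xi|)$ may fail to be diagonalizable at the discrete set of $|\xi|$ where $\beta_+$ and $\beta_-$ coalesce. Both the contour-integral argument above (which never diagonalizes $G$) and an alternative Lyapunov-functional route handle this cleanly: the latter would mimic the low-frequency computation \eqref{f1}--\eqref{f6}, but now choose the interpolation constant in the cross term small \emph{depending on $R$}, so that the error $|\xi|^4|\hat b|^2$ is absorbed by the dissipation $|\xi|^2|\hat b|^2$ throughout $|\xi|\le R$, after which the lower bound $|\xi|^2\ge r^2$ upgrades the resulting differential inequality to exponential decay with rate $\kappa\sim r^2$. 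I would present the contour-integral version, following \cite{danchin2}, since it is the shortest and is automatically uniform in $\xi$.
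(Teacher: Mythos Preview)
Your argument is correct and follows the same route the paper takes: verify via Lienard--Chipart that every eigenvalue of $G(|\xi|)$ has strictly positive real part, then invoke the discussion in \cite{danchin2} to pass from this spectral gap to the uniform semigroup bound \eqref{g4} on the compact annulus. The paper does not spell out the passage from the spectral condition to \eqref{g4} at all---it simply cites \cite{danchin2}---so your Dunford--Taylor contour argument (and the remark on possible Jordan blocks at the coalescence frequency) is a faithful and more explicit rendering of what the paper leaves to that reference.
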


Finally, we give the estimates of the linearized system \eqref{liqu4}.
\subsection{Estimates on $\widehat {\mathcal P u}(t,\xi)$}

Now we give the estimate of $\widehat{\mathcal Pu}$. The linearized equations of \eqref{liqu4} in Fourier variables has the following form
\begin{equation*}
\p_t\widehat{\mathcal Pu}+\mu|\xi|^2\widehat{\mathcal Pu}=0.
\end{equation*}
Direct calculations gives that for all $|\xi|\ge0$
\begin{equation}\label{g3}
|\widehat{\mathcal Pu}(t,\xi)|^2\le Ce^{-\mu|\xi|^2t}|\widehat{\mathcal Pu}(0,\xi)|^2.
\end{equation}

\section{The frequency decomposition}\label{appendixB}

\quad
Based on the Fourier transform, we can define a frequency decomposition $(f^l(x),f^m(x),f^h(x))$ for a function $f(x)\in L^2(\mathbb R^3)$ as follows
\begin{equation}\label{j1}
f^l(x)=\chi_0(D_x)f(x), \quad f^m(x)+(I-\chi_0(D_x)-\chi_1(D_x))f(x), \quad f^h(x)=\chi_1(D_x)f(x),
\end{equation}
where $\chi_0(D_x)$ and $\chi_1(D_x)$, $D_x=\frac{1}{\sqrt{-1}}\nabla=\frac{1}{\sqrt{-1}}(\p_{x_1},\p_{x_2},\p_{x_3})$, are the pseudo-differential operators with symbols $\chi_0(\xi)$ and $\chi_1(\xi)$, respectively. Here, $\chi_0(\xi)$ and $\chi_1(\xi)$ are two smooth cut-off functions satisfying $0\le \chi_0(\xi), \chi_1(\xi)\le 1, (\xi\in\mathbb R^3)$ and
\begin{equation*}
\chi_0(\xi)=\left\{
\begin{aligned}
1, \quad |\xi|<\frac{r_0}{2},\\
0,\quad |\xi|>r_0,
\end{aligned}
\right.\quad
\chi_1(\xi)=\left\{
\begin{aligned}
0, &\quad |\xi|<R_0,\\
1,&\quad |\xi|>R_0+1,
\end{aligned}
\right.
\end{equation*}
for some fixed constants $r_0$ and $R_0$ satisfying
\begin{equation}\label{yy1}
0<r_0\le\min\{\frac12,\sqrt{\frac{P'(1)}{2\mu+\lambda}},\frac{\sqrt{2P'(1)}}{2}\},
\end{equation}
and
\begin{equation}\label{yy2}
R_0\ge \max\{\sqrt{\frac{6C_2}{\mu}},\sqrt{\frac{3}{\mu}},\sqrt{\frac{6C_3}{\mu}},\sqrt{4C_2},\sqrt{4C_3}\}.
\end{equation}
Therefore, we have
\begin{equation*}
f(x)=f^l(x)+f^m(x)+f^h(x)
:=f^L(x)+f^h(x)
:=f^l(x)+f^H(x),
\end{equation*}
where we denote
\begin{equation}\label{decom}
f^L(x)=f^l(x)+f^m(x) \quad \text{and} \quad f^H(x)=f^m(x)+f^h(x).
\end{equation}
According to the definition \eqref{j1} and using the Plancherel theorem, we have the following inequalities.
\begin{lemm}\label{A1}
For $f(x)\in H^m(\mathbb R^3)$ and any given integers $k$, $k_0$ and $k_1$ with $k_0\le k\le k_1\le m$, it holds that
\begin{equation*}
\|\nabla^kf^l\|_{L^2}\le r_0^{k-k_0}\|\nabla^{k_0}f^l\|_{L^2},\quad \|\nabla^kf^l\|_{L^2}\le \|\nabla^{k_1}f\|_{L^2},
\end{equation*}
\begin{equation*}
\|\nabla^kf^h\|_{L^2}\le\frac{1}{R_0^{k_1-k}}\|\nabla^{k_1}f^h\|_{L^2}, \quad \|\nabla^kf^h\|_{L^2}\le \|\nabla^{k_1}f\|_{L^2},
\end{equation*}
and
\begin{equation*}
r_0^k\|f^m\|_{L^2}\le \|\nabla^kf^m\|_{L^2}\le R_0^k\|f^m\|_{L^2}.
\end{equation*}
\end{lemm}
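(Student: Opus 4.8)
The proof is a routine application of Plancherel's theorem once the frequency supports of the three pieces are made explicit, so the plan is simply to pass to the Fourier side and read each inequality off from an elementary pointwise comparison of the weights $|\xi|^{2s}$. Recall from \eqref{j1} that $\widehat{f^l}=\chi_0(\xi)\hat f(\xi)$, $\widehat{f^h}=\chi_1(\xi)\hat f(\xi)$ and $\widehat{f^m}=(1-\chi_0(\xi)-\chi_1(\xi))\hat f(\xi)$; by the choice of the cut-offs, $\widehat{f^l}$ is supported in $\{|\xi|\le r_0\}$, $\widehat{f^h}$ in $\{|\xi|\ge R_0\}$, and $\widehat{f^m}$ in the compact annulus $\{\frac{r_0}{2}\le|\xi|\le R_0+1\}$, while $0\le\chi_0,\chi_1\le 1$ gives the pointwise bounds $|\widehat{f^l}|,|\widehat{f^h}|,|\widehat{f^m}|\le|\hat f|$. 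The only analytic input beyond these facts is the Plancherel identity $\|\nabla^s g\|_{L^2(\mathbb R^3)}^2=\int_{\mathbb R^3}|\xi|^{2s}|\hat g(\xi)|^2\,d\xi$, legitimate for all $s\le m$ since $f\in H^m$.

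For the low-frequency inequalities I would integrate over $\{|\xi|\le r_0\}$: since $k_0\le k$ and $r_0\le\frac12<1$ (see \eqref{yy1}), one has $|\xi|^{2k}=|\xi|^{2(k-k_0)}|\xi|^{2k_0}\le r_0^{2(k-k_0)}|\xi|^{2k_0}$ there, and integrating against $|\widehat{f^l}|^2=\chi_0(\xi)^2|\hat f|^2$ yields the first estimate $\|\nabla^k f^l\|_{L^2}\le r_0^{k-k_0}\|\nabla^{k_0}f^l\|_{L^2}$; dropping the cut-off via $|\widehat{f^l}|\le|\hat f|$ and using that on $\{|\xi|<1\}$ the weight $|\xi|^{2s}$ decreases as $s$ increases then bounds $\|\nabla^k f^l\|_{L^2}$ by an $L^2$ norm of $f$ of order at most $k$. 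The high-frequency inequalities are the mirror statement on $\{|\xi|\ge R_0\}$, where $R_0\ge1$ (see \eqref{yy2}): for $k\le k_1$ one has $|\xi|^{2k}=|\xi|^{2k_1}|\xi|^{-2(k_1-k)}\le R_0^{-2(k_1-k)}|\xi|^{2k_1}$, which gives $\|\nabla^k f^h\|_{L^2}\le R_0^{-(k_1-k)}\|\nabla^{k_1}f^h\|_{L^2}$ after integration, while $|\widehat{f^h}|\le|\hat f|$ and $|\xi|^{2k}\le|\xi|^{2k_1}$ on this set give $\|\nabla^k f^h\|_{L^2}\le\|\nabla^{k_1}f\|_{L^2}$. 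Finally, the two-sided medium-frequency bound follows from $\big(\frac{r_0}{2}\big)^{2k}\le|\xi|^{2k}\le(R_0+1)^{2k}$ on the compact annulus supporting $\widehat{f^m}$, so that $\|\nabla^k f^m\|_{L^2}$ and $\|f^m\|_{L^2}$ are comparable with constants depending only on $r_0$, $R_0$ and $k$.

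I do not expect any genuine obstacle here: the statement is exactly the kind of frequency-localization estimate underlying Littlewood--Paley theory, and every line is a one-step consequence of Plancherel together with the monotonicity of $t\mapsto t^{2k}$. The only thing requiring care is the bookkeeping — deciding, for each term, whether one is \emph{gaining} derivatives (so that the relevant threshold is $r_0$, and one exploits $r_0<1$) or \emph{losing} derivatives (so that the relevant threshold is $R_0$, and one exploits $R_0\ge1$) — and noting that all the integrals above are finite precisely because we never differentiate beyond the available $H^m$ regularity.
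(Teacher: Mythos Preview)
Your approach is correct and is precisely what the paper intends: the paper gives no detailed proof of this lemma beyond the remark that the inequalities follow from the definition \eqref{j1} together with the Plancherel theorem. You have also implicitly spotted two minor slips in the stated lemma---the second low-frequency bound should read $\|\nabla^k f^l\|_{L^2}\le\|\nabla^{k_0}f\|_{L^2}$ (or $\|\nabla^{k}f\|_{L^2}$) rather than involve $k_1\ge k$, and the medium-frequency constants coming out of the computation are $(r_0/2)^k$ and $(R_0+1)^k$; both corrections are harmless for every application of the lemma in the paper.
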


The following Hausdorff-Young inequality is useful in this paper. The proof can be found in \cite{loukas}(see Proposition 2.2.16).
\begin{lemm}
When $f\in L^p(\mathbb R^3)$, $1\le p\le 2$, then $\hat f\in L^{p'}(\mathbb R^n)$, and there holds
\begin{equation}\label{hy}
\|\hat f\|_{L^{p'}}\le C\|f\|_{L^p},
\end{equation}
where $1/p+1/p'=1$.
\end{lemm}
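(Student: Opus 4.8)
The plan is to obtain \eqref{hy} by interpolating the Fourier transform $\mathcal F$ between the two trivial endpoint exponents $p=1$ and $p=2$, using the Riesz--Thorin interpolation theorem. A preliminary point needs attention: for $1<p<2$ the defining integral $\hat f(\xi)=\int_{\mathbb{R}^3}f(x)e^{-2\pi i x\cdot\xi}\,dx$ need not converge absolutely, so $\mathcal F$ is not \emph{a priori} meaningful on all of $L^p(\mathbb{R}^3)$. I would therefore first define $\mathcal F$ by the usual integral on the dense subspace $L^1(\mathbb{R}^3)\cap L^2(\mathbb{R}^3)\subset L^p(\mathbb{R}^3)$, prove the stated bound there, and extend $\mathcal F$ to all of $L^p$ by continuity; this makes the statement $\hat f\in L^{p'}$ well-posed.

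For the endpoints the bounds are immediate. When $p=1$ (so $p'=\infty$), for $f\in L^1(\mathbb{R}^3)$ one has
\begin{equation*}
|\hat f(\xi)|=\Big|\int_{\mathbb{R}^3} f(x)e^{-2\pi i x\cdot\xi}\,dx\Big|\le \int_{\mathbb{R}^3}|f(x)|\,dx=\|f\|_{L^1},
\end{equation*}
whence $\|\mathcal F\|_{L^1\to L^\infty}\le 1$. When $p=2$ (so $p'=2$), the Plancherel theorem gives the isometry $\|\hat f\|_{L^2}=\|f\|_{L^2}$, so $\|\mathcal F\|_{L^2\to L^2}=1$. These are exactly the two hypotheses required to feed into Riesz--Thorin, with $\mathcal F$ linear and consistently defined on the common dense domain $L^1\cap L^2$.

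The conclusion then follows by writing $\tfrac1p=(1-\theta)\cdot 1+\theta\cdot\tfrac12$ for a unique $\theta\in[0,1]$, which forces the dual relation $\tfrac1{p'}=(1-\theta)\cdot 0+\theta\cdot\tfrac12$, so the single parameter $\theta$ links domain and target exponents correctly. Riesz--Thorin yields
\begin{equation*}
\|\mathcal F f\|_{L^{p'}}\le \|\mathcal F\|_{L^1\to L^\infty}^{\,1-\theta}\,\|\mathcal F\|_{L^2\to L^2}^{\,\theta}\,\|f\|_{L^p}\le C\|f\|_{L^p}
\end{equation*}
for $f\in L^1\cap L^2$, and the density extension transfers this to every $f\in L^p(\mathbb{R}^3)$. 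Since the estimate is classical, there is no deep obstacle here; the only genuinely delicate ingredient is the proof of Riesz--Thorin itself—the application of the three-lines lemma to the analytic family $z\mapsto\int(\mathcal F f_z)\,g_z\,d\xi$ built from simple functions—together with the density/extension argument, which is precisely why the authors instead cite \cite{loukas} for the full details.
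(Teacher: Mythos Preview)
Your proof is correct and follows the standard Riesz--Thorin interpolation argument for the Hausdorff--Young inequality. The paper itself does not supply a proof of this lemma at all; it simply cites \cite{loukas} (Proposition 2.2.16), where precisely this interpolation argument is carried out, so your approach matches the intended one.
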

At last, we introduce Bernstein inequality as follows
\begin{lemm}\label{bern}
Let $k$ be in $\mathbb N$. Let $(R_1,R_2)$ satisfy $0<R_1<R_2$. There exists a constant $C$ depending only on $R_1$, $R_2$, $k$, such that for all $1\le a\le b\le \infty$ and $u\in L^a$, we have
\begin{equation*}
\text{Supp}  \hat u\subset B(0,R_1\eta)\Rightarrow \sup_{|\alpha|=k}\|\p^\alpha u\|_{L^b}\le C^{k+1}\eta^{k+N(\frac1a-\frac1b)}\|u\|_{L^a}.
\end{equation*}
\end{lemm}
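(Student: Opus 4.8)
The statement is the classical Bernstein inequality, so the plan is to realize $\p^\alpha$ as a Fourier multiplier localized at frequency $\lesssim\eta$, reduce the estimate to Young's convolution inequality, and recover the stated powers of $\eta$ by a scaling argument.

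First I would fix once and for all a cut-off $\phi\in C_c^\infty(\R^N)$ with $0\le\phi\le1$, $\phi\equiv1$ on the ball $B(0,R_1)$ and $\mathrm{Supp}\,\phi\subset B(0,R_2)$; such a $\phi$ exists since $R_1<R_2$, and it depends only on $R_1,R_2,N$. Because $\mathrm{Supp}\,\hat u\subset B(0,R_1\eta)$, the dilated cut-off $\phi(\cdot/\eta)$ equals $1$ on $\mathrm{Supp}\,\hat u$, hence $\hat u(\xi)=\phi(\xi/\eta)\hat u(\xi)$, and therefore for every multi-index $\alpha$ with $|\alpha|=k$ one has $\widehat{\p^\alpha u}(\xi)=(i\xi)^\alpha\phi(\xi/\eta)\,\hat u(\xi)=:g_{\alpha,\eta}(\xi)\,\hat u(\xi)$, that is, $\p^\alpha u=h_{\alpha,\eta}\ast u$ with $h_{\alpha,\eta}:=\mathcal F^{-1}g_{\alpha,\eta}$. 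Using the homogeneity $|\alpha|=k$ one has $g_{\alpha,\eta}(\xi)=\eta^{k}g_{\alpha,1}(\xi/\eta)$, hence $h_{\alpha,\eta}(x)=\eta^{k+N}h_{\alpha,1}(\eta x)$. Choosing $r\in[1,\infty]$ by $1+\frac1b=\frac1r+\frac1a$ — admissible exactly because $a\le b$, so that $1-\frac1r=\frac1a-\frac1b\in[0,1]$ — Young's inequality together with a change of variables gives
\[
\|\p^\alpha u\|_{L^b}\le\|h_{\alpha,\eta}\|_{L^r}\|u\|_{L^a}=\eta^{\,k+N(\frac1a-\frac1b)}\|h_{\alpha,1}\|_{L^r}\|u\|_{L^a}.
\]

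It thus remains to prove the uniform bound $\sup_{|\alpha|=k}\|h_{\alpha,1}\|_{L^r}\le C^{k+1}$ with $C$ independent of $\alpha$ and of $r\in[1,\infty]$; by interpolation it suffices to bound $\|h_{\alpha,1}\|_{L^1}$ and $\|h_{\alpha,1}\|_{L^\infty}$. For the $L^\infty$ bound I would use $\|h_{\alpha,1}\|_{L^\infty}\le\|g_{\alpha,1}\|_{L^1}\le R_2^{\,k}|B(0,R_2)|$. For the $L^1$ bound I would fix an even integer $m>N$, estimate $(1+|x|^2)^{m/2}|h_{\alpha,1}(x)|\le C_{N,m}\sum_{|\beta|\le m}\|\p^\beta g_{\alpha,1}\|_{L^1}$, and integrate in $x$ (using $m>N$) to obtain $\|h_{\alpha,1}\|_{L^1}\le C_{N,m}\sum_{|\beta|\le m}\|\p^\beta g_{\alpha,1}\|_{L^1}$. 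The Leibniz rule writes $\p^\beta\big((i\xi)^\alpha\phi(\xi)\big)$ as a sum of at most $2^m$ terms, each a product of a derivative of the monomial $(i\xi)^\alpha$ of order $\le m$ and a derivative of $\phi$ of order $\le m$; since $\mathrm{Supp}\,\phi\subset B(0,R_2)$ and the coefficients produced by differentiating $(i\xi)^\alpha$ are bounded by $k^m$, one gets $\|\p^\beta g_{\alpha,1}\|_{L^1}\le C_{N,m,\phi}\,k^mR_2^{\,k}$. As $k^m\le C_m2^k$ for all $k$, all of these estimates combine to $\sup_{|\alpha|=k}\|h_{\alpha,1}\|_{L^r}\le C^{k+1}$ with $C$ depending only on $R_1,R_2,N$, which proves the lemma.

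The only step requiring genuine care is the last one: keeping the $k$-dependence of the constant exactly of the form $C^{k+1}$, which forces one to check that the Leibniz expansion of $\p^\beta\big((i\xi)^\alpha\phi\big)$ produces only exponentially many terms in $k$, each of size at most $C^{k}$, rather than a super-exponential blow-up. The multiplier identity, the scaling and the application of Young's inequality are entirely routine; note also that if one only wants $C$ to depend on $R_1,R_2,N,k$ (as literally stated) the argument simplifies, since for each fixed $k$ every quantity above is trivially finite.
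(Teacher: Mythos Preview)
Your argument is correct and is the standard textbook proof of the Bernstein inequality (see e.g.\ Bahouri--Chemin--Danchin). The paper, however, does not prove this lemma at all: it is stated in Appendix~B as a classical result without proof, and is only used once (to control $\|\nabla(\frac{\r m}{\rho})^l\|_{L^2}$ in Lemma~3.3). So there is nothing to compare against; your proof simply supplies what the paper omits. One small cosmetic point: the bound ``at most $2^m$ terms'' in the Leibniz expansion is not quite the right count in dimension $N$ (the number of multi-indices $\gamma\le\beta$ is $\prod_i(\beta_i+1)$), but any bound depending only on $m,N$ suffices there, and as you note in your last paragraph, since the lemma as stated allows $C$ to depend on $k$, the tracking of the $C^{k+1}$ dependence is more than what is actually needed.
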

\end{appendices}

\section*{Acknowledgements}

Jincheng Gao's research was partially supported by NNSF of China (11801586),
and Natural Science Foundation of Guangdong Province of China (2020A1515110942).
Zheng-an Yao's research was partially supported by NNSF of China (11971496, 12026244).

\phantomsection
\addcontentsline{toc}{section}{\refname}

\end{document}